\documentclass[12pt]{article}
\usepackage{color}
\usepackage{amsmath}

\usepackage{extarrows}
\usepackage{geometry}
\usepackage{authblk} 
\usepackage{hyperref} 
\usepackage{color}
\usepackage{ntheorem}
\usepackage{romannum}
\usepackage[mathscr]{euscript}
\usepackage[utf8]{inputenc}
\usepackage[bottom]{footmisc}
\oddsidemargin -0.04cm \evensidemargin -0.04cm \textheight 23.5cm
\textwidth 17cm \topmargin 0.45cm \headheight 0 cm \headsep 0 cm
\parskip 0.1 cm

\def\0{\emptyset}

\newtheorem{theorem}{Theorem}[section]
\newtheorem{definition}[theorem]{Definition}
\newtheorem{lemma}[theorem]{Lemma}
\newtheorem{claim}[theorem]{Claim}

\newtheorem{conjecture}[theorem]{Conjecture}
\newtheorem{cor}[theorem]{Corollary}
\newtheorem{proposition}[theorem]{Proposition}

\newenvironment{proof}{{\noindent\it Proof.}}{\hfill $\square$\par}



\usepackage{graphicx}



\usepackage{enumerate}
\usepackage{enumitem}

\usepackage{
	amsmath,			
	amssymb,			
	enumerate,		    
	graphicx,			
	lastpage,			
	multicol,			
	multirow,			
	pifont,			    
}

\usepackage[numbers]{natbib}

\newcommand{\lf}{\left\lfloor}
\newcommand{\rf}{\right\rfloor}

\newcounter{cases}
\newcounter{subcases}[cases]
\newenvironment{mycase}
{
    \setcounter{cases}{0}
    \setcounter{subcases}{0}
    \newcommand{\case}
    {
        \par\indent\stepcounter{cases}\textbf{Case \thecases.}
    }
    
}
{
    \par
}
\renewcommand*\thecases{\arabic{cases}}



\begin{document}

\pagenumbering{arabic}

\title{Counting induced subgraphs with given intersection sizes}


\author{\small\bf Haixiang Zhang\thanks{email: zhang-hx22@mails.tsinghua.edu.cn}}
\author{\small\bf Yichen Wang\thanks{Corresponding author: email: wangyich22@mails.tsinghua.edu.cn}}
\author{\small\bf Xiamiao Zhao\thanks{email: zxm23@mails.tsinghua.edu.cn}}
\author{\small\bf Mei Lu\thanks{email: lumei@tsinghua.edu.cn}}

\affil{\small Department of Mathematical Sciences, Tsinghua University, Beijing 100084, China}

\date{}

\maketitle\baselineskip 16.3pt

\begin{abstract}
Let $F$ be a graph of order $r$. In this paper, we study the maximum number of induced copies of  $F$ with restricted intersections, which highlights the motivation from extremal set theory. Let $L=\{\ell_1,\dots,\ell_s\}\subseteq[0,r-1]$ be an integer set with $s\not\in\{1,r\}$. Let $\Psi_r(n,F,L)$ be the maximum number of induced copies of $F$ in an $n$-vertex graph, where  the induced copies of $F$ are $L$-intersecting as a family of $r$-subsets, i.e., for any two  induced copies of $F$, the size of their intersection is in $L$. Helliar and Liu initiated a study of the function $\Psi_r(n,K_r,L)$. Very recently, Zhao and Zhang improved their result and showed that $\Psi_r(n,K_r,L)=\Theta_{r,L}(n^{s})$ if and only if $\ell_1,\dots,\ell_s,r$ form an arithmetic progression. In this paper, we show that  $\Psi_r(n,F,L)=o_{r,L}(n^{s})$ when $\ell_1,\dots,\ell_s,r$ do not form an arithmetic progression. We  study the asymptotical result of $\Psi_r(n,C_r,L)$, and determined the asymptotically optimal result when $\ell_1,\dots,\ell_s,r$ form an arithmetic progression and take certain values. We also study the generalized Tur\'an problem, determining the maximum number of $H$, where the copies of $H$ are $L$-intersecting as a family of $r$-subsets. The entropy method is used to prove our results.
\end{abstract}


{\bf Keywords}: Intersecting family; inducibility; Tur\'an problem; entropy method; cycle
\vskip.3cm

\section{Introduction}

In this paper, we use capital letters to represent graphs, such as $G, F$, and we use cursive letters to represent hypergraphs, such as $\mathscr{H}$. We use $V(G)$~(resp $V(\mathscr{H})$) to represent the vertex set of $G$~(resp $\mathscr{H}$), and $E(G)$~(resp $E(\mathscr{H})$) to represent the edge set of $G$~(resp $\mathscr{H}$). We use $v(\mathscr{H})$ and $|\mathscr{H}|$ to represent the order and size of $\mathscr{H}$. For convenience, we use $A \in \mathscr{H}$ to represent $A \in E(\mathscr{H})$.
For a vertex subset $A \subseteq V(G)$, we use $G[A]$ to denote the subgraph of $G$ induced by $A$.
For a graph $G$ (resp  hypergraph $\mathscr{H}$) and $v\in V(G)$ (resp $v\in V(\mathscr{H})$), let $d_G(v)$ (resp $d_\mathscr{H}(v)$) be the number of edges (resp hyperedges) contained $v$.
When there is no ambiguity, we may ignore the subscript $G$ or $\mathscr{H}$.
We write $G_1 \cong G_2$, if there is an bijection $\phi:V(G_1) \rightarrow V(G_2)$, such that for any $u,v\in V(G_1)$, $uv \in E(G_1)$ if and only if $\phi(u)\phi(v) \in E(G_2)$. 

The general Tur\'an problem is to determine the maximum number of copies of $F$ on an $n$-vertex $\mathscr{G}$-free graph, where $\mathscr{G}$ is a family of graphs. When $F$ is an edge, it is the celebrated Tur\'an problem \cite{turan1941extremal}.

Let $G$ be a graph on $n$ vertices and $F$ be a fixed graph. The number of induced copies of $F$ in $G$ is the number of subsets $S$ of $V(G)$ such that $G[S]\cong F$, which is denoted as $N_{ind}(G,F)$. The maximum number of induced copies of $F$ on $n$-vertex graph is denoted as $i(n,F):=\max\{ N_{ind}(G,F):~v(G)=n\}.$ And the {\em inducibility} of $F$ is denoted as $ind(F):=\lim_{n\to \infty}\frac{i(n,F)}{ \binom{n}{v(F)}}.$

The first study about inducibility is back to Pippenger and Golumbic \cite{PIPPENGER1975189} in 1975, who proved the lower bound of $ind(F)$ for a $k$-vertex graph $F$ is at least $\frac{k!}{k^k-k}$, and they conjecture that when $F$ is a cycle on $k$ vertices, this bound is tight, i.e. $ind(C_k)=\frac{k!}{k^k-k}$ when $k\geq 5$.
They also gave an upper bound about $ind(C_k)$, which is $(2e+o(1))\frac{k!}{k^k}$. After a series of studies \cite{hefetz2018inducibility}, the optimal upper bound is $(2+o(1))\frac{k!}{k^k}$ given by Kr\'al', Norin and Volce \cite{norin2019bound} by probability method.

Recently, thanks to the celebrated flag algebra method provided by Razborov \cite{razborov2007flag}, there are some important results about the inducibility of small graphs. Balogh, J{\'o}zsef, Hu and Lidick\'y \cite{balogh2016maximum} gave the inducibility of $C_5$. Hirst and James \cite{hirst2014inducibility} calculate some inducibility of $4$-vertex graphs.

This paper combines the general Tur\'an problem with inducibility and extremal set theory. Let us begin with some basic definitions. Denote  $[n]=\{1,2,\dots,n\}$. For any two integers $a<b$, let $[a,b]=\{a,a+1,\dots,b\}$. Write $\binom{X}{r}$  the set of all $r$-subsets of $X$. 

A hypergraph $\mathscr{H}$ is {\em $r$-uniform} if $E(\mathscr{H}) \subseteq \binom{V(\mathscr{H})}{r}$.
Let $L$ be a set of non-negative integers.
An $r$-uniform hypergraph (also call $r$-graph) $\mathscr{H}$ is {\em $L$-intersecting} if the intersection size of any two hyperedges belongs to $L$.
When $L=[t,r-1]$ for some $0\leq t \leq r-1$, an $L$-intersecting $r$-graph is simply said to be $t$-intersecting. Let $\Phi_r(n, L)$ denote the maximum possible size of an $n$-vertex $L$-intersecting $r$-graph, that is,
\[ \Phi_r(n,L) = \max\{|\mathscr{H}| : \text{$\mathscr{H}$ is an $L$-intersecting $r$-graph with $n$ vertices}\},\]
which has been extensively studied in extremal set theory \cite{frankl2016invitation}. The following celebrated Deza-Erd\H{o}s-Frankl \cite{deza1978intersection} theorem provides a general upper bound on $\Phi_r(n,L).$
\begin{theorem}[Deza-Erd\H{o}s-Frankl \cite{deza1978intersection}.]\label{thm: deza-E-F}
    Let $r\geq 3$, $n\geq 2^rr^3$ and $L\subseteq [0,r-1]$. If $\mathscr{H}$ is an $n$-vertex $L$-intersecting $r$-graph, then $|\mathscr{H}|\leq \prod_{\ell\in L}\frac{n-\ell}{r-\ell}\leq n^{|L|}$. Moreover, there exists a constant $C=C(r,L)$ such that every $n$-vertex $L$-intersecting $r$-graph $\mathscr{H}$ with $|\mathscr{H}|\geq Cn^{|L|-1}$ satisfies $\left|\bigcap_{A\in \mathscr{H}} A \right| \geq \min L$.
\end{theorem}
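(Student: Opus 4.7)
The plan is to handle the two assertions with different tools. For the numerical bound $|\mathscr{H}|\leq\prod_{\ell\in L}\frac{n-\ell}{r-\ell}$, I would rely on the polynomial method of Ray--Chaudhuri--Wilson: to each $A\in\mathscr{H}$ assign the multilinear polynomial $p_A(x)=\prod_{\ell\in L}\bigl(\langle x,\chi_A\rangle-\ell\bigr)$ in $n$ variables, where $\chi_A$ is the characteristic vector. For distinct $A,B\in\mathscr{H}$ one has $p_A(\chi_B)=0$ since $|A\cap B|\in L$, while $p_A(\chi_A)=\prod_{\ell\in L}(r-\ell)\neq 0$; hence the polynomials $\{p_A\}$ are linearly independent in the space of multilinear polynomials of degree at most $|L|$, whose dimension is $\binom{n}{\leq|L|}$. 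This already yields $|\mathscr{H}|=O(n^{|L|})$. Sharpening to the explicit product $\prod_{\ell}\frac{n-\ell}{r-\ell}$ is obtained by an induction on $|L|$ via a link-family argument: fix $A_0\in\mathscr{H}$, stratify the remaining edges by their intersection size with $A_0$, and apply the inductive hypothesis to each class after a shift.

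For the structural conclusion, I would apply the Erd\H{o}s--Ko--Rado sunflower lemma: an $r$-uniform family of size exceeding $r!(k-1)^r$ contains a $k$-sunflower. Since $|\mathscr{H}|\geq Cn^{|L|-1}$ with $n\geq 2^rr^3$, choosing $C=C(r,L)$ large allows me to extract a sunflower $\{A_1,\dots,A_k\}\subseteq\mathscr{H}$ with kernel $Y$ and as many petals as I wish, say $k\geq 2r+|L|$. The $L$-intersecting hypothesis forces $|Y|=|A_i\cap A_j|\in L$, so $|Y|\geq\min L$. For any other $B\in\mathscr{H}$, pairwise disjointness of the petals gives $\sum_i|B\cap(A_i\setminus Y)|\leq r$, so $|B\cap A_i|=|B\cap Y|$ for all but at most $r$ indices, forcing $|B\cap Y|\in L$. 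If $|Y|=\min L$, this yields $Y\subseteq B$ immediately: otherwise $|B\cap Y|<\min L$ would fall outside $L$. Consequently $Y\subseteq\bigcap_{A\in\mathscr{H}}A$ and $|\bigcap_{A\in\mathscr{H}}A|\geq|Y|\geq\min L$, as required.

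The decisive obstacle is arranging $|Y|=\min L$, or equivalently ruling out the case $|Y|>\min L$, since not every sunflower extracted by the lemma has kernel of smallest possible size. My plan is to choose, among all sunflowers inside $\mathscr{H}$ with at least $2r+|L|$ petals, one whose kernel is as small as possible. If this minimum-kernel sunflower still has $|Y|>\min L$, I would consider the exceptional subfamily of edges $B$ with $Y\not\subseteq B$, partition them by their trace $Y_0:=B\cap Y\subsetneq Y$ (with $|Y_0|\in L$), and apply the size bound from the first half of the proof to each trace class after contracting $Y_0$ and working in the link on $n-|Y_0|$ vertices. Since the effective intersection set $\{m-|Y_0|:m\in L,\,m\geq|Y_0|\}$ has cardinality at most $|L|$, combined with the forbidden vertices $Y\setminus Y_0$ this yields an $O(n^{|L|-1})$ bound on each trace class, and summing over the at most $2^r$ possible $Y_0$ contradicts $|\mathscr{H}|\geq Cn^{|L|-1}$ once $C$ dominates all implicit constants. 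Calibrating the sunflower parameter $k$, the constant $C$, and the link-family constants uniformly is the main technical bookkeeping.
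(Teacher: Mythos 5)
The paper does not prove this statement; it is quoted verbatim from Deza--Erd\H{o}s--Frankl, so the only question is whether your argument is sound, and it has two genuine gaps. First, the quantitative bound you actually establish is the Ray--Chaudhuri--Wilson bound $|\mathscr{H}|\le\binom{n}{\le |L|}$, which is strictly weaker than the claimed product $\prod_{\ell\in L}\frac{n-\ell}{r-\ell}$ whenever $r$ is large compared with $|L|$ (for $L=\{0,1\}$ and $r=10$ the theorem asserts roughly $\frac{n^2}{90}$ while your bound is $\binom{n}{2}$); this sharper product is precisely what the present paper uses. Your proposed ``sharpening'' --- fix $A_0$, stratify the other edges by $|B\cap A_0|$, and apply induction on $|L|$ after a shift --- does not close: each stratum is still $L$-intersecting (fixing the intersection size with one distinguished edge does not restrict the pairwise intersection sizes within the stratum), so there is no family with a smaller intersection-size set to which the inductive hypothesis applies. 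The genuine proof of the product bound (and of the stability statement) in the literature intertwines the two assertions and is substantially more delicate than this one-line reduction.

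Second, in the structural part the case $|Y|>\min L$ is not handled. Your plan is to bound the edges $B$ with $Y\not\subseteq B$ by $O(n^{|L|-1})$ via their trace $Y_0=B\cap Y$, but for the trace class with $|Y_0|=\min L$ the link family (delete $Y_0$, forbid $Y\setminus Y_0$) is $\{m-|Y_0|:m\in L\}$-intersecting, a set that still has $|L|$ elements, so the tools you invoke give only $O(n^{|L|})$; forbidding the at most $r$ vertices of $Y\setminus Y_0$ only replaces $n$ by $n-O(1)$ and cannot lower the exponent, and minimality of the kernel among large sunflowers does not rule this class out either (a sunflower inside this class merely has core strictly containing $Y_0$, which is consistent with minimality). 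In addition, the intended contradiction with $|\mathscr{H}|\ge Cn^{|L|-1}$ silently requires that the non-exceptional edges (those containing $Y$) also number $O(n^{|L|-1})$; this is true and easy (their pairwise intersections lie in $\{m\in L:m\ge |Y|\}$, a set of size at most $|L|-1$ since $|Y|>\min L$), but it is not stated, and without it the smallness of the exceptional family alone yields no conclusion about $\bigcap_{A\in\mathscr{H}}A$, since the structural claim must cover every edge, not just most of them. So the argument is complete only in the lucky case where the extracted sunflower already has a core of size exactly $\min L$; the remaining case, which is the heart of the theorem, is open in your write-up.
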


 For $r\geq 3$, given  $G$ and $F$ with $v(F)=r$, define their associated $r$-graph to be $\mathscr{H}^r_{G,F}:=\{S\in \binom{V(G)}{r}: G[S]\cong F\}$. For an integer set $L\subseteq [0,r-1]$, a graph $G$ is called \textbf{$(F, L)$-intersecting} if its associated $r$-graph $\mathscr{H}^r_{G,F}$ is $L$-intersecting, in another word, the intersecting size of any two induced copies of $F$ in $G$ is in $L$. For a fixed set of vertices $W \subseteq V(G)$ with size $\ell$, we define the associated $(r-\ell)$-graph to be $\mathscr{H}^{r-\ell}_{G,F,W}:=\{S\in \binom{V(G)\setminus W}{r -\ell}:~G[S\cup W]\cong F\}$. Note that if $\mathscr{H}^r_{G,F}$ is $L$-intersecting, then $\mathscr{H}^{r-\ell}_{G,F,W}$ is $L'$-intersecting where $L'=\{x-\ell \mid x \in L, x \ge \ell \}$. Moreover, if $W \subseteq \bigcap_{A \in \mathscr{H}_{G,F}^{r}}$, then $|\mathscr{H}^{r-\ell}_{G,F,W}| = |\mathscr{H}^r_{G,F}|$.

Then we define the following function:
$$\Psi_r(n,F,L):=\max\{N_{ind}(G,F)~|~  \text{$G$ is an $n$-vertex $(F,L)$-intersecting graph}\}.$$
Meanwhile, we define a similar extremal function when all the induced copies have a common intersecting set of vertices with size at least $\ell$, that is,
\begin{equation*}
\begin{aligned}
    &\Psi_r(n,F,L,\ell ) :=  \max\left\{N_{ind}(G,F)~|~\text{$G$ is $(F,L)$-intersecting,~} v(G)=n,\left|\bigcap_{A \in E(\mathscr{H}_{G,F}^{r})}A \right| \ge \ell  \right\}.
\end{aligned}
\end{equation*}

Obviously from the definition, we have
\[
\Psi_r(n,F,L,\ell ) \le \Psi_r(n,F,L) \le \Phi_r(n,L).
\]
Recently, Helliar and Liu \cite{helliar2024generalized} investigated the extremal function $\Psi_r(n,K_r,L)$, which can be rephrased as the generalized Tur\'an number that counts the maximum number of $r$-cliques in an $n$-vertex graph without two $r$-cliques whose intersection has size in $[0,r-1]\setminus L$.
Thus, Theorem \ref{thm: deza-E-F} implies that when $n$ is large enough, we have
$$\Psi_r(n,F,L)\leq \prod_{\ell\in L}\frac{n-\ell}{r-\ell}=O(n^{|L|}),$$
for every $F$ with $v(F)=r\geq 3$.
In particular,  the  Ruzsa-Szemer\'edi  theorem \cite{ruzsatriple} is equivalent to the statement that $n^{2-o(1)}<\Psi_3(n,K_3,\{0,1\})=o(n^2)$ if $L=[0,t]$ with $t\in [r-2]$. Recently this result was extended to any fixed $t\in[r-2]$ by Gowers and Janzer \cite{gowers2021generalizations}, that is
\begin{equation}\label{eq: general of (6,3)}
    n^{2-o(1)}<\Psi_r(n,K_r,[0,t])=o(n^{t+1}),
\end{equation}
when $n$ is large enough. Some special case when $L=[0,t-1]\setminus\{s\}$, the values of $\Psi_r(n,K_r,L)$ have been considered in \cite{liu2021generalized,sos1976remarks}. Recently, Helliar and Liu \cite{helliar2024generalized} improved the upper bound of $\Psi_r(n,K_r,L)$ by a constant factor when $2\leq |L|\leq r-1$.
\begin{theorem}[Helliar-Liu \cite{helliar2024generalized}]\label{thm: Helliar with procession}
    Let $r\geq 3$ and let $L\subseteq[0,r-1]$ be a subset with $2\leq |L|\leq r-1$. Then for $n\geq (2r)^{r+1}$,
    $$\Psi_r(n,K_r,L)\leq \left(1-\frac{1}{3r}\right)\prod_{\ell\in L}\frac{n-\ell}{r-\ell}.$$
\end{theorem}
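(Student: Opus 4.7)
The plan is a contradiction argument driven by the \emph{moreover} clause of Theorem \ref{thm: deza-E-F}. Let $G$ be an $n$-vertex $(K_r,L)$-intersecting graph with $n\geq(2r)^{r+1}$, set $\mathscr{H}:=\mathscr{H}^r_{G,K_r}$, $\ell_1:=\min L$, and $f_L(n):=\prod_{\ell\in L}\frac{n-\ell}{r-\ell}$, and suppose for contradiction that $|\mathscr{H}|>(1-\tfrac{1}{3r})f_L(n)$. Since this lower bound is $\Theta_{r,L}(n^{|L|})$, it dominates the $C(r,L)\,n^{|L|-1}$ threshold of Theorem \ref{thm: deza-E-F}, so there is a common intersection $W:=\bigcap_{A\in\mathscr{H}}A$ with $|W|\geq\ell_1$. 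I split on whether $|W|>\ell_1$ or $|W|=\ell_1$.

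If $|W|>\ell_1$, I peel $W$ off via the identity $|\mathscr{H}^{r-|W|}_{G,K_r,W}|=|\mathscr{H}|$ noted in the introduction. The peeled hypergraph lives on $n-|W|$ vertices and is $L''$-intersecting for $L''=\{\ell-|W|:\ell\in L,\,\ell\geq|W|\}$, so re-applying Theorem \ref{thm: deza-E-F} gives
\[
|\mathscr{H}|\leq\prod_{\ell\in L,\,\ell\geq|W|}\tfrac{n-\ell}{r-\ell}=f_L(n)\cdot\prod_{\ell\in L,\,\ell<|W|}\tfrac{r-\ell}{n-\ell}\leq f_L(n)\cdot\tfrac{r-\ell_1}{n-\ell_1}.
\]
For $n\geq(2r)^{r+1}$, the tail factor is at most $\tfrac{r}{n-r}\ll\tfrac{1}{3r}$, contradicting the assumption.

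If $|W|=\ell_1$, the induced $K_r$ structure forces $G[W]\cong K_{\ell_1}$ and $A\setminus W\subseteq N:=\bigl(\bigcap_{w\in W}N_G(w)\bigr)\setminus W$ for every $A\in\mathscr{H}$; set $n_1:=|N|$ and pass to the link $\mathscr{H}':=\{A\setminus W:A\in\mathscr{H}\}$. This $\mathscr{H}'$ is an $(L-\ell_1)$-intersecting family of induced $K_{r-\ell_1}$'s in $G[N]$ with $\min(L-\ell_1)=0$ and $|\mathscr{H}'|=|\mathscr{H}|$. Applying Theorem \ref{thm: deza-E-F} to $\mathscr{H}'$ on $N$ bounds $|\mathscr{H}|$ by $\prod_{\ell\in L}\tfrac{n_1+\ell_1-\ell}{r-\ell}$, and writing $n_1=n-\ell_1-t$ the ratio with $f_L(n)$ becomes $\prod_{\ell\in L}(1-\tfrac{t}{n-\ell})\leq 1-\tfrac{t}{n}$. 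Hence whenever $t\geq\tfrac{n}{3r}$ the bound already yields the desired factor.

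The main obstacle is the tight subcase $t<\tfrac{n}{3r}$, when almost every vertex outside $W$ lies in $N$. Here Theorem \ref{thm: deza-E-F} on $\mathscr{H}'$ alone is insufficient (its moreover clause is vacuous since $\min(L-\ell_1)=0$), and a naive averaging over $v\in N$ of the Case~1 bound on $\mathscr{H}_v:=\{A\in\mathscr{H}:v\in A\}$ (whose common intersection contains $W\cup\{v\}\supsetneq W$) only recovers $|\mathscr{H}|\leq\tfrac{n_1}{n-\ell_1}f_L(n)\leq f_L(n)$, matching DEF without improvement. To extract the missing factor one must exploit the graph structure of $G$: because $W$ is exactly the common intersection, $G-w$ is $K_r$-free for every $w\in W$, so $G[N]$ is $K_r$-free, and the extension computation shows that any $(r-\ell_1+1)$-clique in $G[N]$ forces two $K_r$'s in $\mathscr{H}$ intersecting in exactly $r-1$ vertices (so $r-1$ must lie in $L$), giving a further $K_{r-\ell_1+1}$-freeness constraint on $G[N]$ when $\max L<r-1$. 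Converting this Tur\'an-type clique-freeness into a quantitative $(1-\tfrac{1}{3r})$-factor gap via a supersaturation or stability estimate is where I expect the bulk of Helliar and Liu's technical work to lie.
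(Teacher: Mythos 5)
There is a genuine gap, and you name it yourself: the entire content of the theorem sits in the subcase you leave open. Your Case 1 ($|W|>\ell_1$) and the ``$t\geq n/(3r)$'' part of Case 2 are fine but essentially free, since there the Deza--Erd\H{o}s--Frankl bound already loses a polynomial factor or a linear-in-$t$ factor; the theorem of Helliar and Liu is precisely about beating DEF by the constant factor $1-\frac{1}{3r}$ in the remaining regime, and for that you offer only a sketch of structural observations together with the admission that converting them into the quantitative saving ``is where the bulk of the technical work lies.'' That missing step is not routine: one must run a sunflower/link analysis inside $N$ (this is what the actual proof in \cite{helliar2024generalized} does, via the kind of degree-deletion and sunflower-core structure that the present paper reuses in Claim~\ref{claim: sunflower}), and nothing in your outline supplies it. Moreover, the structural facts you do extract fail exactly where they would be needed most: if $\ell_1=\min L=0$ (allowed by the hypothesis, e.g.\ $L=\{0,1\}$, $r=3$), then $W=\emptyset$, the set $N$ is all of $V(G)$, $t=0$, and the statements ``$G-w$ is $K_r$-free for $w\in W$'' and the forced $K_{r-\ell_1+1}$-freeness are vacuous, so your Case 2 collapses to the plain DEF bound with no improvement. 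Likewise, when $r-1\in L$ the clique-freeness constraint you derive imposes nothing, so even for $\ell_1\geq 1$ the sketch does not cover all admissible $L$.

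Two smaller but real issues: (i) your appeal to the ``moreover'' clause of Theorem~\ref{thm: deza-E-F} needs $|\mathscr{H}|\geq C(r,L)n^{|L|-1}$ with an unspecified constant $C(r,L)$, so the argument only works for $n$ large in terms of $C(r,L)$, whereas the statement asserts the bound for all $n\geq (2r)^{r+1}$; (ii) in Case 1, after peeling $W$ the uniformity drops to $r-|W|$, which may be below the $r\geq 3$ hypothesis of Theorem~\ref{thm: deza-E-F}, so you should note that the degenerate uniformities are handled by trivial counting. Note also that the paper itself quotes this theorem from \cite{helliar2024generalized} without proof, so the comparison here is against the cited source's argument, which your proposal does not reconstruct.
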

They also gave an optimal result when $L=[t,r-1]$ for $1\leq t<r$.
We set the Tur\'an graph $T(n,t)$ to stand for the balanced complete $t$-partite graph on $n$ vertices.

\begin{theorem}[Helliar-Liu \cite{helliar2024generalized}]
    Let $r>t\geq 1$ be integers. When $n$ is large enough, every $(K_r,t)$-intersecting graph $G$ on $n$ vertices satisfies
    $$N_{ind}(G,K_r)\leq N_{ind}(T(n-t,r-t),K_{r-t})=(1+o(1))\left(\frac{n-t}{r-t} \right)^{r-t},$$
    and equality holds if and only if $G$ is isomorphic to $K_t+T(n-t,r-t)$.
\end{theorem}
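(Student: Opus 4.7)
The plan is to reduce this to a classical generalized Tur\'an bound by first locating a $t$-clique contained in every induced $r$-clique, and then analyzing the common-neighborhood subgraph.

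First, I would apply Theorem~\ref{thm: deza-E-F} with $L = [t, r-1]$, so that $\min L = t$ and $|L| = r-t$. If $N_{ind}(G, K_r) < C n^{r-t-1}$ for the constant $C = C(r,L)$ from that theorem, there is nothing to prove, since the target bound $(1+o(1))((n-t)/(r-t))^{r-t}$ dominates once $n$ is large. Otherwise the second part of Theorem~\ref{thm: deza-E-F} yields $W_0 := \bigcap_{A \in \mathscr{H}^{r}_{G,K_r}} A$ with $|W_0| \geq t$. Pick any $W \subseteq W_0$ with $|W| = t$; since $W$ lies in some induced $K_r$ we have $G[W] = K_t$, and every induced $K_r$ has the form $W \cup A$ where $A$ is an $(r-t)$-clique contained in the common neighborhood $N := \{v \in V(G)\setminus W : vw \in E(G) \text{ for all } w \in W\}$. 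Consequently,
\[
N_{ind}(G, K_r) = N_{ind}(G[N], K_{r-t}).
\]

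Next, the key structural step is to show that $G[N]$ is $K_{r-t+1}$-free. If $A_0 \subseteq N$ were an $(r-t+1)$-clique, then $A_0 \cup W$ would be a $K_{r+1}$ in $G$, and for any $w \in W$ the $r$-set $A_0 \cup (W \setminus \{w\})$ would induce a $K_r$ not containing $W$, contradicting $W \subseteq W_0$. Now invoke the classical Zykov--Erd\H{o}s theorem on the maximum number of $K_{r-t}$'s in a $K_{r-t+1}$-free graph on $|N| \leq n-t$ vertices: the maximum is attained uniquely by $T(n-t, r-t)$. Therefore
\[
N_{ind}(G, K_r) \leq N_{ind}(T(n-t, r-t), K_{r-t}) = (1+o(1))\left(\frac{n-t}{r-t}\right)^{r-t}.
\]

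For the equality case, set $f(s) := ((n-s)/(r-s))^{r-s}$ and note $f(s) = o(f(t))$ as $n \to \infty$ whenever $s > t$; this forces $|W_0| = t$. The uniqueness in the Zykov--Erd\H{o}s theorem then gives $G[N] \cong T(n-t, r-t)$, and equality $|N| = n-t$ forces every vertex of $V(G) \setminus W$ to be adjacent to all of $W$; together with $G[W] = K_t$ this yields $G \cong K_t + T(n-t, r-t)$.

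The main obstacle is the short structural step showing $G[N]$ is $K_{r-t+1}$-free: this is where the $t$-intersecting hypothesis (via $W \subseteq W_0$) is converted into a clean forbidden-subgraph condition, after which everything reduces to classical extremal results. The secondary delicate point is ruling out $|W_0| > t$ in the equality case, which follows from the asymptotic gap between $f(s)$ and $f(t)$.
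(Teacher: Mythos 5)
This statement is quoted in the paper from Helliar--Liu \cite{helliar2024generalized} and is not proved anywhere in this manuscript, so there is no in-paper argument to compare yours against; judged on its own, your proof is correct. The chain of reductions is sound: Deza--Erd\H{o}s--Frankl gives a common core $W_0$ of size at least $t$ once $N_{ind}(G,K_r)\ge Cn^{r-t-1}$ (and otherwise the bound is trivial for large $n$), every induced $K_r$ is $W$ plus an $(r-t)$-clique in the common neighborhood $N$, the $K_{r-t+1}$-freeness of $G[N]$ follows exactly as you argue (an $(r-t+1)$-clique in $N$ plus $W\setminus\{w\}$ is an induced $K_r$ missing $w\in W_0$), and Zykov's theorem finishes the counting, with its uniqueness driving the equality case. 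A few points deserve one line each to be airtight: to conclude $G[N]\cong T(n-t,r-t)$ you should first force $|N|=n-t$ via the strict monotonicity of $N_{ind}(T(m,r-t),K_{r-t})$ in $m$ (for $m\ge r-t$), and only then invoke uniqueness on $|N|$ vertices; ruling out $|W_0|=s>t$ implicitly reruns your whole reduction with the larger core (including the degenerate case $s=r$, where there is at most one induced $K_r$), which is fine but worth saying; the ``if'' direction of the equality statement still needs the easy check that $K_t+T(n-t,r-t)$ is $(K_r,t)$-intersecting and attains the count; and the Deza--Erd\H{o}s--Frankl theorem as quoted requires $r\ge 3$, so the case $r=2$, $t=1$ should be dispatched by the trivial ``intersecting family of $2$-sets is a star or triangle'' remark rather than by citing that theorem.
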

Very recently, Zhao and Zhang \cite{zhao2025counting} completely characterizing all parameters $(r,L)$ with $\Psi_r(n,L)=\Theta_{r,L}(n^{|L|})$ and determined the corresponding exact values of $\Psi_r(n,L)$ for large $n$.
\begin{theorem}[Zhao-Zhang \cite{zhao2025counting}]\label{thm: zhao-zhang Kr}
    Let $r\geq 3$ be a fixed integer and let $L=\{\ell_1,\dots,\ell_s\}\subseteq[0,r-1]$ be a fixed subset of size $s\not\in \{1,r\}$ with $\ell_1<\ell_2<\dots<\ell_s$. Then the following statements hold.

    (1) If $\ell_1,\dots,\ell_s, r$ do not form an arithmetic progression, then $\Psi_r(n,K_r,L)=o(n^s)$. If further $r-\ell_s=\ell_s-\ell_{s-1}$, then $\Psi_r(n,K_r,L)=O(n^{s-1})$.

    (2) If $\ell_1,\dots,\ell_s, r$ form an arithmetic progression and let $d:=r-\ell_s=\dots=\ell_2-\ell_1=(r-\ell_1)/s$ be the common difference, then when $n$ is large enough, we have
    $$\Psi_r(n,K_r,L)=N_{ind}(T(\lf(n-\ell_1)/d\rf,s),K_s)=(1+o(1)) \left(\frac{n-\ell_1}{r-\ell_1}\right)^s,$$
    and the extremal graph is unique up to isomorphism.
\end{theorem}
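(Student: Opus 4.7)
The proof splits into the lower and upper bound for the AP case (2), and the upper bound in the non-AP case (1). The construction is explicit, and both upper bounds are driven by Theorem~\ref{thm: deza-E-F} via a peeling argument.

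For the lower bound of (2), set $d=(r-\ell_1)/s$ and $m=\lfloor(n-\ell_1)/d\rfloor$. Take a clique $W$ of size $\ell_1$, partition $md$ other vertices into $m$ blocks of size $d$, and arrange these blocks into $s$ balanced groups $P_1,\dots,P_s$. Make each block a clique, add a complete bipartite graph between blocks in distinct groups, and join $W$ to every one of the $md$ vertices; leave the remaining $n-md-\ell_1$ vertices isolated. The non-$W$ part is $s$-colourable with colour classes being unions of $d$-cliques, so its maximum clique has size $sd$ and is exactly one block per $P_i$; hence every induced $K_r$ is $W$ together with one block from each $P_i$, and two such $K_r$'s intersect in $\ell_1+jd\in L$ for $0\le j\le s-1$. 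The count equals $N_{ind}(T(m,s),K_s)=(1+o(1))\bigl((n-\ell_1)/(r-\ell_1)\bigr)^s$.

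For the upper bound of (2), first apply Theorem~\ref{thm: deza-E-F}: either $N_{ind}(G,K_r)\le Cn^{s-1}$ and we are done, or there is a common intersection $W$ with $|W|\ge \ell_1$. If $|W|>\ell_1$, no two induced $K_r$'s share only $\ell_1$ vertices, so the family is $(L\setminus\{\ell_1\})$-intersecting and Theorem~\ref{thm: deza-E-F} gives $O(n^{s-1})$. Otherwise $|W|=\ell_1$ and $\mathscr{H}^{r-\ell_1}_{G,K_r,W}$ is a $(K_{r-\ell_1},\{0,d,\dots,(s-1)d\})$-intersecting family of the same cardinality, reducing the problem to the equidistant case. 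For this case I would prove a rigidity result — the hyperedges come from a single partition of $V(G)\setminus W$ into $d$-blocks, each hyperedge being a union of $s$ blocks — and conclude that the count is at most the number of $K_s$'s in a Turán graph on $\lfloor(n-\ell_1)/d\rfloor$ vertices with $s$ parts, which is the claimed bound. Uniqueness of the extremal graph follows from the tightness of the rigidity.

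For (1), apply the same peeling: either $N_{ind}(G,K_r)=O(n^{s-1})$, or a common $W$ with $|W|=\ell_1$ reduces the problem to a $(K_{r-\ell_1},\{0,\ell_2-\ell_1,\dots,\ell_s-\ell_1\})$-intersecting family whose shifted set still fails to be arithmetic with $r-\ell_1$. A stability refinement of the rigidity result should then show that the clean block partition realizing $\Theta(n^s)$ in the AP case cannot arise here, because no single block size admits partition hyperedges simultaneously realizing all prescribed intersection sizes; this forces $o(n^s)$. Under the extra hypothesis $r-\ell_s=\ell_s-\ell_{s-1}$, the last two intersection sizes can be realized by a partition into blocks of size $r-\ell_s$, but the non-AP obstruction at the other end costs a full factor of $n$, tightening the bound to $O(n^{s-1})$.

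\textbf{Main obstacle.} The crux is the equidistant case $L=\{0,d,\dots,(s-1)d\}$, $r=sd$, where Deza-Erd\H{o}s-Frankl provides no further peeling and the bound from Theorem~\ref{thm: deza-E-F} is off by a multiplicative constant. Establishing the block-partition rigidity on the hypergraph $\mathscr{H}^{r}_{G,K_r}$ and lifting it to a blown-up Turán structure in $G$ — so that the inducibility of $K_s$ actually controls the induced $K_r$ count — is the technical heart, and this is where the entropy method mentioned in the abstract naturally enters.
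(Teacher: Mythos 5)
Your lower-bound construction for part (2) is correct and is exactly the known extremal example (a clique on $\ell_1$ vertices completely joined to a $K_d$-blow-up of $T(\lfloor (n-\ell_1)/d\rfloor,s)$), and the opening moves of your upper bounds — Deza--Erd\H{o}s--Frankl to either finish with $O(n^{s-1})$ or extract a common set $W$ of size exactly $\ell_1$, then pass to the link $\mathscr{H}^{r-\ell_1}_{G,K_r,W}$, which is $\{0,d,\dots,(s-1)d\}$-intersecting — are fine and match how the cited proof begins. But from that point on the proposal only names the results it needs: the ``rigidity result'' for the equidistant case and its ``stability refinement'' for the non-AP case are asserted, not proved, and they are the entire content of the theorem (you say as much in your ``Main obstacle'' paragraph). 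Moreover, the rigidity statement as you phrase it — every hyperedge of the link is a union of blocks of one global partition into $d$-sets — is false for an arbitrary $(K_r,L)$-intersecting graph: a sparse collection of stray copies (e.g.\ through low-degree vertices) can break any global partition while still contributing $\Theta(n^{s-1})$ copies of $K_r$. The genuine argument first discards such copies by iterative deletion of low-degree vertices and sunflower/atom arguments (the machinery this paper imports in Claim~\ref{claim: sunflower} and Section~\ref{sec: atom}), proves an approximate block structure on what remains, and then needs a further Tur\'an-type counting step plus a separate stability analysis to get the exact value $N_{ind}(T(\lfloor(n-\ell_1)/d\rfloor,s),K_s)$ and the uniqueness of the extremal graph; ``tightness of the rigidity'' does not by itself deliver either.

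The gap is even more serious in part (1). Your proposed mechanism — no single block size can simultaneously realize all prescribed intersection sizes, hence $o(n^s)$ — cannot work as stated, because in the base case $s=2$ the truth is $n^{2-o(1)}\le \Psi_r(n,K_r,L)=o(n^2)$ (already for $L=\{0,1\}$, $r=3$, by Ruzsa--Szemer\'edi, as recalled in~(\ref{eq: general of (6,3)})). Any correct proof must therefore have removal-lemma strength: a purely structural incompatibility argument would either prove too much (an $O(n^{2-\varepsilon})$ bound, contradicting the lower bound) or prove nothing. The actual route — mirrored in this paper's proof of Theorem~\ref{thm: the main theorem} — handles $|L|=2$ by reducing, via the sunflower claims, to a $\{0,1\}$-intersecting clique-counting problem and invoking the Gowers--Janzer bound, and then handles general $L$ by induction using F\"uredi's structure theorem (Theorem~\ref{thm: furedi}, via a reduction as in Lemma~\ref{lem: reduction lemma}) together with the special $O(n^2)$ bound for $\{\ell_1,\ell_2,\ell_3\}$ with $r-\ell_3=\ell_3-\ell_2\neq\ell_2-\ell_1$ (Lemma~\ref{lem: l1,l2,l3}) to obtain the sharper $O(n^{s-1})$ statement. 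None of these ingredients appears in your sketch, so as written the proposal establishes only the lower bound in (2) and the easy peeling reductions.
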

Observed that Theorem \ref{thm: zhao-zhang Kr} (2) is an extension of Theorem \ref{thm: Helliar with procession}.
Notice that when $L=[0,r]$, $\Psi_r(n,F,L)=i(n,F)$, which is equal to determine the inducibility of $F$. Hence, the problem of determining $\Psi_r(n,F,L)$ is a generalization of inducibility.
If $|L|=1$,  by Theorem~\ref{thm: deza-E-F}, the  extremal graph is easily characterized since all induced copies have a common intersection.
In this paper, we consider the case when $|L|\in [2,r-1]$.
We generalize Theorem \ref{thm: zhao-zhang Kr} (1) from $K_r$ to a graph $F$ with $v(F)=r\geq 3$.
\begin{theorem}\label{thm: the main theorem}
    Let $r\geq 3$ be a fixed integer and $F$ be a fixed graph with $v(F)=r$. Let $L=\{\ell_1,\dots,\ell_s\}\subseteq[0,r-1]$ be a fixed subset of size $s\notin \{1,r\}$ with $\ell_1<\ell_2<\dots<\ell_s$. If $\ell_1,\ell_2,\dots, \ell_s,r$ do not form an arithmetic progression, then $\Psi_r(n,F,L)=o(n^{s})$. Moreover, if
         (1)   $r-\ell_s=\ell_s-\ell_{s-1}$, or (2)
        $r-\ell_s\neq \ell_s-\ell_{s-1}$ and $\ell_s-\ell_{s-1}\nmid r-\ell_s$,
       then we have $\Psi_r(n,F,L)=O(n^{s-1})$.
\end{theorem}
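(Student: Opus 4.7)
The plan is to adapt Zhao and Zhang's proof of Theorem~\ref{thm: zhao-zhang Kr}(1) from the clique case $F=K_r$ to an arbitrary graph $F$ of order $r$.

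Set $\mathscr{H}:=\mathscr{H}^r_{G,F}$ and assume $|\mathscr{H}|\ge Cn^{s-1}$ for a large constant $C=C(r,L)$, since the opposite bound already proves the stronger $O(n^{s-1})$ statement. By the second part of Theorem~\ref{thm: deza-E-F}, the set $W:=\bigcap_{A\in\mathscr{H}}A$ has $|W|\ge\ell_1$. If $|W|\ge\ell_2$, then the link $\mathscr{H}^{r-|W|}_{G,F,W}$ is $L_W$-intersecting with $|L_W|\le s-1$, and the first part of Theorem~\ref{thm: deza-E-F} yields $|\mathscr{H}|=O(n^{s-1})$; so we may assume $|W|=\ell_1$. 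For each $A\in\mathscr{H}$ I fix an isomorphism $\phi_A\colon G[A]\to F$. Since $\phi_A(W)$ takes at most $\binom{r}{\ell_1}$ values, a pigeonhole step produces a subfamily $\mathscr{H}_0\subseteq\mathscr{H}$ of size $\ge|\mathscr{H}|/\binom{r}{\ell_1}$ on which $\phi_A(W)\equiv S$ for a fixed $\ell_1$-set $S\subseteq V(F)$. Setting $r':=r-\ell_1$, $L':=\{0,\ell_2-\ell_1,\dots,\ell_s-\ell_1\}$, and $\mathscr{H}_0':=\{A\setminus W:A\in\mathscr{H}_0\}$, I obtain an $L'$-intersecting $r'$-graph on $V(G)\setminus W$ with empty total intersection; each edge of $\mathscr{H}_0'$ is an induced copy of $F[V(F)\setminus S]$ in $G\setminus W$ with a prescribed bipartite neighborhood into $W$.

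The crux is then to prove $|\mathscr{H}_0'|=o(n^s)$. Note that $L'\cup\{r'\}$ fails the AP condition if and only if $L\cup\{r\}$ does, since all values are shifted by $-\ell_1$. Whenever the reduction above produces a further nonempty common intersection in $\mathscr{H}_0'$, I iterate and reduce $s$ by one. When it does not, $\min L'=0$ and the entropy method takes over: by considering a uniformly random edge of $\mathscr{H}_0'$ with a random ordering of its $r'$ vertices, the chain rule for entropy together with the intersection constraints in $L'$ should bound $\log|\mathscr{H}_0'|$ by $s\log n - \omega(1)$. In the base case $|L'|=2$, so $L'=\{0,d\}$, this recovers (\ref{eq: general of (6,3)}) after encoding each vertex of $V(G)\setminus W$ by its bipartite link to $W$ as a colour and passing to an auxiliary coloured graph. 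For the $O(n^{s-1})$ refinement: in Case~(1), $r-\ell_s=\ell_s-\ell_{s-1}$, the iteration terminates at a residual structure that would itself have to be an AP, and the global failure of the AP condition on $L\cup\{r\}$ forces an extra break that costs a factor of $n$ via Theorem~\ref{thm: zhao-zhang Kr}(2). In Case~(2), $\ell_s-\ell_{s-1}\nmid r-\ell_s$, a divisibility obstruction rules out any $\Theta(n^s)$-configuration, since such a configuration would generate a chain of hyperedges whose pairwise intersections alternate between $\ell_{s-1}$ and $\ell_s$, forcing $r-\ell_s$ to be a multiple of $\ell_s-\ell_{s-1}$.

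The main obstacle is the Gowers--Janzer-type step for arbitrary $F$: the clique argument of Zhao and Zhang exploits that $W$ is automatically a clique and that induced $K_r$-copies are uniquely determined by their vertex sets. For arbitrary $F$ one must carry the pigeonhole over the position $S$ of $W$ inside $F$ alongside the bipartite-link constraint imposed by $F$, and bound the entropy in a way that beats $s\log n$ by a non-trivial additive term. Extending this entropic argument to handle the labelled structure induced by the bipartite links between each candidate $r'$-set and $W$ is the technical heart of the proof.
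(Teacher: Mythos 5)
Your outline correctly identifies the shape of the argument (pass to the common core $W$ of size $\ell_1$, pigeonhole on where $W$ sits inside $F$, reduce to a $\{0,\ell_2-\ell_1,\dots\}$-intersecting link, and ultimately lean on the Gowers--Janzer bound (\ref{eq: general of (6,3)})), but the two steps you yourself flag as "the technical heart" are exactly the content of the proof, and as sketched they do not go through. First, the crux $o(n^2)$ bound in the base case $|L|=2$ cannot be obtained by "the chain rule for entropy" beating $s\log n$ by $\omega(1)$: that statement is of removal-lemma strength (it contains the Ruzsa--Szemer\'edi $(6,3)$-theorem, for which there are $n^{2-o(1)}$ lower bounds), so no direct entropy count can deliver it, and any proof must genuinely \emph{reduce} to (\ref{eq: general of (6,3)}). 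Your proposed reduction ("encode each vertex by its bipartite link to $W$ as a colour") is not developed and does not suffice: the paper's reduction first deletes low-degree vertices and invokes the Helliar--Liu sunflower structure (Claim~\ref{claim: sunflower}) to partition the remaining vertices into atoms of size $\ell_2-\ell_1$ (this is also where the divisibility obstruction $(\ell_2-\ell_1)\mid(r-\ell_1)$, hence the $O(n)$ refinement, actually comes from -- not from an "alternating chain of hyperedges"), and then runs a random-permutation/random-partition argument ("good" and "great" copies) so that great induced copies of $F$ correspond \emph{bijectively} to cliques of an auxiliary graph $G^\star$ which is $(K_{(r-\ell_1)/(\ell_2-\ell_1)},\{0,1\})$-intersecting (Claims~\ref{claim: K_s respond to hyperedge} and \ref{claim: 0,1 intersecting}). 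Establishing the clique-to-copy direction of that correspondence is precisely the labelled-structure difficulty you defer, and a vertex colouring by links to $W$ alone does not give it.

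Second, your induction mechanism for $|L|\ge 3$ is missing. When $\mathscr{H}_0'$ has no further common intersection you cannot simply "iterate and reduce $s$ by one"; the paper needs F\"uredi's structure theorem (Theorem~\ref{thm: furedi}) and the reduction lemma (Lemma~\ref{lem: reduction lemma}) to split the family according to its $\ell_i$-cores and bound it by $\Phi_{\ell_i}(n,\{\ell_1,\dots,\ell_{i-1}\})\cdot\Psi_r(n,F,\{\ell_i,\dots,\ell_s\},\ell_i)$, and in the case $r-\ell_s=\ell_s-\ell_{s-1}$ it additionally needs the Zhao--Zhang bound $\Phi_r(n,\{\ell_1,\ell_2,\ell_3\})=O(n^2)$ when $r-\ell_3=\ell_3-\ell_2\neq\ell_2-\ell_1$ (Lemma~\ref{lem: l1,l2,l3}). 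Your appeal to Theorem~\ref{thm: zhao-zhang Kr}(2) "costing a factor of $n$" is not a valid substitute: that result is an exact $\Theta(n^s)$ statement for cliques in the arithmetic-progression case and provides no upper-bound mechanism here. So the proposal is a plausible plan whose base case, induction step, and both refinements to $O(n^{s-1})$ all remain unproved.
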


We calculate $\Psi_r(n,C_r,L)$ instead of $\Psi_r(n,K_r,L)$ in Theorem \ref{thm: zhao-zhang Kr} (2).
Note that we only need to consider  $r \ge 4$, since $K_3=C_3$.
And we only consider  $0 \notin L$ since if $0 \in L$, the problem will be hard as the inducibility of cycles~(a special case when $L=[0,r-1]$).

\begin{theorem}\label{thm: the main theorem 2}
    Let $r\geq 4$ be a fixed integer and let $L=\{\ell_1,\dots,\ell_s\}\subseteq[1,r-1]$ be a fixed subset of size $s > 1$ with $\ell_1<\ell_2<\dots<\ell_s$.
    If $\ell_1,\ell_2,\dots, \ell_s,r$ form an arithmetic progression and  $d$ is the common difference, then the following statements hold.
    \begin{enumerate}[label=(\arabic*)]
        \item If $d=1$, then
            $$ \Psi_r(n,C_r,L)=\left\{
            \begin{aligned}
            &(1+o(1))\left(\frac{n-\ell_1}{r-\ell_1}\right)^s, \quad~\text{when $(\ell_1,s)\not\in \{(1,3),(2,2)\}$,} \\
            &(2+o(1))\left(\frac{n-\ell_1}{r-\ell_1}\right)^s, \quad~\text{when $(\ell_1,s)\in \{(1,3),(2,2)\}$.}
            \end{aligned}
            \right.
            $$
        \item If $d \ge 2$, then 
        $$ (1+o(1))\left(\frac{n-\ell_1}{r-\ell_1}\right)^s \le \Psi_r(n,C_r,L)\le (2+o(1))\left(\frac{n-\ell_1}{r-\ell_1}\right)^s.$$
        \item If $d\geq 2$ and $\ell_1 > sd$, then
        $$ \Psi_r(n,C_r,L)= (1+o(1))\left(\frac{n-\ell_1}{r-\ell_1}\right)^s.$$
        \item If $d\geq 2$, $s=2$, $\ell_1 < 2d$ and $ \ell_1 \in \{1,2\}$, then
        $$ \Psi_r(n,C_r,L)= (2+o(1))\left(\frac{n-\ell_1}{r-\ell_1}\right)^s.$$
        \item If $d \ge 2, s=2,  \ell_1 < d/2 $ and $\ell_1 \in \{3,4\}$, then
        $$ \Psi_r(n,C_r,L)= (2+o(1))\left(\frac{n-\ell_1}{r-\ell_1}\right)^s.$$
        \item If $d \ge 2, s=2$, and  $ \ell_1 = 2d $, then
        $$ \left(\frac{4}{3} +o(1) \right) \left(\frac{n-\ell_1}{r-\ell_1}\right)^s \le \Psi_r(n, C_r, L) \le \left(2 - \frac{2}{f(\ell_1)} +o(1)\right) \left(\frac{n-\ell_1}{r-\ell_1}\right)^s,$$
        where $f(\ell_1)$ will be defined later.
    \end{enumerate}
\end{theorem}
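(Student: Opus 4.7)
The plan is to combine the Deza--Erd\H{o}s--Frankl theorem with a type-by-type analysis of induced copies of $C_r$, the entropy method, and explicit constructions. First I would reduce to the case of a common kernel: given an $(C_r,L)$-intersecting graph $G$, we may assume $|\mathscr{H}_{G,C_r}^r|=\omega(n^{s-1})$, for otherwise the claimed bounds are immediate. Since $\ell_1\geq 1$, Theorem \ref{thm: deza-E-F} produces a common set $W=\bigcap_{A\in\mathscr{H}_{G,C_r}^r}A$ with $|W|\geq\ell_1$. If $|W|>\ell_1$, a second application of Theorem \ref{thm: deza-E-F} to the shifted $(r-|W|)$-graph $\mathscr{H}_{G,C_r,W}^{r-|W|}$, whose associated $L$-set has at most $s-1$ values, forces $|\mathscr{H}|=O(n^{s-1})$. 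Hence in all regimes we may assume $|W|=\ell_1$ exactly, and every induced $C_r$ in $G$ is obtained from $W$ by adjoining $sd$ vertices of $V(G)\setminus W$; the completions form an $L'$-intersecting $(sd)$-graph with $L'=\{0,d,2d,\dots,(s-1)d\}$.

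Next I would classify induced copies of $C_r$ by \emph{type}: the cyclic arrangement of the $\ell_1$ vertices of $W$ on the cycle, recorded by a gap sequence $(g_1,\dots,g_t)$ with $t\leq\ell_1$ and $\sum g_i=sd$. Only $O_{r,\ell_1}(1)$ types exist, and within each type a copy is determined by filling every gap by an induced path of prescribed length in $G-W$ whose endpoints are adjacent in $G$ to the bounding $W$-vertices, with all cross-gap non-adjacencies forced by $C_r$ respected. For each type $\tau$ separately the entropy method on the gap fillings, combined with the $L'$-intersecting structure of the completions, will yield an upper bound $c_\tau\bigl((n-\ell_1)/(r-\ell_1)\bigr)^s+o(n^s)$ with an explicit combinatorial constant $c_\tau$. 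Summing $c_\tau$ over admissible types gives the upper bound in each part. Note that a direct use of Deza--Erd\H{o}s--Frankl is too weak here because $0\in L'$ allows disjoint completions, which is why the entropy method is essential.

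For the lower bound I would construct graphs $G=W\cup T$ in which $T$ is a blow-up whose structure realises the desired types. In the generic subcases of parts~(1), (2), and (3), a single ``path'' blow-up on $s$ equal classes of size $(n-\ell_1)/(r-\ell_1)$, joined to $W$ so that one chosen vertex per class extends $W$ to an induced $C_r$, yields the lower bound $(1+o(1))\bigl((n-\ell_1)/(r-\ell_1)\bigr)^s$. For the sporadic pairs $(\ell_1,s)\in\{(1,3),(2,2)\}$ in part~(1) and for parts~(4)--(5), I would exploit an antipodal or symmetric placement of $W$ on $C_r$ so that \emph{every} unordered pair (respectively triple) of non-kernel vertices in appropriate classes completes $W$ to an induced $C_r$, doubling the count. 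In part~(6) the construction uses a finer packing whose density is captured by the combinatorial function $f(\ell_1)$, yielding the two-sided bound between $4/3$ and $2-2/f(\ell_1)$.

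The main obstacle will be the fine case analysis deciding which types can \emph{coexist} in an $(C_r,L)$-intersecting graph and with what multiplicity. In part~(3) the hypothesis $\ell_1>sd$ is used to show that any two copies of different types would intersect outside $L$, so only the canonical type survives and the factor is $1$. In parts~(4)--(5) two symmetric types coexist and each contributes the base count, giving the factor $2$. In part~(6) no single type dominates and the balance between types is governed by $f(\ell_1)$, which is why only a two-sided bound is claimed. The hardest single step is matching the entropy-based upper bound with the doubled lower bound constructions in the sporadic cases, because, in contrast to $K_r$, the non-edges of $C_r$ propagate constraints between gaps that must be tracked carefully when bounding each $c_\tau$.
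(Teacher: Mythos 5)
There is a genuine gap: your outline names the right ingredients (Deza--Erd\H{o}s--Frankl kernel $W$, constructions, entropy) but the step that carries all the difficulty is simply asserted. You say that within each ``type'' the entropy method ``will yield'' a bound $c_\tau\bigl((n-\ell_1)/(r-\ell_1)\bigr)^s$, but you give no mechanism by which the $L$-intersecting hypothesis constrains the gap fillings. The paper's proof rests on the atom structure: $V(G)\setminus W$ (up to a negligible part, controlled by a Frankl--Tokushige lemma) is partitioned into atoms of size exactly $d$, at most $(n-\ell_1)/d$ of them, and every relevant induced $C_r$ is $W$ plus a union of $s$ atoms; only then can an entropy count over atom sequences (after a ``vanishing'' reduction to $\ell_1=1$) produce the normalization $\bigl((n-\ell_1)/(sd)\bigr)^s$. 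Moreover, your account of where the constants come from is off: the dichotomy $1$ versus $2$ is produced \emph{inside} the entropy estimate (disjointness of the forward/backward neighbour sets when $s\ge 4$; atoms meeting $N(y_1)$ once versus twice when $d\ge 2$), not by summing over coexisting types --- e.g.\ for $(\ell_1,s)=(2,2)$ the extremal graph $K_{2,n-2}$ has a single type and the doubling comes from unordered pairs in one class --- and the cases $(\ell_1,s)=(2,3),(3,3)$ with $d=1$ need separate ad hoc arguments to push the entropy constant $2$ down to $1$, which your plan does not anticipate. Your mechanism for part (3), ``any two copies of different types would intersect outside $L$,'' is unjustified (two copies arranging $W$ differently can intersect exactly in $W$, of size $\ell_1\in L$); the paper instead uses that $\ell_1>sd$ forces an edge inside $G[W]$ and then an induction that peels off the atoms adjacent to a degree-one vertex of $G[W]$, with Tur\'an's theorem at the base.

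Part (6) is the clearest omission: ``a finer packing whose density is captured by $f(\ell_1)$'' contains neither bound. The paper's upper bound maps each surviving atom $S$ to a perfect matching $\phi(S)$ on $W$, proves the auxiliary graph on atoms is $K_{f(\ell_1)+1}$-free --- because a Hamiltonian cycle of $W$ formed by mixing more than two matchings would yield an induced $C_r$ whose intersection with another copy has size outside $\{\ell_1,\ell_1+d\}$, i.e.\ exactly the ``flawless'' condition fails --- and then applies Tur\'an to get the factor $1-1/f(\ell_1)$. The lower bound $4/3$ requires exhibiting a $3$-regular graph on $\ell_1$ vertices with a flawless $1$-factorization and turning it into a graph via subdivision and clique blow-ups, verifying no stray induced $C_r$ appears. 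None of this is recoverable from your sketch, so as written the proposal is a plan rather than a proof.
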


In the following, we give the definition of $f(\ell_1)$ appeared in Theorem~\ref{thm: the main theorem 2} (6) which has a connection with perfect $1$-factorization.
A \textbf{factor} of a graph $G$ is a spanning subgraph, i.e., a subgraph that has the same vertex set as $G$.
A \textbf{$k$-factor} of $G$ is a spanning $k$-regular subgraph.
A \textbf{$k$-factorization} partitions the edges of $G$ into disjoint $k$-factors.
 $G$ is \textbf{$k$-factorable} if it admits a $k$-factorization.
Particularly, a $1$-factor is a perfect matching, and a $1$-factorization of a $k$-regular graph is a proper edge coloring with $k$ colors.
A \textbf{perfect pair} from a $1$-factorization is a pair of $1$-factors whose union induces a Hamiltonian cycle.
A \textbf{perfect $1$-factorization}~(P1F) of a graph is a $1$-factorization having the property that every pair of $1$-factors is a perfect pair. If we view each $1$-factor~(perfect matching) as a color in the edge coloring, then using the language of edge coloring, a P1F of a $k$-regular graph is an edge coloring with $k$ colors such that
\begin{enumerate}
    \item each color class is a perfect matching,
    \item and every two colors induce a Hamiltonian cycle.
\end{enumerate}

In 1964, Kotzig~\cite{kotzig1963hamilton} first posed this definition and conjectured that every complete graph $K_{2n}$ where $n \ge 2$ has a P1F.
So far, it is proved that some class of graphs have a P1F, such as, $K_{2p}$~\cite{anderson1973finite} and $K_{p+1}$~\cite{wallis1997one,bryant2006new} where $p$ is a prime.
For more details about perfect $1$-factorization, the readers may refer to the survey~\cite{rosa2019perfect}.

We now add a new constraint.
We say a $1$-factorization of a graph $G$ is \textbf{flawless} if it is perfect and every Hamiltonian cycle in graph $G$ is a union of two $1$-factors from the $1$-factorization. Using the language of edge coloring, a flawless $1$-factorization of a $k$-regular graph is an edge coloring with $k$ colors such that
\begin{enumerate}
    \item each color class is a perfect matching,
    \item every two colors induce a Hamiltonian cycle,
    \item and each Hamiltonian cycle only contains two colors.
\end{enumerate}

For an even number $n$, let $f(n)$ be the maximum number of $k$ such that there exists a $k$-regular graph $G$ with $n$ vertices which admits a flawless $1$-factorization.
We will prove $f(n) \ge 3$ for even $n \ge 4$ and we conjecture that
\begin{conjecture}
    $f(n) = 3$ for even $n \ge 4$.
\end{conjecture}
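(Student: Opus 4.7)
The plan is to prove the upper bound $f(n) \le 3$, since the authors will separately establish $f(n) \ge 3$. Assume for contradiction that $G$ is a $k$-regular graph on an even number of vertices $n \ge 4$ admitting a flawless $1$-factorization with color classes $c_1, \ldots, c_k$ and $k \ge 4$. First I would reduce to $k = 4$: for any four color classes the spanning $4$-regular subgraph $G' = c_1 \cup c_2 \cup c_3 \cup c_4$ inherits a flawless $1$-factorization, because each pair $c_i \cup c_j$ remains a Hamiltonian cycle, and every Hamiltonian cycle of $G'$ is also a Hamiltonian cycle of $G$, which uses only two colors by the flawless property of $G$.

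Next I would perform a cubic reduction. For each triple $\{i,j,\ell\} \subseteq \{1,2,3,4\}$, the spanning cubic subgraph $G_{ij\ell} = c_i \cup c_j \cup c_\ell$ has the property that every one of its Hamiltonian cycles is also a Hamiltonian cycle of $G$, hence uses only two colors and must be one of $H_{ij}, H_{i\ell}, H_{j\ell}$. Consequently $G_{ij\ell}$ has exactly three Hamiltonian cycles and admits a pan-Hamiltonian $3$-edge-coloring, i.e., every pair of color classes unions to a Hamiltonian cycle. In particular, through each edge exactly two of these three Hamiltonian cycles pass, which is consistent with Thomason's parity theorem for cubic graphs and so yields no immediate contradiction.

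The final step is to derive a contradiction from the coexistence of four such cubic subgraphs $G_{123}, G_{124}, G_{134}, G_{234}$ sharing color classes pairwise inside $G$. Two small cases are immediate: if $n = 4$ no $4$-regular graph exists, and if $n = 6$ then $G$ must be $K_{2,2,2}$, whose $16$ Hamiltonian cycles strictly exceed the six $H_{ij}$ allowed by a flawless $4$-coloring. For $n \ge 8$ one would aim to prove that every cubic graph with a pan-Hamiltonian $3$-edge-coloring contains a fourth Hamiltonian cycle using all three colors, which would contradict the cubic reduction above.

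The main obstacle is precisely establishing this fourth-Hamiltonian-cycle statement. Cubic graphs with a pan-Hamiltonian $3$-edge-coloring and only three Hamiltonian cycles currently include $K_4$ and the triangular prism, and excluding further such examples on arbitrarily many vertices appears delicate, likely requiring a structural analysis of the three fixed-point-free involutions $\pi_i, \pi_j, \pi_\ell$ defined by the color matchings (each $\pi_i \pi_j$ being a product of two $(n/2)$-cycles). A complementary approach would be to apply counting or entropy techniques, in the spirit of those used elsewhere in this paper, to show that any $4$-regular graph on $n$ vertices contains more than six Hamiltonian cycles once $n$ is sufficiently large, thereby reducing the conjecture to a finite check for small $n$.
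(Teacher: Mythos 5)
First, note that the statement you are trying to prove is left \emph{open} in the paper: the authors only establish the lower bound $f(n)\ge 3$ (Lemma~\ref{lem: lb f(n)}) and report a computer verification of $f(n)=3$ for $n=4,6,8$. So there is no proof in the paper to compare against, and your proposal would need to stand entirely on its own. It does not: you yourself flag the decisive step as an obstacle, and that step is in fact false. Your reduction to $k=4$ and then to the cubic subgraphs $G_{ij\ell}=c_i\cup c_j\cup c_\ell$ is sound, and it correctly shows that each $G_{ij\ell}$ is a cubic graph carrying a pan-Hamiltonian $3$-edge-coloring in which every Hamiltonian cycle is bichromatic --- but that is precisely the definition of a flawless $1$-factorization of a cubic graph, and Lemma~\ref{lem: lb f(n)} constructs such cubic graphs for \emph{every} even $n\ge 4$ (see Figure~\ref{fig: f(n) ge 3}, which includes an example on $12$ vertices). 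Hence the lemma you hope to prove, that every cubic graph with a pan-Hamiltonian $3$-edge-coloring on $n\ge 8$ vertices contains a fourth, tricolored Hamiltonian cycle, is contradicted by the paper's own construction; examples are not confined to $K_4$ and the prism. Consequently, no argument that looks at a single color triple in isolation can yield the contradiction: any proof of $f(n)\le 3$ must exploit the simultaneous existence of the four triples $G_{123},G_{124},G_{134},G_{234}$ sharing their perfect matchings pairwise (or some other genuinely global property of the $4$-regular graph), and your outline supplies no such argument.

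The remaining ingredients are also not proofs. The small cases $n=4,6$ are fine (and already covered by the authors' computer check up to $n=8$), but the alternative route you suggest --- showing that every $4$-regular graph on large $n$ has more than six Hamiltonian cycles --- is again only a hope: as stated it would have to apply to all $4$-regular graphs arising as unions of four pairwise-compatible matchings, and no counting or entropy argument is given; lower-bounding the number of Hamiltonian cycles in regular Hamiltonian graphs is a notoriously delicate problem (compare Sheehan's conjecture). In short, the proposal is a reasonable reduction of the conjecture to a statement about interacting flawless cubic structures, but the key claim it rests on is false as formulated, and the conjecture remains unproved.
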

Note that if the conjecture is true, then the equality in Theorem~\ref{thm: the main theorem 2}~(6) holds.
With computer assistance, we can verify that the conjecture for $n=4,6,8$.

Furthermore, we  study the number of copies of $F$ instead of  induced copies of $F$. Denote $N(G, F)$ as the number of copies of $F$ in $G$.   Let $v(F)=r$, and we define $$\mathscr{H}^r_{G,span(F)}:=\left\{S\in \binom{V(G)}{r}:~ G[S] \text{~contains a copy of $F$} \right\}.$$ For an integer set $L\subseteq [0,r-1]$, a graph $G$ is called $(span(F),L)$-intersecting if its associate $r$-graph $\mathscr{H}^r_{G,span(F)}$ is $L$-intersecting.
We rewrite the extremal function as
$$\Psi_r(n,span(F),L):=\max\{N(G,F):~ \text{$G$ is an $n$-vertex $(span(F),L)$-intersecting graph}\}.$$
Similarly, we can define the function $\Psi_r(n,span(F),L,\ell)$ be the maximum number of $N(G,F)$ where $G$ is an $n$-vertex $(span(F),L)$-intersecting graph and all the induced copies of $F$ have a common intersection of size at least $\ell$.
We have a similar result as Theorem \ref{thm: the main theorem}.
\begin{theorem}\label{thm: the main theorem 3}
    Let $r\geq 3$ be a fixed integer and $F$  a fixed graph with $v(F)=r$. Let $L=\{\ell_1,\dots,\ell_s\}\subseteq[0,r-1]$ be a fixed subset of size $s\notin \{1,r\}$ with $\ell_1<\ell_2<\dots<\ell_s$. If $\ell_1,\ell_2,\dots, \ell_s,r$ do not form an arithmetic progression, then $\Psi_r(n,span(F),L)=o(n^{s})$. Moreover, if $r-\ell_s=\ell_s-\ell_{s-1}$ or $r-\ell_s\neq \ell_s-\ell_{s-1}$ and $\ell_s-\ell_{s-1}\nmid r-\ell_s$, then $\Psi_r(n,span(F),L)=O(n^{s-1})$.
\end{theorem}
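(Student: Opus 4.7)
My plan is to reduce Theorem \ref{thm: the main theorem 3} directly to Theorem \ref{thm: the main theorem} by decomposing the spanning hypergraph into induced pieces. Let $\mathcal{F}$ denote the (finite) collection of pairwise non-isomorphic graphs $F'$ on $r$ vertices that contain $F$ as a (not necessarily induced) subgraph. Since every $r$-set $S\subseteq V(G)$ with $G[S]$ containing a copy of $F$ has $G[S]\cong F'$ for exactly one $F'\in\mathcal{F}$, the associated hypergraph decomposes as the disjoint union
\[
\mathscr{H}^r_{G,span(F)} \;=\; \bigsqcup_{F'\in\mathcal{F}} \mathscr{H}^r_{G,F'}.
\]
Letting $c(F',F)$ denote the number of (labeled) copies of $F$ in $F'$, a constant depending only on $F$ and $F'$, we have
\[
N(G,F) \;=\; \sum_{F'\in\mathcal{F}} c(F',F)\cdot N_{ind}(G,F').
\]

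Next I will use the hypothesis that $G$ is $(span(F),L)$-intersecting. This means $\mathscr{H}^r_{G,span(F)}$ is $L$-intersecting, and therefore every subhypergraph is also $L$-intersecting; in particular, for each $F'\in\mathcal{F}$, the hypergraph $\mathscr{H}^r_{G,F'}$ is $L$-intersecting, i.e., $G$ is $(F',L)$-intersecting. Applying Theorem \ref{thm: the main theorem} to each $F'\in\mathcal{F}$ (whose hypothesis depends only on $r$ and $L$, not on $F'$), under the assumption that $\ell_1,\dots,\ell_s,r$ do not form an arithmetic progression we obtain
\[
N_{ind}(G,F') \;\le\; \Psi_r(n,F',L) \;=\; o(n^{s}),
\]
and under the additional hypothesis $r-\ell_s=\ell_s-\ell_{s-1}$ or ($r-\ell_s\neq\ell_s-\ell_{s-1}$ and $\ell_s-\ell_{s-1}\nmid r-\ell_s$), the stronger bound $N_{ind}(G,F')=O(n^{s-1})$ holds.

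Finally, summing over $F'\in\mathcal{F}$ yields the desired conclusion: since $|\mathcal{F}|\leq 2^{\binom{r}{2}}$ depends only on $r$, and each $c(F',F)\leq r!$ is a constant, we conclude
\[
N(G,F) \;\le\; \Bigl(\max_{F'\in\mathcal{F}} c(F',F)\Bigr)\cdot |\mathcal{F}|\cdot \max_{F'\in\mathcal{F}} N_{ind}(G,F') \;=\; o(n^{s}),
\]
and analogously $N(G,F)=O(n^{s-1})$ in the second case. Taking the maximum over $(span(F),L)$-intersecting $n$-vertex graphs $G$ gives the bounds on $\Psi_r(n,span(F),L)$. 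There is no real obstacle here beyond Theorem \ref{thm: the main theorem} itself; the only point to check carefully is that the $L$-intersecting property is inherited by each subhypergraph $\mathscr{H}^r_{G,F'}$, which is immediate from the definition, and that the decomposition of $N(G,F)$ into induced counts is genuinely a finite sum with constants depending only on $r$ and $F$.
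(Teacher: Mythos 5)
Your proposal is correct, but it takes a genuinely different route from the paper. You reduce the span version directly to Theorem \ref{thm: the main theorem}: you partition $\mathscr{H}^r_{G,span(F)}$ according to the isomorphism type $F'$ of $G[S]$ over the finitely many $r$-vertex supergraphs of $F$, observe that any subfamily of an $L$-intersecting family is $L$-intersecting (so $G$ is $(F',L)$-intersecting for each $F'$), apply Theorem \ref{thm: the main theorem} to each $F'$, and sum the finitely many $o(n^{s})$ (resp.\ $O(n^{s-1})$) terms with constants depending only on $r$ and $F$. The paper instead does not invoke Theorem \ref{thm: the main theorem} as a black box: it re-runs the whole induction in the span setting, extending the base-case Lemma \ref{lem: L=l1,l2} to Lemma \ref{lem: L=l1,l2-span} (replacing the induced-copy bijection $\phi_A$ by a bijection witnessing a not-necessarily-induced copy) and the reduction Lemma \ref{lem: reduction lemma} to Lemma \ref{lem: reduction lemma-span}, which carries an extra factor $N(K_r,F)$. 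Your approach is shorter and cleaner, since it needs only the finiteness of the family of supergraphs and the hereditary nature of the $L$-intersecting condition, and the hypotheses of Theorem \ref{thm: the main theorem} depend only on $r$ and $L$, so they hold uniformly for every $F'$; the only bookkeeping point is that your $c(F',F)$ should count subgraph copies rather than labeled embeddings to make the displayed identity exact, though for the upper bounds this constant-factor distinction is immaterial. What the paper's route buys is span-analogues of the intermediate lemmas stated explicitly (potentially reusable elsewhere), at the cost of essentially repeating the argument.
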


The paper is organized as follows.
In Section~\ref{sec: th1,3}, we will first prove Theorems~\ref{thm: the main theorem} and \ref{thm: the main theorem 3}.
Then we introduce some tools which will be used to prove Theorem~\ref{thm: the main theorem 2} in Section~\ref{sec: preliminary}.
In Sections~\ref{sec: proof AP 1} to \ref{sec: 6}, we will prove Theorem~\ref{thm: the main theorem 2}.

\section{Proof of Theorems~\ref{thm: the main theorem} and \ref{thm: the main theorem 3}}\label{sec: th1,3}

\subsection{Proof of Theorem~\ref{thm: the main theorem}}

 As in \cite{helliar2024generalized} and  \cite{zhao2025counting}, our strategy is to apply induction on both $r$ and the cardinality of the set $L$. The main idea of the proof comes from \cite{zhao2025counting}.
The key definitions, {\bf good and great induced $F$} which we will introduce later, helps us to generalize the result of $K_r$ to any induced $F$ with $r$ vertices.
In an $r$-uniform hypergraph $\mathscr{H}$, we call a collection of hyperedges $A_1,\dots,A_k$ a \textbf{sunflower} with core $C$ if $\bigcap_{i=1}^kA_i=C$.
We will start with the base case when $L=\{\ell_1,\ell_2\}$ where $0\leq \ell_1<\ell_2<r$.
\begin{lemma}\label{lem: L=l1,l2}
    Let $r\geq 3$, $0\leq \ell_1<\ell_2$ with $r-\ell_2\neq \ell_2-\ell_1$, and $F$ be a graph with $r$ vertices. Then $\Psi_r(n,F,\{\ell_1,\ell_2\})=o(n^2)$. Moreover, if $\ell_2-\ell_1\nmid r-\ell_1$, then $\Psi_r(n,F,\{\ell_1,\ell_2\})=O(n)$.
\end{lemma}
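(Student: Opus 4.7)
The plan is to mimic the inductive strategy of Zhao-Zhang (Theorem \ref{thm: zhao-zhang Kr}(1)) used for $K_r$, while introducing the promised notions of \emph{good} and \emph{great} induced copies of $F$ to bridge from $K_r$ to a general graph $F$ of order $r$. Set $\mathscr{H} := \mathscr{H}^r_{G,F}$, $r' := r - \ell_1$, and $d := \ell_2 - \ell_1$.

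The first step is a reduction to a link hypergraph. If $|\mathscr{H}| \geq Cn$ with $C = C(r,L)$ from Theorem \ref{thm: deza-E-F}, then $W := \bigcap_{A \in \mathscr{H}} A$ has $|W| \geq \ell_1$. A short case analysis handles $|W| > \ell_1$: when $|W| = \ell_2$, the hyperedges are $W \cup S_i$ with disjoint $(r-\ell_2)$-petals $S_i$, so $|\mathscr{H}| \leq (n-\ell_2)/(r-\ell_2) = O(n)$; when $\ell_1 < |W| < \ell_2$ every pairwise intersection is forced to equal exactly $\ell_2$, and the classical Deza theorem for families with constant pairwise intersection makes the link either bounded by a constant or a disjoint-petal sunflower, in both cases giving $|\mathscr{H}| = O(n)$. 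So we may assume $|W| = \ell_1$ and pass to the link $\mathscr{L} := \mathscr{H}^{r-\ell_1}_{G,F,W}$, an $r'$-uniform $\{0,d\}$-intersecting hypergraph with $|\mathscr{L}| = |\mathscr{H}|$. The hypothesis $r - \ell_2 \neq \ell_2 - \ell_1$ becomes $r' \neq 2d$, and the moreover hypothesis becomes $d \nmid r'$.

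The second step imports the graph-theoretic structure of $F$. For each $S \in \mathscr{L}$ the set $W \cup S$ induces a copy of $F$, and for each intersecting pair $S_1, S_2 \in \mathscr{L}$ the $\ell_2$-subset $W \cup (S_1 \cap S_2)$ induces a subgraph of $F$ of one of only finitely many isomorphism types (depending only on $F$, $\ell_1$, $\ell_2$). Pigeonholing first over this type yields a subfamily $\mathscr{L}' \subseteq \mathscr{L}$ of \emph{good} induced copies of size $\Omega(|\mathscr{L}|)$ in which every $d$-intersection realises the same $\ell_2$-vertex subgraph of $F$; pigeonholing again over the isomorphism type of $G[W \cup S_1 \cup S_2]$ for intersecting pairs refines this to a family $\mathscr{L}''$ of \emph{great} induced copies of size $\Omega(|\mathscr{L}|)$. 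Restricted to $\mathscr{L}''$, any pairwise-intersecting triple $A,B,C$ carries a rigid $F$-image; the arithmetic condition $r' \neq 2d$ forbids the only configuration in which the three $d$-intersections could pairwise be disjoint and jointly cover each of $A,B,C$, so every such triple must have $A \cap B \cap C \neq \emptyset$. Combining this rigidity with a Ruzsa-Szemer\'edi / Gowers-Janzer style removal (in the spirit of (\ref{eq: general of (6,3)})) applied to the $d$-intersection cores of $\mathscr{L}''$ yields $|\mathscr{L}''| = o(n^2)$ and hence $|\mathscr{H}| = o(n^2)$.

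For the $O(n)$ moreover under $d \nmid r'$, the divisibility obstruction provides extra rigidity: any large sunflower $\mathscr{L}_D \subseteq \mathscr{L}''$ with $d$-core $D$ forces every other great hyperedge either to contain $D$ or to tile its remaining vertices into $d$-blocks matching the petals of $\mathscr{L}_D$, which is impossible when $r'$ is not a multiple of $d$. Consequently $\mathscr{L}''$ collapses to a single sunflower with $d$-core, giving $|\mathscr{L}''| \leq (n-d)/(r'-d) = O(n)$, and since $|\mathscr{L}''| = \Omega(|\mathscr{L}|)$ this yields $|\mathscr{H}| = O(n)$. I expect the main obstacle to be the triangle-rigidity step: for $F = K_r$ every $d$-subset of an induced $K_r$ automatically carries the same structure, but for general $F$ the good/great refinements must be set up delicately enough that the condition $r' \neq 2d$ (resp.\ $d \nmid r'$) remains usable after two successive pigeonholes, without losing enough of $|\mathscr{L}|$ to ruin the removal argument.
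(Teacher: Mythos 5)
Your overall blueprint (pass to the link hypergraph $\mathscr{H}^{r-\ell_1}_{G,F,W}$, exploit the $d$-core/sunflower structure, use ``good/great'' copies to rigidify, then invoke a Ruzsa--Szemer\'edi/Gowers--Janzer bound, with divisibility handling the $O(n)$ part) is the paper's blueprint, but two of your central steps do not work as stated. First, your definition of good/great copies by pigeonholing over the isomorphism type of $G[W\cup(S_1\cap S_2)]$ (resp.\ $G[W\cup S_1\cup S_2]$) is a property of \emph{pairs}, not of single hyperedges, so ``passing to a subfamily $\mathscr{L}''$ of size $\Omega(|\mathscr{L}|)$ in which every $d$-intersection realises the same type'' is a Ramsey-type demand that pigeonholing does not deliver. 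In the paper, good/great are properties of an individual copy relative to a \emph{fixed} random transversal partition $V(G_k)\setminus W=V_{\ell_1+1}\cup\cdots\cup V_r$, a permutation of $V(F)$, and a fixed partition of $V(F)\setminus F_0$ into $d$-blocks; a positive fraction of copies survive, and this rigidity is exactly what makes the clique-versus-hyperedge correspondence (Claim~\ref{claim: K_s respond to hyperedge}) true. You also omit the low-degree deletion step (removing vertices of link-degree below $(r-\ell_1)^2$), which is what makes the Helliar--Liu sunflower claims (Claim~\ref{claim: sunflower}) applicable: without it, a hyperedge disjoint from your core $D$ may use vertices lying in no core at all, so your ``tile its remaining vertices into $d$-blocks matching the petals'' step, and hence the claimed collapse to a single sunflower for the $O(n)$ part, does not follow. (In the paper, when $\ell_2-\ell_1\nmid r-\ell_1$ the cleaning forces $U(G_k)=\emptyset$ and the entire $O(n)$ loss comes from the deletion process.)

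Second, the ``triangle rigidity'' claim is false: in a $\{0,d\}$-intersecting family with $r'\neq 2d$, pairwise-intersecting triples need not have a common element (already for $r'=3$, $d=1$: $\{a,b,x\},\{b,c,y\},\{c,a,z\}$ pairwise intersect in $\{b\},\{c\},\{a\}$), and no argument from the $F$-structure is given. Moreover, the hypothesis $r-\ell_2\neq\ell_2-\ell_1$ is not used in the paper to exclude such configurations; its role is arithmetic: writing $p=(r-\ell_1)/(\ell_2-\ell_1)$, the auxiliary $p$-uniform hypergraph of cores is $\{0,1\}$-intersecting (Claim~\ref{claim: 0,1 intersecting}), and the hypothesis guarantees $p\geq 3$, so the Gowers--Janzer bound (\ref{eq: general of (6,3)}) gives $o(n^2)$ --- for $p=2$ that bound is simply false ($\Theta(n^2)$ edges are allowed), which is why the case $r-\ell_2=\ell_2-\ell_1$ must be excluded. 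Since the $o(n^2)$ conclusion is genuinely of Ruzsa--Szemer\'edi strength, no local statement of the form ``pairwise-intersecting triples share a vertex'' can be the mechanism; the loose triples you try to forbid are precisely the configurations the removal-type bound is needed to control globally. As it stands, the proposal is missing the cleaning step, a correct definition of good/great, and the reduction to counting $K_p$'s in the $\{0,1\}$-intersecting auxiliary graph, so the main $o(n^2)$ step has a genuine gap.
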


\begin{proof}
   Let $G$ be an $(F,L)$-intersecting graph with $n$ vertices maximizing $N_{ind}(G,F)$, where $L=\{\ell_1,\ell_2\}$.
    Let $\mathscr{H}_{G,F}^r$ be the associated $r$-graph. Then $\mathscr{H}_{G,F}^r$ is $L$-intersecting and we have $|\mathscr{H}^r_{G,F}| = N_{ind}(G,F) = \Psi_r(n,F,L)$.
    Let $C_0=C(r,L)$ be the constant in Theorem \ref{thm: deza-E-F}.   If $|\mathscr{H}_{G,F}^r|\leq C_0n$, then we are done; otherwise, $|\bigcap_{A\in \mathscr{H}^r_{G,F}}A|\geq\min L=\ell_1$ by Theorem \ref{thm: deza-E-F},. Let $W\subseteq\bigcap_{A\in \mathscr{H}^r_{G,F}}A$ such that $|W|=\ell_1$.
    If $\ell_1=0$, then $W=\emptyset$.
    Recall that the hyperedges of $\mathscr{H}^{r-\ell_1}_{G,F,W}$ is the collecting of sets $S\subseteq V(G)$ such that $G[S\cup W]\cong F$. Then $\mathscr{H}^{r-\ell_1}_{G,F,W}$ is $\{0,\ell_2-\ell_1\}$-intersecting.

    As in \cite{zhao2025counting}, we greedily delete vertices with small degree from $G$. We consider the following iterative process. Let $G_0:= G$.
    For $i\geq 0$,  if there exists  $u\in V(G_i)\setminus W$ with $d_{\mathscr{H}^{r-\ell_1}_{G_i,F,W}}(u)<(r-\ell_1)^2$, then delete $u$ from $G_i$ to obtain a new graph $G_{i+1}$; otherwise we stop. Suppose we finally stop at $G_k$.
    Then we have
   \begin{equation}\label{eq-1}
    |\mathscr{H}^{r}_{G,F}|=|\mathscr{H}^{r-\ell_1}_{G,F,W}|\leq |\mathscr{H}^{r-\ell_1}_{G_k,F,W}|+(r-\ell_1)^2k\leq |\mathscr{H}^{r-\ell_1}_{G_k,F,W}|+r^2n.
   \end{equation}

    For each subset $C\subseteq V(G_k)$ of size $\ell_2-\ell_1$, let $\mathcal{S}_C$ denote the maximum sunflower in $\mathscr{H}^{r-\ell_1}_{G_k,F,W}$  with core $C$. As in \cite{helliar2024generalized}, let us define
    \begin{equation*}
    \begin{aligned}
        & U(G_k):=\{u\in V(G_k)\setminus W:~d_{\mathscr{H}^{r-\ell_1}_{G_k,F,W}}(u)\geq (r-\ell_1)^2\},\\
        &\mathcal{C}(G_k):=\left\{C\in \binom{V(G_k)\setminus W}{\ell_2-\ell_1}:|\mathcal{S}_C|\geq (r-\ell_1)^2\right\}.
    \end{aligned}
    \end{equation*}
    Note that by the way we obtain $G_k$, we have $ U(G_k)\cup W= V(G_k)$.
    The following claim has been proved in \cite{helliar2024generalized}.

    \begin{claim}[\cite{helliar2024generalized}, Claims 5.2-5.4]\label{claim: sunflower}
    Assume $\mathscr{H}$ is $r$-uniform and $\{0,\ell\}$-intersecting where $0<\ell <r$.
    Let $U(\mathscr{H}) = \{u \in V(\mathscr{H}): d_{\mathscr{H}}(u)  \ge r^2\}$ and $\mathcal{C}(\mathscr{H}) = \{C \in \binom{V(\mathscr{H})}{\ell}: |\mathcal{S}_C| \ge r^2\}$. Then we have

    \noindent(a) $U(\mathscr{H})=\bigcup_{C\in \mathcal{C}(\mathscr{H})}C$,

    \noindent(b)  $C\cap C'=\emptyset$ for any distinct $C,C'\in \mathcal{C}(\mathscr{H}),$ and

    \noindent(c)  for every $A\in \mathscr{H}$ and every $C\in \mathcal{C}(\mathscr{H})$, we have either $C \subseteq A$ or $A\cap C=\emptyset$.
    \end{claim}

    Note that $\mathscr{H}_{G_k,F,W}^{r-\ell_1}$ is $(r-\ell_1)$-uniform and $\{0, \ell_2-\ell_1\}$-intersecting.
    Directly follows from the definitions, we have $U(G_k) = U(\mathscr{H}^{r-\ell_1}_{G_k,F,W})$ and $\mathcal{C}(G_k) = \mathcal{C}(\mathscr{H}^{r-\ell_1}_{G_k,F,W})$.
    By Claim \ref{claim: sunflower}, we know that $\mathcal{C}(G_k)$ forms a partition of $U(G_k)$ into $(\ell_2-\ell_1)$-subsets and thus $(\ell_2-\ell_1)\mid (r-\ell_1)$.

    It is worthy mentioning that if $\ell_2-\ell_1\nmid r-\ell_1$, $U(G_k) =\emptyset$ must hold.
    By (\ref{eq-1}), we have that $|\mathscr{H}_{G,F}^{r}| = O(n)$ which implies that $\Psi_r(n,F,\{\ell_1,\ell_2\})=O(n)$ in this case. 

    In the following, we assume $U(G_k)\neq \emptyset$ and prove the lemma by contradiction. Suppose $|\mathscr{H}^{r-\ell_1}_{G_k,F,W}|\geq cn^2$ for some constant $c$ when $n$ is sufficiently large.
    Assume $V(F) = \{v_1,v_2,\ldots,v_r\}$ and $W=\{w_1,w_2,\ldots,w_{\ell_1}\}$.
    By the way we define $W$, for any induced copy of $F$ in $G_k$ with vertex set $A$, we have $W\subseteq A$. Since $G_k[A]\cong F$, there exists a bijection $\phi_A$ from $V(F)$ to $A$ such that $uv \in E(F)$ if and only if $\phi_A(u)\phi_A(v) \in E(G_k[A])$.
    Given an $(r-\ell_1)$-partition of $V(G_k)\setminus W = V_{\ell_1+1} \cup V_{\ell_1+2} \cup \cdots \cup V_r$ and a permutation $\sigma$ over $[r]$, we say an induced copy of $F$  with vertex set $A \subseteq V(G_k)$ is \textbf{good}~(also say $A$ is good and denote $G_k[A]=A$ for short) if $\phi_A(v_{\sigma (i)}) = w_i, 1 \le i \le \ell_1$ and $\phi_A(v_{\sigma (i)}) \in V_{i}, \ell_1 < i \le r$.
    Now we uniformly randomly pick a permutation $\sigma'$ over $[r]$ and then uniformly random select an $(r-\ell_1)$-partition $V_{\ell_1+1}' \cup V_{\ell_1+2}' \cup \cdots \cup V_r' = V(G_k)\setminus W$. Given an induced copy of $F$  with vertex set $A \subseteq V(G_k)$, the probability that $A$ is good is a positive constant depending on $\ell_1$ and $r$, denoted by $p_1(\ell_1,r)$.
    Then there exist a permutation $\sigma$ and a partition $V(G_k)\setminus W = V_{\ell_1+1} \cup V_{\ell_1+2} \cup \cdots \cup V_r$ such that the number of good induced copies of $F$ is at least $cp_1(\ell_1,r)n^2$ by $|\mathscr{H}^{r-\ell_1}_{G_k,F,W}|\geq cn^2$. We denote the set consisted of the good induced copies of $F$ corresponding to $\sigma$ and the partition $V(G_k)\setminus W = V_{\ell_1+1} \cup V_{\ell_1+2} \cup \cdots \cup V_r$ by $\mathcal{F}_{good}$. Then $| \mathcal{F}_{good}|\ge cp_1(\ell_1,r)n^2$.
    In the following, we fix the permutation $\sigma$ and the partition $V(G_k)\setminus W = V_{\ell_1+1} \cup V_{\ell_1+2} \cup \cdots \cup V_r$.

    Let $F_0 = \{v_{\sigma(i)}\}_{1 \le i \le \ell_1}$.
    Now suppose that we have a partition of $V(F) = F_0 \cup F_1 \cup \cdots \cup F_{(r-\ell_1)/(\ell_2-\ell_1)}$ satisfying $|F_i| = \ell_2-\ell_1, 1 \le i \le (r-\ell_1)/(\ell_2-\ell_1)$.
    For $A \in \mathcal{F}_{good}$,  the bijection $\phi_A$ satisfies that $\phi_A(v_{\sigma(i)}) = w_i, 1 \le i \le \ell_1$.
    It shows that the image of $F_0$ under $\phi_A$ is $W$.
    Define $\phi_A(F_i) = \{\phi_A(u)\}_{u \in F_i}, 1 \le i \le (r-\ell_1)/(\ell_2-\ell_1)$.
    We say $A$ is \textbf{great} if $\phi_A(F_i) \in \mathcal{C}(G_k)$ for any $1 \le i \le (r-\ell_1)/(\ell_2-\ell_1)$.

    Now we uniformly randomly pick a partition of $V(F)\setminus F_0 = F_1' \cup \cdots \cup F_{(r-\ell_1)/(\ell_2-\ell_1)}'$ that satisfies $|F'_i| = \ell_2-\ell_1, 1 \le i \le (r-\ell_1)/(\ell_2-\ell_1)$. Then for every $A \in \mathcal{F}_{good}$, the probability that $A$ is great is a positive constant depending on $\ell_1,\ell_2,r$, denoted by $p_2(\ell_1,\ell_2,r)$.
    Then there exists a partition of $V(F)\setminus F_0 = F_1 \cup \cdots \cup F_{(r-\ell_1)/(\ell_2-\ell_1)}$ with $|F_i| = \ell_2-\ell_1, 1 \le i \le (r-\ell_1)/(\ell_2-\ell_1)$ such that the number of great induced copies of $F$ is at least $cp_1(\ell_1,r)p_2(\ell_1,\ell_2,r)n^2$ by $| \mathcal{F}_{good}|\ge cp_1(\ell_1,r)n^2$. We denote the set consisted of the great induced copies of $F$  corresponding to the  partition  $V(F)\setminus F_0 = F_1 \cup \cdots \cup F_{(r-\ell_1)/(\ell_2-\ell_1)}$ by $\mathcal{F}_{great}$.
    Then $|\mathcal{F}_{great}|\ge cp_1(\ell_1,r)p_2(\ell_1,\ell_2,r)n^2$.
    In the following discussion, we fix the partition of $F$ as well.

    We define an auxiliary graph $G^\star$ with vertex set $V(G^\star)=\mathcal{C}(G_k)$ and edge set
    $$E(G^\star)=\big\{ \{C,C'\}:~ C,C'\in {\mathcal{C}(G_k),C\cup C'\cup W\subseteq F \text{~ for some $F\in \mathcal{F}_{great}$}} \big\}.$$
    Then we define an auxiliary hypergraph $\mathscr{H}^\star$ with vertex set $V(\mathscr{H}^\star) = \mathcal{C}(G_k)$ and edge set
    \[
    E(\mathscr{H}^\star) = \left\{ B \in \binom{\mathcal{C}(G_k)}{(r-\ell_1)/(\ell_2-\ell_1)}:~  \left(\bigcup_{C \in B}C \right)\cup W \in  \mathcal{F}_{great} \right\}.
    \]
    Note that each $F \in \mathcal{F}_{great}$ corresponds to a hyperedge in $\mathscr{H}^\star$.
    Then we have that
    \begin{equation}\label{eq}
   |\mathscr{H}^\star| \ge cp_1(\ell_1,r)p_2(\ell_1,\ell_2,r)n^2.
\end{equation}

Recalling that when $F$ is a clique, the problem is solved.
The following claim is the key of the proof which connects the general $F$ to the clique.


    \begin{claim}\label{claim: K_s respond to hyperedge}
        If $A \subseteq V(G^\star)$ and $|A| = (r-\ell_1)/(\ell_2-\ell_1)$, then $A$ is a clique in $G^\star$ if and only if $A$ is a hyperedge in $\mathscr{H}^{\star}$.
    \end{claim}

 \noindent
    \textbf{Proof of Claim~\ref{claim: K_s respond to hyperedge}.}
The sufficiency holds  by the definition of $G^\star$ and $\mathscr{H}^\star$. So we just need to proof the necessity.

        Recalling that we have fixed $\sigma$, the partition of $V(G_k)\setminus W = V_{\ell_1+1}\cup \cdots\cup V_r$ and the partition of $V(F)\setminus F_0 = F_1\cup \cdots\cup F_{(r-\ell_1)/(\ell_2-\ell_1)}$ with each $|F_i| = \ell_2-\ell_1, 1 \le i \le (r-\ell_1)/(\ell_2-\ell_1)$.

        For a vertex $u \in V(G_k)\setminus W$, let $\psi(u)$ be the index $j$ such that $u \in V_j$.
        For a set $C\in \mathcal{C}(G_k)$, write $\psi(C) = \{\psi(u)\}_{u \in C}$.
        For any $A  \in \mathcal{F}_{great}$ and any vertex $u \in A \setminus W$, we have $|(A \setminus W) \cap V_{\psi(u)}|=1$ since $A$ is good.
        Note that $u \in V_{\psi(u)}$.
        Then for any distinct $u_1,u_2\in A \setminus W$, we have $\psi(u_1)\neq \psi(u_2)$.
        Hence, if $C\in \mathcal{C}(G_k)$ is a subset of $A$, then $|\psi(C)|=\ell_2-\ell_1$.
         This means that every vertex of $C$ belongs to different parts among $V_{\ell_1+1}\cup\cdots \cup V_r$.
        If two elements $C_1,C_2\in \mathcal{C}(G_k)$ are adjacent in $G^\star$, then there exists $A_{C_1,C_2} \in \mathcal{F}_{great}$ such that $C_1\cup C_2\cup W \subseteq A_{C_1,C_2}$.
        By the same argument, we can derive $\psi(C_1) \cap  \psi(C_2) = \emptyset$.

       Assume $C_1,\ldots,C_{(r-\ell_1)/(\ell_2-\ell_1)}\in V(G^{\star})$ form a clique in $G^{\star}$. Let $B=\bigcup_{i=1}^{(r-\ell_1)/(\ell_2-\ell_1)}C_i$.
        Then $\{\psi(C_i)\}_{1 \le i \le (r-\ell_1)/(\ell_2-\ell_1)}$ is a partition of $\{\ell_1+1,\ldots,r\}$.
        This means that $ \left|B\cap V_{j}\right|=1$ for every $j=\ell_1+1,\ldots,r$.
        Once we know that a vertex $u \in B$ satisfies that $\psi(u) = j \in \{\ell_1+1,\ldots,r\}$, then we can conclude that $\{u\} = B\cap V_{j}$.
        This will be an important fact in our proof.

        Now we prove $G_k\left[B\cup W\right]\cong F$.
        We construct a bijection $\phi'$ from $V(F)$ to $B\cup W$.
        Let $\phi'(v_{\sigma(j)}) = w_j, 1 \le j \le \ell_1$ and $\phi'(v_{\sigma(j)}) = z_j$ where $\{z_j \} = B \cap V_j$ for $\ell_1 < j \le r$.
        The mapping is well-defined by our previous arguments.
        Now we only need to prove $v_{\sigma(i)}v_{\sigma(j)} \in E(F)$ if and only if $z_iz_j\in E\left(G_k\left[B\cup W\right]\right)$ for any $i \neq j$.
        Without loss of generality, assume $z_i \in C_1\cup W$ and $z_j \in C_1\cup C_2\cup W$.

        Since $A_{C_1,C_2} $ is an induced copy of $F$, there is $\phi_{A_{C_1,C_2}}$ such that $\phi_{A_{C_1,C_2}}(v_{\sigma(i)})\phi_{A_{C_1,C_2}}(v_{\sigma(i)}) \in E(G_k)$ if and only if $v_{\sigma(i)}v_{\sigma(j)} \in E(F)$.
        If $i \le \ell_1$, then $\phi_{A_{C_1,C_2}}(v_{\sigma(i)}) = w_i = z_i$ since $A_{C_1,C_2}$ is good.
        If $ \ell_1 < i \le r$, we have $\phi_{A_{C_1,C_2}}(v_{\sigma(i)}) \in V_i$ and note that $\psi(z_i) = i$ which means $z_i$ is the only vertex in $A_{C_1,C_2} \cap V_i$. Thus $\phi_{A_{C_1,C_2}}(v_{\sigma(i)}) = z_i$.
        As a result, we derive that $\phi_{A_{C_1,C_2}}(v_{\sigma(i)}) = z_i$ and similarly we can derive $\phi_{A_{C_1,C_2}}(v_{\sigma(j)}) = z_j$.
        Thus $v_{\sigma(i)}v_{\sigma(j)} \in E(F)$ if and only if $z_iz_j\in E(G_k[B\cup W])$ for any $i \neq j$. So $G_k[B  \cup W]\cong F$.

        By the definition, we can easily verify that $B\cup W\in \mathcal{F}_{good}$.
        We then prove $B\cup W\in \mathcal{F}_{great}$. For convenience, we write $p = (r-\ell_1)/(\ell_2-\ell_1)$.
        For any $F_i$, $1 \le i \le p$, let $\eta(F_i) = \{ j: v_{\sigma(j)} \in F_i\}$. It is easy to verify that $\eta(F_i) = \{\psi(u)\}_{u \in \phi'(F_i)}$.
        Since $B\cup W$ is good, $\{\eta(F_i)\}_{1 \le i \le p}$ is a partition of $\{\ell_1+1,\ldots, r\}$.

        Since each $C_i$ is contained in some $A_{C_i} \in \mathcal{F}_{great}$,  there is  $F_{C_i} \in \{F_1,F_2,\ldots,F_{p}\}$ such that $\{\phi_{A_{C_i}}(v)\}_{v\in F_{C_i}} = C_i$.
        Then we have that
        \[
        \psi(C_i) = \{\psi(\phi_{A_{C_i}}(v_{\sigma(j)})): v_{\sigma(j)} \in F_{C_i}\} = \{j: v_{\sigma(j)} \in F_{C_i}\} = \eta(F_{C_i}).
        \]
        That is, $\psi(C_i) \in \{\eta(F_j)\}_{1 \le j \le p}$ for every $1 \le i \le p$. Since $\{\psi(C_i)\}_{1 \le i \le p}$ is also a partition of $\{\ell_1+1,\ldots, r\}$,
        we have that $\{\psi(C_i)\}_{1 \le i \le p} = \{\eta(F_j)\}_{1 \le j \le p}$.
        Now we claim that for any $i$, $\phi'(F_i) \in \{C_1,\ldots,C_{p}\}$; otherwise, $\eta(F_i) = \{\psi(u)\}_{u \in \phi'(F_i)} \notin \{\psi(C_i)\}_{1 \le j \le p}$, a contradiction.
        Therefore, $\{C_1,C_2,\ldots,C_{(r-\ell_1)/(\ell_2-\ell_1)}\}$ is a hyperedge in $\mathscr{H}^\star$.    \hfill $\blacksquare$ \par

    \begin{claim}\label{claim: 0,1 intersecting}
        $G^\star$ is $(K_{(r-\ell_1)/(\ell_2-\ell_1)},\{0,1\})$-intersecting.
    \end{claim}

    \noindent
    \textbf{Proof of Claim~\ref{claim: 0,1 intersecting}.}
    By Claim~\ref{claim: K_s respond to hyperedge}, we have that $\mathscr{H}^\star = \mathscr{H}_{G,K_{(r-\ell_1)/(\ell_2-\ell_1)}}^{(r-\ell_1)/(\ell_2-\ell_1)}$.
    Each hyperedge in $\mathscr{H}^\star$~(together with $W$) corresponds to an induced copy of $F$ in $G_k$.
    Since $G_k$ is $(F,L)$-intersecting, by the definition, we have that $\mathscr{H}^\star$ is $\{0,1\}$-intersecting.
    As a whole, we conclude that $G^\star$ is $(K_{(r-\ell_1)/(\ell_2-\ell_1)},\{0,1\})$-intersecting.
    \hfill $\blacksquare$ \par

    Now let us finish the proof of the lemma.
    Note that $|V(G^\star)| = |\mathcal{C}(G_k)| \le \frac{|V(G_k)| - \ell_1}{\ell_2-\ell_1} \le n$ from Claim~\ref{claim: sunflower}. By Claim \ref{claim: K_s respond to hyperedge}, $
    |\mathscr{H}^\star|=N_{ind}(G^\star, K_{(r-\ell_1)/(\ell_2-\ell_1)})$.
    By Claim \ref{claim: 0,1 intersecting} and (\ref{eq: general of (6,3)}), we have
    $
    |\mathscr{H}^\star| = o(n^2),
    $ a contradiction with (\ref{eq}).
             Here we finish the proof of Lemma~\ref{lem: L=l1,l2}.
   \end{proof}

Lemma~\ref{lem: L=l1,l2} proves the base case of Theorem~\ref{thm: the main theorem} when $|L|=s=2$.
 For more general $L$ and $r$ with similar restrictions, we will still employ the induction on both $r$ and $|L|$ as in \cite{helliar2024generalized} and \cite{zhao2025counting}. We need Lemma~\ref{lem: l1,l2,l3} from~\cite{zhao2025counting} and a consequence of F{\"u}redi's fundamental structure theorem from~\cite{furedi1983finite}.

\begin{lemma}[Zhao-Zhang \cite{zhao2025counting}]\label{lem: l1,l2,l3}
    Let $0\le \ell_1<\ell_2<\ell_3<r$ with $r-\ell_3=\ell_3-\ell_2\neq \ell_2 - \ell_1$. Then any $n$-vertex $\{\ell_1,\ell_2,\ell_3\}$-intersecting $r$-graph $\mathscr{H}$ satisfies
    $$|\mathscr{H}|\leq \binom{n}{2}=O(n^2).$$
    That is, $\Phi_r(n, \{\ell_1,\ell_2,\ell_3\}) = O(n^2)$\footnote{In~\cite{zhao2025counting}, they only prove the case when $\ell_1=0$. Here when $\ell_1 >0$, it can be easily derived from Theorem~\ref{thm: deza-E-F}.}.
\end{lemma}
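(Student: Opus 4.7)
The plan is to reduce the general statement to the special case $\ell_1=0$, which is precisely the result Zhao and Zhang prove in~\cite{zhao2025counting}; given the footnote, all that remains is a short bookkeeping argument built on Theorem~\ref{thm: deza-E-F}. If $\ell_1=0$ there is nothing to do beyond citing~\cite{zhao2025counting}, so assume $\ell_1\ge 1$. We may further assume $|\mathscr{H}|\ge C(r,L)\,n^{2}$ where $C(r,L)$ is the constant from Theorem~\ref{thm: deza-E-F}, since otherwise the desired $O(n^2)$ conclusion is already in hand and the exact bound $\binom{n}{2}$ follows for all sufficiently large $n$ (the finitely many small values of $n$ can be absorbed into the constant).

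Under this assumption, the ``moreover'' clause of Theorem~\ref{thm: deza-E-F} yields a common intersection of size at least $\min L=\ell_1$, so we fix any $W\subseteq\bigcap_{A\in\mathscr{H}}A$ with $|W|=\ell_1$ and pass to the shifted family $\mathscr{H}':=\{A\setminus W : A\in\mathscr{H}\}$. Since $W\subseteq A$ for every $A\in\mathscr{H}$, the map $A\mapsto A\setminus W$ is a bijection, so $|\mathscr{H}'|=|\mathscr{H}|$. The new family $\mathscr{H}'$ is $(r-\ell_1)$-uniform on $n-\ell_1$ vertices, and because $|A'\cap B'|=|A\cap B|-\ell_1$ for corresponding pairs, it is $\{0,\ell_2-\ell_1,\ell_3-\ell_1\}$-intersecting.

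The remaining bookkeeping is to verify that this new triple fits the hypotheses of the $\ell_1=0$ case. With $r'=r-\ell_1$, $\ell_2'=\ell_2-\ell_1$, $\ell_3'=\ell_3-\ell_1$, the arithmetic-progression condition transfers: $r'-\ell_3'=r-\ell_3=\ell_3-\ell_2=\ell_3'-\ell_2'$; and the non-AP condition becomes $\ell_2'-0=\ell_2-\ell_1\neq\ell_3-\ell_2=\ell_3'-\ell_2'$, which is exactly our hypothesis. Applying the $\ell_1=0$ bound of Zhao-Zhang~\cite{zhao2025counting} to $\mathscr{H}'$ then gives $|\mathscr{H}|=|\mathscr{H}'|\le\binom{n-\ell_1}{2}\le\binom{n}{2}$. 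The only real obstacle in the lemma is the $\ell_1=0$ case itself, which we simply cite; that argument rests on Füredi's $\Delta$-system theorem~\cite{furedi1983finite} to extract a bounded-size kernel and then uses the arithmetic relation $r-\ell_3=\ell_3-\ell_2\neq\ell_2$ to rule out super-quadratic growth of the kernel-indexed sunflower decomposition.
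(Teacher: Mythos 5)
Your reduction is essentially the route the paper itself intends: the $\ell_1=0$ case is simply quoted from Zhao--Zhang, and the footnote's ``easily derived from Theorem~\ref{thm: deza-E-F}'' is exactly your argument --- use the ``moreover'' clause to extract a common core $W$ with $|W|=\ell_1$, pass to the shifted $(r-\ell_1)$-uniform family, check that it is $\{0,\ell_2-\ell_1,\ell_3-\ell_1\}$-intersecting and that the conditions $r'-\ell_3'=\ell_3'-\ell_2'\neq\ell_2'$ transfer, then invoke the $\ell_1=0$ bound. That part of your write-up is correct and matches the paper.

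One inaccuracy worth flagging: in the branch $|\mathscr{H}|<C(r,L)n^2$ you assert that the exact bound $\binom{n}{2}$ ``follows for all sufficiently large $n$,'' but $C(r,L)$ need not be at most $1/2$, so this dichotomy only delivers $|\mathscr{H}|\le\max\left\{C(r,L)n^2,\binom{n}{2}\right\}=O(n^2)$ rather than the literal inequality $|\mathscr{H}|\le\binom{n}{2}$ in the intermediate regime $\binom{n}{2}<|\mathscr{H}|<C(r,L)n^2$ (and ``absorbing small $n$ into the constant'' only helps the $O(n^2)$ form, which has a constant to absorb into). This is harmless for the paper, which only ever uses the lemma as $\Phi_r(n,\{\ell_1,\ell_2,\ell_3\})=O(n^2)$ in the bound (\ref{eq: l1,l2,l3}), but the parenthetical justification as written does not prove the stated binomial bound. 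Your closing description of how the $\ell_1=0$ case is proved in Zhao--Zhang is speculation not needed for the argument and should either be dropped or attributed as such.
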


 \begin{theorem}[F{\"u}redi~\cite{furedi1983finite}]\label{thm: furedi}
     For $r\geq 2$ and $L\subseteq [0,r-1]$, there exists a positive constant $c=c(r)$ such that every $L$-intersecting $r$-graph $\mathcal{F}$ contains a subhypergraph $\mathcal{F}^\star\subseteq\mathcal{F}$ with $|\mathcal{F}^\star|\geq c|\mathcal{F}|$ satisfying all of the following properties.

     (i) The families $\mathcal{I}(H):=\{H\cap H':~H'\in \mathcal{F}^\star\}$ are isomorphic for all $H\in \mathcal{F}^\star$.

     (ii) For any $A\in \mathcal{I}(H)$ with $H\in \mathcal{F}^\star$, $A$ is the core of an $(r+1)$-sunflower in $\mathcal{F}^\star$.

     (iii) For any $H\in \mathcal{F}^\star$, $\mathcal{I}(H)$ is closed under intersection, i.e., if $A_1,A_2\in \mathcal{I}(H)$ then $A_1\cap A_2\in \mathcal{I}(H)$.

     (iv) For any $A\in \mathcal{I}(H)$ with $H\in \mathcal{F}^\star$, $|A|\in L$.

     (v) For any different $A,A'\in\bigcup_{H\in \mathcal{F}^\star}\mathcal{I}(H)$, we have $|A\cap A'|\in L$.
 \end{theorem}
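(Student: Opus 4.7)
My plan is to prove Füredi's structure theorem by iteratively refining $\mathcal{F}$ via the Erdős–Rado sunflower lemma together with pigeonhole on the ``trace type'' of each hyperedge. Recall the Erdős–Rado lemma: any $r$-uniform family of size greater than $r!(k-1)^r$ contains a sunflower with $k$ petals. Since we only need the final constant $c(r)$ to depend on $r$, and since all relevant structural parameters (subsets of an $r$-set, isomorphism types of hypergraphs on $r$ vertices, etc.) are bounded by functions of $r$ alone, each refinement step can afford to lose a constant factor, and the whole argument will terminate in a bounded (function of $r$) number of rounds.

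The first step is to establish (i), (iv), (v). For each $H\in\mathcal{F}$ define the trace family $\mathcal{I}(H):=\{H\cap H':H'\in\mathcal{F}\}$, a hypergraph on the $r$-set $H$. Properties (iv) and (v) are automatic because $\mathcal{F}$ is $L$-intersecting (and intersections of intersections of edges of $\mathcal{F}$ can be obtained by intersecting suitable pairs in $\mathcal{F}$, once we add a closure step below). Labelling $H$ by $[r]$ gives at most $2^{2^r}$ isomorphism types for $\mathcal{I}(H)$, so by pigeonhole there is a subfamily $\mathcal{F}_1\subseteq\mathcal{F}$ of size at least $2^{-2^r}|\mathcal{F}|$ in which all the trace hypergraphs $\mathcal{I}(H)$ are pairwise isomorphic.

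The second step is to enforce (ii), the sunflower property. Inside $\mathcal{F}_1$, call a pair $(H,A)$ with $A\in\mathcal{I}(H)$ \emph{bad} if $A$ is not the core of an $(r+1)$-sunflower in $\mathcal{F}_1$. Equivalently, if the family $\{H'\in\mathcal{F}_1: H'\cap H\supseteq A\}$ contains no $(r+1)$-sunflower with core exactly $A$, then by the Erdős–Rado lemma applied to the $(r-|A|)$-uniform family $\{H'\setminus A:H'\cap H\supseteq A\}$, this set has size bounded by a function of $r$ alone. Delete all such ``bad'' $H'$ from $\mathcal{F}_1$; since each $H$ contributes at most $2^r$ traces $A$, and each bad $A$ kills $O_r(1)$ edges, this deletion removes at most a constant-times-$|\mathcal{F}_1|/|\mathcal{F}_1|$-times density — so we must argue more carefully: instead, delete the set $D\subseteq \mathcal{F}_1$ of edges that are the ``loose petals'' for some bad pair. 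A double-counting of $(H,H',A)$ with $A=H\cap H'$ bad shows $|D|\le 2^r\cdot r\cdot|\mathcal{F}_1|\cdot(\text{constant})^{-1}$, so by choosing the constants in the sunflower lemma appropriately, a positive fraction of $\mathcal{F}_1$ survives. Then re-apply the pigeonhole of the first step to restore property (i) on the survivors.

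For (iii), the closure-under-intersection condition, observe that if some $A_1,A_2\in\mathcal{I}(H)$ satisfy $A_1\cap A_2\notin \mathcal{I}(H)$, then the trace hypergraph $\mathcal{I}(H)$ strictly enlarges when we pass to its intersection closure. Since $\mathcal{I}(H)\subseteq 2^H$ has at most $2^r$ elements, the closure operation can only be applied $2^r$ times before stabilising. At each round, run the two-step refinement above with the target trace family replaced by its intersection closure; the density loss per round is bounded by a function of $r$, and after at most $2^r$ rounds the trace becomes closed under intersection while remaining isomorphic across all surviving $H$. The main obstacle is the interleaving: enforcing (ii) on round $i$ can destroy the uniformity (i) and the closure (iii) of previous rounds, so one must show the process converges. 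This is handled by defining a monotone invariant — e.g.\ the isomorphism type of $\mathcal{I}(H)$ together with the multiset of sunflower sizes — which can only take finitely many values bounded by a function of $r$; thus only $O_r(1)$ rounds occur, the total multiplicative loss is $c(r)>0$, and the resulting $\mathcal{F}^\star$ satisfies all five properties simultaneously.
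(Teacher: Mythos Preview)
The paper does not prove this theorem at all: it is quoted from F\"uredi's 1983 paper and used as a black box in the proof of Lemma~\ref{lem: reduction lemma}. So there is no ``paper's own proof'' to compare against; your task was effectively to reconstruct F\"uredi's argument.

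Your outline has the right architecture---iterated sunflower/pigeonhole refinement---but there are two genuine gaps. First, you assert that (iv) and (v) ``are automatic because $\mathcal{F}$ is $L$-intersecting''. Property (iv) is indeed immediate, but (v) is not: if $A=H_1\cap H_1'$ and $A'=H_2\cap H_2'$ then $A\cap A'$ is a fourfold intersection, and there is no a~priori reason its size lies in $L$. The standard fix uses property~(ii): once each $A$ is the core of an $(r+1)$-sunflower, any $r$-set $A'$ misses some petal $H_i$ outside the core, giving $A\cap A'=H_i\cap A'$; applying this twice yields $A\cap A'=H_i\cap H_j'$ for two genuine hyperedges, whence $|A\cap A'|\in L$. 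So (v) is a \emph{consequence} of (ii), not something you get for free.

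Second, your deletion/iteration scheme for (ii) and (iii) is not well-founded. You write that deleting ``loose petals'' removes only a bounded fraction by double-counting, but the displayed bound $|D|\le 2^r\cdot r\cdot|\mathcal{F}_1|\cdot(\text{constant})^{-1}$ is asserted, not derived, and the ``constant'' here must beat the factor $2^r\cdot r$ for anything to survive. More seriously, you correctly flag that enforcing (ii) can destroy (i), and enforcing (iii) can destroy both---but the proposed fix, a ``monotone invariant'' taking boundedly many values, is never actually defined, and it is not clear that any natural invariant (say, the isomorphism type of the closed trace) moves monotonically under your deletion steps. F\"uredi's actual proof organises the induction differently (via a rank function on the lattice of intersections) precisely to avoid this circularity; as written your iteration could oscillate.
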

 The following lemma gives the reduction of $\Psi_r(n,F,L)$.
\begin{lemma}\label{lem: reduction lemma}
    For $L=\{\ell_1,\dots,\ell_s\}\subseteq[0,r-1]$ with $0\leq \ell_1<\dots<\ell_s<r$, if $\Psi_r(n,F,L)\geq C(r,L)n^{s-1}$ where $C(r,L)$ is the constant in Theorem \ref{thm: deza-E-F}, we have
    \begin{equation*}
    \begin{aligned}
        &\Psi_r(n,F,L)  = \Psi_r(n,F,L,\ell_1) \\
        \leq& \frac{1}{c(r)}\min_{1\le i\le s}\max\{\Phi_r(n,L\setminus\{\ell_i\}),\Phi_{\ell_i}(n,\{\ell_1,\dots,\ell_{i-1}\})  \Psi_r(n,F,\{\ell_i,\dots,\ell_s\},\ell_i)\},
    \end{aligned}
    \end{equation*}
    where $c(r)$ is the constant stated in Theorem \ref{thm: furedi}.
\end{lemma}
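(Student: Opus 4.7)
The equality $\Psi_r(n,F,L) = \Psi_r(n,F,L,\ell_1)$ is immediate from Theorem~\ref{thm: deza-E-F}: for any extremal $(F,L)$-intersecting graph $G$ with $\mathscr{H} := \mathscr{H}^r_{G,F}$, the hypothesis $|\mathscr{H}| \geq C(r,L)n^{s-1}$ forces $\bigl|\bigcap_{A \in \mathscr{H}} A\bigr| \geq \min L = \ell_1$, so $G$ already witnesses $\Psi_r(n,F,L,\ell_1)$. For the main inequality, apply F\"uredi's theorem (Theorem~\ref{thm: furedi}) to $\mathscr{H}$ to extract a structured subhypergraph $\mathcal{F}^\star \subseteq \mathscr{H}$ with $|\mathcal{F}^\star| \geq c(r)|\mathscr{H}|$ satisfying (i)--(v). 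Properties (i) and (iv) ensure that the size spectrum $S := \{|A| : A \in \mathcal{I}(H)\} \subseteq L$ is the same for every $H \in \mathcal{F}^\star$.

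\textbf{Case analysis in $i$.} Fix $i \in [s]$ and split on whether $\ell_i \in S$. If $\ell_i \notin S$, then every pairwise intersection $H_1 \cap H_2 \in \mathcal{I}(H_1)$ has size in $S \subseteq L \setminus \{\ell_i\}$, making $\mathcal{F}^\star$ an $(L \setminus \{\ell_i\})$-intersecting $r$-graph on $n$ vertices, so $|\mathcal{F}^\star| \leq \Phi_r(n, L \setminus \{\ell_i\})$. If instead $\ell_i \in S$, let $\mathcal{A}_i := \bigcup_{H \in \mathcal{F}^\star}\{A \in \mathcal{I}(H) : |A| = \ell_i\}$. By property (v), distinct $A,A' \in \mathcal{A}_i$ satisfy $|A \cap A'| \in L \cap [0,\ell_i-1] = \{\ell_1,\dots,\ell_{i-1}\}$, so $\mathcal{A}_i$ is a $\{\ell_1,\dots,\ell_{i-1}\}$-intersecting $\ell_i$-graph on $V(G)$ and $|\mathcal{A}_i| \leq \Phi_{\ell_i}(n,\{\ell_1,\dots,\ell_{i-1}\})$. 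For each $A \in \mathcal{A}_i$, the members of $\mathcal{F}_A^\star := \{H \in \mathcal{F}^\star : A \subseteq H\}$ share $A$ as a common core and pairwise intersect in $\{\ell_i,\dots,\ell_s\}$, so $|\mathcal{F}_A^\star| \leq \Psi_r(n,F,\{\ell_i,\dots,\ell_s\},\ell_i)$. Since each $H \in \mathcal{F}^\star$ contains at least one $\ell_i$-element member of $\mathcal{I}(H)$ (recall $\ell_i \in S$), we get $\mathcal{F}^\star = \bigcup_{A \in \mathcal{A}_i} \mathcal{F}_A^\star$ and hence $|\mathcal{F}^\star| \leq |\mathcal{A}_i| \cdot \max_A |\mathcal{F}_A^\star|$. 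Combining the two cases and then taking $\min_i$, dividing by $c(r)$ delivers the stated inequality.

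\textbf{Main obstacle.} The subtlest step is the bound $|\mathcal{F}_A^\star| \leq \Psi_r(n,F,\{\ell_i,\dots,\ell_s\},\ell_i)$. Strictly, $\Psi_r(n,F,L',\ell)$ maximizes over $n$-vertex graphs whose \emph{entire} set of induced $F$-copies is $L'$-intersecting with common core of size $\geq \ell$, whereas $\mathcal{F}_A^\star$ is only a subfamily of induced $F$-copies inside $G$. Resolving this requires either producing an auxiliary $n$-vertex graph whose induced $F$-copies realize (at least) $\mathcal{F}_A^\star$ with the prescribed intersection structure, or verifying that the defining full-graph maximum of $\Psi_r$ coincides with the subfamily maximum in this regime. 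Once this is in place, the three structural ingredients--Deza--Erd\H{o}s--Frankl to locate a common core, F\"uredi's theorem to standardize the intersection pattern, and the dichotomy on the size spectrum $S$--assemble cleanly into the recursive bound.
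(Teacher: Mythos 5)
Your proposal follows the paper's own route essentially line by line: the equality $\Psi_r(n,F,L)=\Psi_r(n,F,L,\ell_1)$ via Theorem~\ref{thm: deza-E-F}, the extraction of $\mathcal{F}^\star$ via F\"uredi's theorem, the dichotomy according to whether $\ell_i$ occurs as an intersection size (the paper phrases this via property (i) exactly as you do via the common spectrum $S$), and the bound $|\mathcal{A}_i|\le \Phi_{\ell_i}(n,\{\ell_1,\dots,\ell_{i-1}\})$ via property (v). Up to that point there is nothing to object to. But the step you yourself single out as the ``main obstacle,'' namely $|\mathcal{F}_A^\star|\le \Psi_r(n,F,\{\ell_i,\dots,\ell_s\},\ell_i)$, is left unproved: you name two possible ways to resolve it and execute neither. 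Since this inequality is exactly what drives the recursion, the proposal as written does not prove the lemma; it is a genuine (if self-acknowledged) gap rather than a cosmetic omission.

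For comparison, the paper closes this step by the first of your two suggested routes: for each core $A$ it passes to the auxiliary induced subgraph $G_A:=G\left[\bigcup_{H\in\mathcal{F}^\star(A)}V(H)\right]$ (your $\mathcal{F}_A^\star$ is the paper's $\mathcal{F}^\star(A)$), notes that every member of $\mathcal{F}^\star(A)$ is an induced copy of $F$ in $G_A$ containing $A$, and asserts that $G_A$ is $(F,\{\ell_i,\dots,\ell_s\})$-intersecting with all its induced copies sharing the $\ell_i$-set $A$, which gives $|\mathcal{F}^\star(A)|\le N_{ind}(G_A,F)\le \Psi_r(n,F,\{\ell_i,\dots,\ell_s\},\ell_i)$. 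Your instinct that this is the delicate point is fair: the paper's one-sentence justification does not discuss induced copies of $F$ that happen to lie inside $V(G_A)$ without containing $A$, which is precisely the subtlety you raise about subfamily versus full-graph maxima. Nevertheless, the paper's proof \emph{is} this auxiliary-graph move; to match it you would need to actually construct $G_A$ and argue why its induced $F$-copies have the required intersection structure and common core, rather than only flagging that something of this sort is needed.
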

\begin{proof}
    It suffices to prove that for any $i \in [s]$,

    \begin{equation*}
    \begin{aligned}
        &\Psi_r(n,F,L)  = \Psi_r(n,F,L,\ell_1) \\
        \leq& \frac{1}{c(r)}\max\{\Phi_r(n,L\setminus\{\ell_i\}),\Phi_{\ell_i}(n,\{\ell_1,\dots,\ell_{i-1}\})  \Psi_r(n,F,\{\ell_i,\dots,\ell_s\},\ell_i)\}.
    \end{aligned}
    \end{equation*}
    In the following, let $i$ be a fixed integer in $[s]$.
    Let $G$ be an $(F,L)$-intersecting graph with $n$ vertices such that $N_{ind}(G,F)=\Psi_r(r,F,L)$. Since $\Psi_r(n,F,L)\geq C(r,L)n^{s-1}$, by Theorem \ref{thm: deza-E-F}, there is $W\subseteq \bigcap_{\text{$A\in \mathscr{H}_{G,F}^{r}$}}A$ with $|W|=\ell_1$.
    Therefore, we have $\Psi_r(n,F,L,\ell_1)=\Psi_r(n,F,L)$.
    Let $\mathcal{F}=\mathscr{H}^r_{G,F}$. Then we have $|\mathcal{F}|=\Psi_r(r,F,L,\ell_1)$. Applying Theorem \ref{thm: furedi} to $\mathcal{F}$, we can find a subhypergraph $\mathcal{F}^\star\subseteq \mathcal{F}$ satisfying all of the properties in Theorem \ref{thm: furedi}.
    If there is no $A\in \bigcup_{H\in \mathcal{F}^\star}\mathcal{I}(H)$ such that $|A|=\ell_i$, then by the definition of $\mathcal{I}(H)$, we have that $\mathcal{F}^\star$ is $L\setminus\{\ell_i\}$-intersecting. So $|\mathcal{F}|\leq c(r)^{-1}|\mathcal{F}^\star|\leq c(r)^{-1}\Phi_r(n,L\setminus\{\ell_i\})$ and we are done.

    Thus, by Theorem \ref{thm: furedi} ($i$), we may assume that for every $H\in \mathcal{F}^\star$ there exists some $A\in \mathcal{I}(H)$ of cardinality $\ell_i$.
    Let $\mathcal{A}^i_H:=\{A\in \mathcal{I}(H):~|A|=\ell_i\}$. Then $\mathcal{A}^i_H\not=\emptyset$  for each $H\in \mathcal{F}^\star$. By Theorem \ref{thm: furedi} ($v$), it is clear that $\bigcup_{H\in \mathcal{F}^\star}\mathcal{A}^i_H$ is an $\{\ell_1,\dots,\ell_{i-1}\}$-intersecting $\ell_i$-graph and hence
    \begin{equation}
        \left|\bigcup_{H\in \mathcal{F}^\star}\mathcal{A}^i_H \right|\leq \Phi_{\ell_i}(n,\{\ell_1,\dots,\ell_{i-1}\}).
    \end{equation}
    For $A\in \bigcup_{H\in \mathcal{F}^\star}\mathcal{A}^i_H$, let $\mathcal{F}^\star(A)$ be the subhypergraph of $\mathcal{F}^\star$ with all edges contained $A$.
    Then $G[\bigcup_{F\in\mathcal{F}^\star(A)}V(F)]$ is $(F,\{\ell_i,\dots,\ell_s\})$-intersecting.
    Since each hyperedge in $\mathcal{F}^\star(A)$ is an induced copy of $F$ in $G$, we use the notation $F$ here to represent the hyperedges.
    So $|\mathcal{F}^\star(A)|\leq \Psi_r(n,F,\{\ell_i,\dots,\ell_s\},|A|)=\Psi_r(n,F,\{\ell_i,\dots,\ell_s\},\ell_i)$.

    Since for any $H\in \mathcal{F}^\star$, $\mathcal{A}^i_H$ contains at least one $\ell_i$-set, we have
    \begin{equation*}
        \begin{aligned}
            \left|\mathcal{F}^\star \right|\leq& \left| \bigcup_{A\in\bigcup_{H\in \mathcal{F}^\star}\mathcal{A}^i_H}\mathcal{F}^\star(A) \right|\leq \left| \bigcup_{H\in \mathcal{F^\star}}\mathcal{A}^i_H \right|\cdot\Psi_r(n,F,\{\ell_i,\dots,\ell_s\},\ell_i)\\
            \leq & \Phi_{\ell_i}(n,\{\ell_1,\dots,\ell_{i-1}\}) \cdot \Psi_r(n,F,\{\ell_i,\dots,\ell_s\},\ell_i).
        \end{aligned}
    \end{equation*}
    Then we finish the prove by $|\mathcal{F}|\leq c(r)^{-1}|\mathcal{F}^\star|$.
    \end{proof}


    \noindent
    {\bf Proof of Theorem \ref{thm: the main theorem}:}

    We proof the theorem by induction on $|L|$.
    When $|L|=2$, it is proved in Lemma~\ref{lem: L=l1,l2}.
    When $|L|=s\geq 3$, we assume that the theorem holds for smaller $|L|$.

    Since $\ell_1,\dots,\ell_s,r$ do not form an arithmetic progression,  either $r-\ell_s\neq \ell_s-\ell_{s-1}$ or there exists $i\in [s-2]$ such that $r-\ell_s=\ell_s-\ell_{s-1}=\dots=\ell_{i+2}-\ell_{i+1}\neq \ell_{i+1}-\ell_i$. Let $G$ be a $(F,L)$-intersecting graph with maximum number of induced copies of $F$, that is, $N_{ind}(G,F)=\Psi_r(n,F,L)$. To prove the upper bound, we may assume $\Psi_r(n,F,L)\geq C(r,L)n^{s-1}$ where $C(r,L)$ is the constant in Theorem \ref{thm: deza-E-F}. Otherwise, we are done.

    \begin{mycase}

        \case  $r-\ell_s\neq \ell_{s}-\ell_{s-1}$. In this case,
        by Lemma \ref{lem: reduction lemma}, we have
        \begin{equation*}
            \begin{aligned}
                \Psi_r(n,F,L)\leq &\frac{1}{c(r)}\max\big\{\Phi_r(n,F,L\setminus\{\ell_{s-1}\}),\Phi_{\ell_{s-1}}(n,\{\ell_1,\dots,\ell_{s-2}\})\Psi_r(n,F,\{\ell_{s-1},\ell_s\},\ell_{s-1})\big\}\\
                \leq &c(r)^{-1}\big\{n^{s-1},n^{s-2}\Psi_r(n,F,\{\ell_{s-1},\ell_s\},\ell_{s-1})\big\}.
            \end{aligned}
        \end{equation*}
        It follows from Lemma \ref{lem: L=l1,l2} that $\Psi_r(n,F,\{\ell_{s-1},\ell_s\},\ell_{s-1})=o(n^2)$, and if $\ell_s-\ell_{s-1}\nmid r-\ell_{s-1}$, we have $\Psi_r(n,F,\{\ell_{s-1},\ell_s\},\ell_{s-1})=O(n)$.
    Thus, $\Psi_r(n,F,L)=o(n^s)$ and if $r-\ell_{s-1}\nmid \ell_2-\ell_1$, we have $\Psi_r(n,F,L)=O(n^{s-1})$ .

    \case There exists $i\in [s-2]$ such that $r-\ell_s=\ell_s-\ell_{s-1}=\dots=\ell_{i+2}-\ell_{i+1}\neq \ell_{i+1}-\ell_i$.
   By Lemma \ref{lem: reduction lemma}, we have
    \begin{equation*}
        \begin{aligned}
            \Psi_r(n,F,L)\leq& c(r)^{-1}\max\{\Phi_r(n,L\setminus\{\ell_i\}),\Phi_{\ell_i}(n,\{\ell_1,\dots,\ell_{i-1}\}) \Psi_r(n,F,\{\ell_i,\dots,\ell_s\},\ell_i)\}\\
            \leq &
            c(r)^{-1}\max\{n^{s-1},n^{i-1}\Psi_r(n,F,\{\ell_i,\dots,\ell_s\},\ell_i)\}.
        \end{aligned}
    \end{equation*}

    If $i>1$, then by induction, we have that $\Psi_r(n,F,\{\ell_i,\dots,\ell_s\},\ell_i)=O(n^{s-i})$ and we are done.
    If $i = 1$ and $s=3$, the result holds by Lemma~\ref{lem: l1,l2,l3}.
    If $i=1$ and $s \ge 4$, by Lemma \ref{lem: l1,l2,l3}, we have that
    \begin{equation}\label{eq: l1,l2,l3}
        \Phi_{\ell_{4}}(n,\{\ell_{1}, \ell_{2},\ell_{3}\}) \leq \binom{n}{2} = O(n^2).
    \end{equation}
    And by Lemma \ref{lem: reduction lemma}, we have
    \begin{equation*}
        \begin{aligned}
            &\Psi_r(n,F,L)\\
            \leq & c(r)^{-1}\max\{\Phi_r(n,L\setminus\{\ell_{4}\}), \Phi_{\ell_{4}}(n,\{\ell_{1}, \ell_{2},\ell_{3}\}) \cdot
            \Psi_r(n,F,\{\ell_{4},\dots,\ell_s\},\ell_{4})\}\\
            \leq & c(r)^{-1}\max\{n^{s-1}, \Phi_{\ell_{4}}(n,\{\ell_{1}, \ell_{2},\ell_{3}\})\cdot n^{s-3}\}\leq O(n^{s-1}),
        \end{aligned}
    \end{equation*} where the last inequality holds by combining (\ref{eq: l1,l2,l3}).
        \end{mycase}
        \hfill$\square$\par


\subsection{Proof of Theorem~\ref{thm: the main theorem 3}}

The proof of Theorem~\ref{thm: the main theorem 3} is almost the same as the proof of Theorem~\ref{thm: the main theorem}.
We only need to extend the following lemmas.

\begin{lemma}\label{lem: L=l1,l2-span}
    Let $r\geq 3$, $0\leq \ell_1<\ell_2$ with $r-\ell_2\neq \ell_2-\ell_1$, and $F$  a graph with $r$ vertices. Then $\Psi_r(n,F,\{\ell_1,\ell_2\})=o(n^2)$. Moreover, if $\ell_2-\ell_1\nmid r-\ell_1$, $\Psi_r(n,span(F),\{\ell_1,\ell_2\})=O(n)$.
\end{lemma}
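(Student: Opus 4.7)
The plan is to follow the proof of Lemma~\ref{lem: L=l1,l2} essentially verbatim, with $\mathscr{H}^r_{G,span(F)}$ in place of $\mathscr{H}^r_{G,F}$. The first observation is that each $r$-subset of $V(G)$ hosts at most $r!$ copies of $F$, so $N(G,F)\le r!\cdot |\mathscr{H}^r_{G,span(F)}|$, and it therefore suffices to bound $|\mathscr{H}^r_{G,span(F)}|$. I would then apply Theorem~\ref{thm: deza-E-F} to extract a common core $W\subseteq\bigcap_{A\in\mathscr{H}^r_{G,span(F)}}A$ of size $\ell_1$, iteratively delete low-degree vertices from the reduced $(r-\ell_1)$-graph to produce $G_k$, and invoke Claim~\ref{claim: sunflower} applied to the $\{0,\ell_2-\ell_1\}$-intersecting hypergraph $\mathscr{H}^{r-\ell_1}_{G_k,span(F),W}$ to obtain the disjoint family $\mathcal{C}(G_k)$ of sunflower cores of size $\ell_2-\ell_1$. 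The divisibility obstruction $(\ell_2-\ell_1)\mid(r-\ell_1)$ still forces $\mathcal{C}(G_k)=\emptyset$ when it fails, yielding the $O(n)$ bound immediately.

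For the $o(n^2)$ bound, the only adjustment to the probabilistic ``good/great'' setup is that for each hyperedge $A\in\mathscr{H}^r_{G_k,span(F)}$ I would pick an arbitrary embedding $\phi_A\colon V(F)\to A$ of $F$ into $G_k[A]$; such a $\phi_A$ is only required to preserve edges, not to be an isomorphism. Under this convention the random permutation $\sigma$ and random partitions produce the same lower bounds on $|\mathcal{F}_{good}|$ and $|\mathcal{F}_{great}|$ as in the induced case, and the auxiliary structures $G^\star$ and $\mathscr{H}^\star$ are defined identically. To sidestep the fact that the chosen $\phi_A$ may not be the one witnessing greatness later, I would relax the definition of ``great'' to mean that \emph{some} embedding of $F$ into $G_k[A]$ is good and sends each $F_i$ into $\mathcal{C}(G_k)$, which only enlarges $\mathcal{F}_{great}$ and therefore preserves all probabilistic lower bounds.

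The main subtlety lies in the necessity direction of Claim~\ref{claim: K_s respond to hyperedge}: from a $K_p$-clique $\{C_1,\dots,C_p\}$ in $G^\star$, with $p=(r-\ell_1)/(\ell_2-\ell_1)\ge 3$ (the case $p=2$ is exactly the excluded case $r-\ell_2=\ell_2-\ell_1$), I would define the bijection $\phi'\colon V(F)\to B\cup W$, where $B=C_1\cup\dots\cup C_p$, exactly as in the induced proof. For each edge $v_{\sigma(i)}v_{\sigma(j)}\in E(F)$, I would pick cores $C_a,C_b$ of the clique containing $\phi'(v_{\sigma(i)})$ and $\phi'(v_{\sigma(j)})$ respectively (choosing any $b\neq a$ when both endpoints lie in $C_a\cup W$); the good embedding $\phi_{A_{C_a,C_b}}$ of the great hyperedge $A_{C_a,C_b}$ must then agree with $\phi'$ on $v_{\sigma(i)}$ and $v_{\sigma(j)}$ by the goodness condition, so the edge of $F$ transfers to an edge of $G_k$ between $\phi'(v_{\sigma(i)})$ and $\phi'(v_{\sigma(j)})$. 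Since $\phi'$ only needs to preserve edges, no verification of non-edges is required---this is the key simplification over the induced setting, where a full isomorphism had to be checked.

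The remainder of the argument---Claim~\ref{claim: 0,1 intersecting} combined with~(\ref{eq: general of (6,3)}) to force $|\mathscr{H}^\star|=o(n^2)$, contradicting the assumed lower bound on $|\mathcal{F}_{great}|$---carries over verbatim. I expect the only genuine obstacle to be formulating ``good/great'' robustly enough to tolerate the multiplicity of embeddings per hyperedge, so that the clique-to-hyperedge reconstruction in Claim~\ref{claim: K_s respond to hyperedge} does not demand the induced isomorphism $G_k[B\cup W]\cong F$ that the clique case enjoyed but that fails in general for $span(F)$.
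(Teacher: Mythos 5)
Your proposal is correct and follows essentially the same route as the paper, whose own proof of this lemma is exactly the observation you make: replace the isomorphism $\phi_A$ by an edge-preserving bijection $\phi'_A$ witnessing a (not necessarily induced) copy of $F$ and repeat the argument of Lemma~\ref{lem: L=l1,l2} verbatim, the necessity direction of Claim~\ref{claim: K_s respond to hyperedge} becoming easier since only edges, not non-edges, must be transferred. Your extra bookkeeping (the factor $r!$ between $N(G,F)$ and $|\mathscr{H}^r_{G,span(F)}|$, and the relaxed notion of ``great'') is a harmless elaboration of what the paper leaves implicit.
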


\begin{proof}
    Lemma~\ref{lem: L=l1,l2-span} is the extension of Lemma~\ref{lem: L=l1,l2}. In the proof of Lemma~\ref{lem: L=l1,l2}, we characterize the vertex set $A \subseteq V(G)$ which forms an induced copy of $F$ by a bijection $\phi_A: V(F) \rightarrow A$ such that $uv\in E(F)$ if and only if $\phi_A(u)\phi_A(v) \in E(G[A])$. Now we consider the copies of $F$~(not necessarily induced). If $G[A]$ has a copy of $F$, then we can characterize it also by a bijection $\phi_A': V(F) \rightarrow A$ such that if $uv \in E(F)$ then $\phi'_A(u)\phi'_A(v) \in E(G[A])$. In the rest parts, we only need to repeat the proof. The details are omitted here.
\end{proof}


\begin{lemma}\label{lem: reduction lemma-span}
For $L=\{\ell_1,\dots,\ell_s\}\subseteq[0,r-1]$ with $0\leq \ell_1<\dots<\ell_s<r$, if $\Psi_r(n,span(F),L)\geq C(r,L)n^{s-1}$ where $C(r,L)$ is the constant in Theorem \ref{thm: deza-E-F}, we have
\begin{equation*}
    \begin{aligned}
       &\Psi_r(n,span(F),L,\ell_1) = \Psi_r(n,span(F),L)\\
       \leq & \frac{N(K_r,F)}{c(r)}\min_{1\le i\le s}\max\{\Phi_r(n,L\setminus\{\ell_i\}),~\Phi_{\ell_i}(n,\{\ell_1,\dots,\ell_i\}) \cdot \Psi_r(n,span(F),\{\ell_i,\dots,\ell_s\},\ell_i)\},
    \end{aligned}
\end{equation*}
where $c(r)$ is the constant stated in Theorem \ref{thm: furedi} and $N(K_r,F)$ is the number of the copies of $F$ in $K_r$.
\end{lemma}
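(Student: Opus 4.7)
The plan is to mimic the proof of Lemma~\ref{lem: reduction lemma} with $\mathscr{H}^r_{G,F}$ systematically replaced by the $r$-graph $\mathscr{H}^r_{G,span(F)}$ of all $r$-subsets of $V(G)$ that induce a copy of $F$. The only genuinely new ingredient is the many-to-one passage from $r$-subsets to copies of $F$: every copy of $F$ in $G$ sits inside a unique $r$-subset $S$ with $G[S]\supseteq F$, and each such $S$ carries at most $N(K_r,F)$ copies of $F$, so
\[
N(G,F)\leq N(K_r,F)\cdot |\mathscr{H}^r_{G,span(F)}|.
\]
This is exactly where the extra factor $N(K_r,F)$ in the conclusion comes from.

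First, fix $i\in[s]$ and let $G$ be an $n$-vertex $(span(F),L)$-intersecting graph with $N(G,F)=\Psi_r(n,span(F),L)$. Combining the hypothesis $\Psi_r(n,span(F),L)\geq C(r,L)n^{s-1}$ with the displayed inequality shows that $|\mathscr{H}^r_{G,span(F)}|$ is comparably large, which (after absorbing $N(K_r,F)$ into the constant, as in the statement) lets Theorem~\ref{thm: deza-E-F} produce a common core $W\subseteq\bigcap_{A\in\mathscr{H}^r_{G,span(F)}}A$ of size $\ell_1$. This yields the equality $\Psi_r(n,span(F),L)=\Psi_r(n,span(F),L,\ell_1)$ and shows that it suffices to prove the maximum bound for a single fixed $i$.

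Next, apply Theorem~\ref{thm: furedi} to $\mathcal{F}:=\mathscr{H}^r_{G,span(F)}$ to extract a subhypergraph $\mathcal{F}^\star$ of size at least $c(r)|\mathcal{F}|$ enjoying properties (i)--(v). Split into two cases. If no set in $\bigcup_{H\in\mathcal{F}^\star}\mathcal{I}(H)$ has cardinality $\ell_i$, then by property (i), $\mathcal{F}^\star$ is $(L\setminus\{\ell_i\})$-intersecting, so $|\mathcal{F}^\star|\leq \Phi_r(n,L\setminus\{\ell_i\})$ and we land in the first branch of the maximum after multiplying by $N(K_r,F)/c(r)$. Otherwise, for each $H\in\mathcal{F}^\star$ the collection $\mathcal{A}^i_H=\{A\in\mathcal{I}(H):|A|=\ell_i\}$ is nonempty, and property (v) makes $\bigcup_H \mathcal{A}^i_H$ an $\ell_i$-graph whose pairwise intersections lie in $L\cap[0,\ell_i]\subseteq\{\ell_1,\dots,\ell_i\}$, so its size is at most $\Phi_{\ell_i}(n,\{\ell_1,\dots,\ell_i\})$. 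For a fixed $A$ of size $\ell_i$ in this union, the subgraph $G\bigl[\bigcup_{H\in\mathcal{F}^\star(A)}V(H)\bigr]$ is $(span(F),\{\ell_i,\dots,\ell_s\})$-intersecting with all copies of $F$ passing through $A$, so the number of copies of $F$ supported there is at most $\Psi_r(n,span(F),\{\ell_i,\dots,\ell_s\},\ell_i)$; dividing by $N(K_r,F)$ bounds $|\mathcal{F}^\star(A)|$. Multiplying the $\Phi$-bound by the $\Psi$-bound, absorbing the $c(r)^{-1}$, and finally multiplying by $N(K_r,F)$ to pass from $|\mathcal{F}|$ back to $N(G,F)$ gives the second branch. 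Taking the minimum over $i$ completes the proof.

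The main obstacle I anticipate is purely bookkeeping: tracking where the multiplicative $N(K_r,F)$ appears (it enters exactly twice, once on the left when passing from $|\mathcal{F}|$ to $N(G,F)$ and once on the right when passing from $\Psi_r(n,span(F),\{\ell_i,\dots,\ell_s\},\ell_i)$ back to a bound on $|\mathcal{F}^\star(A)|$, which combine into a single factor), and verifying that the looser $\{\ell_1,\dots,\ell_i\}$-intersecting bound on $\bigcup_H\mathcal{A}^i_H$ stated in the lemma is indeed the correct one to use in the span setting. Apart from this accounting, every structural step---the Furedi decomposition, the dichotomy on whether $\ell_i$ appears as an intersection size, and the reduction to $\Psi_r(n,span(F),\{\ell_i,\dots,\ell_s\},\ell_i)$---is verbatim the argument already carried out for Lemma~\ref{lem: reduction lemma}.
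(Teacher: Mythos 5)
Your architecture is exactly the one the paper intends (it omits this proof, describing it as the reduction of Lemma~\ref{lem: reduction lemma} applied to $\mathscr{H}^r_{G,span(F)}$), and the structural steps --- the F\"uredi decomposition, the dichotomy on whether $\ell_i$ occurs as an intersection size, and the reduction to $\Psi_r(n,span(F),\{\ell_i,\dots,\ell_s\},\ell_i)$ --- are the right ones. The step that does not work as written is your claim that one may bound $|\mathcal{F}^\star(A)|$ by $\Psi_r(n,span(F),\{\ell_i,\dots,\ell_s\},\ell_i)/N(K_r,F)$ (``dividing by $N(K_r,F)$''). That inequality would require every hyperedge of $\mathcal{F}^\star(A)$ to host at least $N(K_r,F)$ distinct copies of $F$, which is false in general: a hyperedge of $\mathscr{H}^r_{G,span(F)}$ only guarantees one copy. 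Your own accounting is also internally inconsistent: a factor $N(K_r,F)$ introduced when passing from $|\mathcal{F}|$ to $N(G,F)$ together with a division by $N(K_r,F)$ on the other side would cancel rather than ``combine into a single factor'', and would yield (through the unjustified step) a bound in the second branch that is stronger than the lemma. The repair is immediate and is what the intended proof does: distinct hyperedges of $\mathcal{F}^\star(A)$ are distinct $r$-sets, each containing at least one copy of $F$, so $|\mathcal{F}^\star(A)|\le N\bigl(G\bigl[\bigcup_{S\in\mathcal{F}^\star(A)}S\bigr],F\bigr)\le \Psi_r(n,span(F),\{\ell_i,\dots,\ell_s\},\ell_i)$ with no division at all; the single factor $N(K_r,F)$ in the lemma enters only once, via $N(G,F)\le N(K_r,F)\,|\mathscr{H}^r_{G,span(F)}|$ at the outset.

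Two smaller points. The application of Theorem~\ref{thm: deza-E-F} needs $|\mathscr{H}^r_{G,span(F)}|\ge C(r,L)n^{s-1}$, whereas the hypothesis on $N(G,F)$ only yields $|\mathscr{H}^r_{G,span(F)}|\ge C(r,L)n^{s-1}/N(K_r,F)$; your remark about absorbing $N(K_r,F)$ into the constant is the right fix (and harmless in the application to Theorem~\ref{thm: the main theorem 3}, where one may assume $\Psi_r(n,span(F),L)\ge N(K_r,F)C(r,L)n^{s-1}$ or else conclude $O(n^{s-1})$ directly), but it should be stated explicitly since the lemma quotes the same constant $C(r,L)$. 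Finally, your treatment of the $\Phi_{\ell_i}(n,\{\ell_1,\dots,\ell_i\})$ term is fine: distinct $\ell_i$-sets cannot intersect in $\ell_i$ vertices, so this quantity coincides with the $\Phi_{\ell_i}(n,\{\ell_1,\dots,\ell_{i-1}\})$ bound used in the induced-copy version.
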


Lemma~\ref{lem: reduction lemma-span} is the extension of Lemma~\ref{lem: reduction lemma}.
With  Lemmas \ref{lem: L=l1,l2-span} and \ref{lem: reduction lemma-span}, the proof of Theorem \ref{thm: the main theorem 3} is similar to Theorem \ref{thm: the main theorem}. We omit the details here.


\section{Preliminary}\label{sec: preliminary}

\subsection{ The entropy method}

We first introduce basic properties of the entropy method.
For a complete discussion and proofs for properties of the entropy method, readers may refer to Section 14.6 in~\cite{alon2016probabilistic}.

\begin{definition}\label{def: entropy}
	For any discrete random variable $X$ with finite support $S$, the \textbf{entropy} of $X$ is defined as
	\begin{equation*}
			H(X) = -\sum_{x \in S} p_X(x) \log p_X(x),
	\end{equation*}
	where $p_X(x)$ denotes  $\mathbb{P}(X=x)$.
\end{definition}

The entropy $H(X_1, \ldots, X_n)$ of several discrete random variables $X_1, \ldots, X_n$ is defined as the entropy of the random variable $X=(X_1, \ldots, X_n)$.

\begin{definition}[Conditional Entropy]\label{def: conditional entropy}
	Let $X,Y$ be two random variables with finite supports $S,T$, respectively.
	Denote by $p_{X,Y}(x,y)$ the probability $\mathbb{P}(X=x,Y=y)$.
	For each $y \in T$, let $X|Y=y$ be the random variable $X$ conditioned on $Y=y$, and $H(X \mid Y=y)$ be its entropy.
	The conditional entropy of $X$ given $Y$ is defined as
	\begin{equation*}
		H(X \mid Y) = \sum_{y \in T} p_Y(y) H(X\mid Y=y) = -\sum_{x \in S, y \in T} p_{X,Y}(x,y) \log \left(\frac{p_{X,Y}(x,y)}{p_Y(y)}\right).
	\end{equation*}
\end{definition}

\begin{proposition}\label{prop: entropy support bound}
	For any random variable $X$ with finite support $S$, we have $H(X) \le \log |S|$.
	The equality holds when $X$ is uniformly distributed over $S$.
\end{proposition}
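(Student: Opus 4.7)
The plan is to prove this classical bound via Jensen's inequality applied to the concave function $\log$. Adopting the standard convention $0 \log 0 = 0$, I would first restrict attention to the effective support $S' := \{x \in S : p_X(x) > 0\}$, so that every logarithm below is well-defined, and rewrite
\[
H(X) = \sum_{x \in S'} p_X(x) \log \frac{1}{p_X(x)} = \mathbb{E}\!\left[\log \frac{1}{p_X(X)}\right].
\]

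Next I would invoke Jensen's inequality for the concave function $\log$ to move the expectation inside:
\[
\mathbb{E}\!\left[\log \frac{1}{p_X(X)}\right] \le \log \mathbb{E}\!\left[\frac{1}{p_X(X)}\right] = \log \sum_{x \in S'} p_X(x) \cdot \frac{1}{p_X(x)} = \log |S'| \le \log |S|,
\]
which yields the desired bound $H(X) \le \log|S|$.

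For the equality case, I would use the fact that $\log$ is strictly concave, so Jensen's inequality becomes an equality precisely when $1/p_X(X)$ is almost-surely constant; this forces $p_X$ to be uniform on $S'$. Combined with the final inequality $|S'| \le |S|$, which is an equality iff $S' = S$, this shows that $H(X) = \log |S|$ exactly when $X$ is uniform on $S$. Conversely, if $X$ is uniform on $S$, then a direct substitution into the definition of entropy immediately gives $H(X) = \log |S|$, confirming the characterization claimed in the statement.

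There is no real obstacle here: the result is a textbook application of Jensen's inequality. The only place requiring care is the handling of atoms with $p_X(x) = 0$, and this is completely routine once the convention $0 \log 0 = 0$ (equivalently, the restriction to $S'$) is made explicit. An equivalent route, which I would mention as an alternative if concision is desired, is to observe that $\log|S| - H(X) = \sum_{x \in S} p_X(x) \log (|S| \, p_X(x))$ is the Kullback--Leibler divergence of $p_X$ from the uniform distribution on $S$, and then invoke the nonnegativity of KL divergence (Gibbs' inequality) with its standard equality condition.
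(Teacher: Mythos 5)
Your proof is correct: the paper itself does not prove this proposition but merely cites Section 14.6 of Alon--Spencer, and your Jensen's-inequality argument (with the convention $0\log 0=0$ and restriction to the effective support) is exactly the standard proof given in such references. Note also that you establish the stronger ``if and only if'' characterization of equality, whereas the statement only asserts that uniformity on $S$ suffices, so nothing is missing.
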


\begin{proposition}[Chain Rule]\label{prop: chain rule}
	For any two random variables $X$ and $Y$ with finite supports, we have
	\begin{equation*}
		H(X,Y) = H(X) + H(Y\mid X) = H(Y) + H(X\mid Y).
	\end{equation*}
	More generally, for any $n$ random variables $X_1, \ldots, X_n$ with finite supports, we have
	\begin{equation*}
		H(X_1, \ldots, X_n) = H(X_1) + H(X_2\mid X_1) + \cdots + H(X_{n} \mid X_{1}, \ldots, X_{n-1}).
	\end{equation*}
\end{proposition}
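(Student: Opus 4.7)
The plan is to establish the two-variable identity directly from the definitions and then extend to $n$ variables by induction. I expect no substantial obstacle; the statement is a standard piece of information theory, and the proof is essentially bookkeeping with the definitions.

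For the two-variable case, start from Definition \ref{def: entropy} applied to the pair $(X,Y)$:
\[
H(X,Y) = -\sum_{x \in S,\, y \in T} p_{X,Y}(x,y) \log p_{X,Y}(x,y).
\]
Factor $p_{X,Y}(x,y) = p_X(x) \cdot (p_{X,Y}(x,y)/p_X(x))$ on the support where $p_X(x) > 0$ (terms with $p_X(x) = 0$ contribute nothing and can be discarded), and split the logarithm additively. The first resulting sum is
\[
-\sum_{x,y} p_{X,Y}(x,y) \log p_X(x) = -\sum_x p_X(x) \log p_X(x) = H(X),
\]
after marginalizing over $y$. The second is, by Definition \ref{def: conditional entropy},
\[
-\sum_{x,y} p_{X,Y}(x,y) \log \bigl(p_{X,Y}(x,y)/p_X(x)\bigr) = H(Y \mid X).
\]
Adding these gives $H(X,Y) = H(X) + H(Y \mid X)$. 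The symmetric identity $H(X,Y) = H(Y) + H(X \mid Y)$ follows by exchanging the roles of $X$ and $Y$.

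For the general chain rule, induct on $n$. The base case $n=2$ is what was just proved. For the inductive step, set $Z = (X_1, \ldots, X_{n-1})$ and view $(X_1, \ldots, X_n) = (Z, X_n)$ as a pair of random variables with finite support. The two-variable identity yields
\[
H(X_1, \ldots, X_n) = H(Z, X_n) = H(Z) + H(X_n \mid Z) = H(X_1, \ldots, X_{n-1}) + H(X_n \mid X_1, \ldots, X_{n-1}).
\]
Applying the induction hypothesis to $H(X_1, \ldots, X_{n-1})$ expands it as $H(X_1) + H(X_2 \mid X_1) + \cdots + H(X_{n-1} \mid X_1, \ldots, X_{n-2})$, and combining with the last term above completes the proof.

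The only minor care needed is to treat zero-probability atoms correctly, using the standard convention $0 \log 0 = 0$ and restricting sums to the supports where the relevant marginal or conditional probabilities are positive; this is already implicit in Definitions \ref{def: entropy} and \ref{def: conditional entropy}, so no further work is required.
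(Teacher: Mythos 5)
Your proof is correct: the two-variable identity follows exactly as you write from Definitions \ref{def: entropy} and \ref{def: conditional entropy}, and the induction with $Z=(X_1,\ldots,X_{n-1})$ cleanly gives the general case. The paper itself offers no proof of Proposition \ref{prop: chain rule} — it simply cites Section 14.6 of \cite{alon2016probabilistic} for these standard entropy facts — and your argument is precisely the standard derivation found there, so there is nothing to reconcile.
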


\begin{proposition}[Drop Condition]\label{prop: drop condition}
	For any two random variables $X$ and $Y$ with finite supports, we have
	\[H(X \mid Y) \le H(X).\]
\end{proposition}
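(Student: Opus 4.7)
The plan is to prove the inequality by showing the non-negativity of the mutual information $I(X;Y) := H(X) - H(X\mid Y)$. The first step is to derive, from Definition~\ref{def: entropy} and Definition~\ref{def: conditional entropy} together with the marginalization identity $p_X(x) = \sum_y p_{X,Y}(x,y)$, the standard rewriting
\[
H(X) - H(X\mid Y) \;=\; \sum_{x,y} p_{X,Y}(x,y)\,\log\frac{p_{X,Y}(x,y)}{p_X(x)\,p_Y(y)},
\]
where the sum ranges over pairs $(x,y)$ with $p_{X,Y}(x,y) > 0$ and we adopt the convention $0\log 0 = 0$. This quantity is precisely the relative entropy between the joint distribution of $(X,Y)$ and the product of its marginals, so the task reduces to verifying Gibbs' inequality in this special case.

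Next, I would establish non-negativity of the right-hand side via the elementary bound $\ln t \ge 1 - 1/t$, valid for all $t > 0$ with equality iff $t=1$. Applying it to $t = p_{X,Y}(x,y)/(p_X(x) p_Y(y))$ and weighting by $p_{X,Y}(x,y)$, the sum telescopes as
\[
\sum_{x,y} p_{X,Y}(x,y) \ln\frac{p_{X,Y}(x,y)}{p_X(x)\,p_Y(y)} \;\ge\; \sum_{x,y}\bigl(p_{X,Y}(x,y) - p_X(x)\,p_Y(y)\bigr) \;\ge\; 0,
\]
the final inequality using $\sum_{x,y} p_X(x)p_Y(y) = 1$ over the full product support while the sum on the left is restricted to the (possibly smaller) joint support. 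Converting from natural logarithm to the log-base implicit in Definition~\ref{def: entropy} only scales both sides by a positive constant, so the inequality is preserved. An equivalent route would be Jensen's inequality applied to the concave function $\log$ and the random variable $p_X(X)p_Y(Y)/p_{X,Y}(X,Y)$ under the joint law.

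There is no genuine obstacle here; this is the classical proof of Gibbs' inequality. The only items requiring care are bookkeeping around the convention $0\log 0 = 0$ and pairs $(x,y)$ for which $p_X(x)p_Y(y) > 0$ but $p_{X,Y}(x,y) = 0$, which contribute nothing to the left-hand sum and are automatically absorbed by the right-hand bound. Since the statement is entirely standard, I would expect the authors to omit a proof, perhaps citing Proposition~\ref{prop: chain rule} together with the non-negativity of relative entropy, or simply referring the reader to Section~14.6 of \cite{alon2016probabilistic}.
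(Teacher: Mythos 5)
Your proof is correct: the rewriting $H(X)-H(X\mid Y)=\sum_{x,y}p_{X,Y}(x,y)\log\frac{p_{X,Y}(x,y)}{p_X(x)p_Y(y)}$ and the bound $\ln t\ge 1-1/t$ give exactly the standard Gibbs'-inequality argument, and your handling of the support and the $0\log 0$ convention is fine. The paper itself supplies no proof of this proposition, deferring (as you anticipated) to Section~14.6 of~\cite{alon2016probabilistic}, so your write-up simply fills in the standard argument the authors chose to omit.
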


We will use the entropy method to count the induced cycles in some graph $G$.
The method is essentially a re-translated version of the counting method in~\cite{norin2019bound} used to count induced cycles.

\subsection{Atom}\label{sec: atom}
Here we will introduce some basic definitions and lemmas that will help us deal with the situation when $L$ and $r$ form an arithmetic progression and $F=C_r$ in the following proof of Theorem~\ref{thm: the main theorem 2}.

Now we assume $n$ is sufficiently large.
Assume $L = \{\ell_1,\ell_2,\ldots,\ell_s\}$ and $\ell_1,\ell_2,\ldots,\ell_s,r$ form an arithmetic progression with common difference $d$, that is, $\ell_{i+1}-\ell_i = d$ for all $i=1,2,\ldots,s$~(write $\ell_{s+1}=r$). Then $r=\ell_1+sd$.
We will first prove that $\Psi_r(n,C_r,L)\geq (1+o(1))\left(\frac{n-\ell_1}{r-\ell_1}\right)^s$ in Sections~\ref{sec: proof AP 1} and \ref{sec: proof AP 2-4}.
Let $G$ be the extremal $(C_r, L)$-intersecting graph with $n$ vertices maximizing $N_{ind}(G,C_r)$.
That is, $\Psi_r(n,C_r,L) = N_{ind}(G,C_r)$.
By the lower bound and Theorem~\ref{thm: deza-E-F}, there exists $W \subseteq \bigcap_{A \in \mathscr{H}_{G,C_r}^{r}}A$ such that $|W| = \ell_1$. 

Here, we define \textbf{atom} as in \cite{zhao2025counting}.
We say that a subset $S\subseteq V(G)\setminus W$ with $|S|\geq d$ is an \textbf{atom} if $S$ is inclusion maximal with the property that either $S\subseteq A$ or $S\cap A=\emptyset$ for any $A\in \mathscr{H}^{r-\ell_1}_{G,C_r,W}$.
Then the atoms are pairwise disjoint. Let $\mathcal{S}\subseteq \binom{V(G)}{d}$ be the set consisting of all atoms of size $d$ and let $X_1:=\bigcup_{S\in \mathcal{S}}S\subseteq V(G)$.
Since atoms are disjoint sets, we have that
\begin{equation}\label{eq: ub S}
    |\mathcal{S}| \le (|V(G)|-|W|)/d=(n-\ell_1)/d,
\end{equation}
which is important in our following proof.

Recalling that $X_1 = \bigcup_{S \in\mathcal{S}}S$, we can also view $\mathcal{S}$ as a partition of $X_1$ into sets of size $d$.

Let $H$ be a graph with vertex set $X \cup Y$ satisfying that $d \mid |X|$ and $|Y|=\ell_1$.
Let $\mathcal{S}'$ be a partition of $X$ such that each set has size $d$.
For an induced copy of $C_r$ in $H$, denoted by $C$, we say  $C$ is \textbf{standard} with $(X, Y, \mathcal{S}')$ if there are $S_1,\ldots,S_s\in \mathcal{S}'$ such that $V(C) = \left( \bigcup_{i=1}^{s}S_i \right) \cup Y$.
By the definition of atoms, all induced copies of $C_r$ in $G[X_1\cup W]$ are standard with $(X_1, W,\mathcal{S})$.
Let $\Upsilon(H, X, Y, \mathcal{S}', C_r)$ be the collection of induced copies of $C_r$ standard with $(X, Y, \mathcal{S}')$ in $H$.
Then we have
\begin{equation}\label{eq: Upsilon with scr H}
 N_{ind}(G[X_1\cup W], C_r)  = |\Upsilon(G[X_1\cup W], X_1, W, \mathcal{S}, C_r)|.
\end{equation}
The purpose is to relax the condition that all induced copies of $C_r$ must be $L$-intersecting.
Note that if all induced copies of $C_r$ in $H[X\cup Y]$ are standard with $(X, Y,\mathcal{S}')$, then $H$ is $(C_r, L)$-intersecting.
In other words, we only count those standard induced copies of $C_r$. We permit non-standard cycles to arise but do not count them.

\begin{lemma}\label{lem: e HW has edge, Upsilon}
    When $s\geq 2$ and $E(H[Y]) \neq \emptyset$, we have $|\Upsilon(H,X,Y,\mathcal{S}',C_r)|\leq (1+o(1))\left( \frac{|\mathcal{S'}|}{s} \right)^s$.
\end{lemma}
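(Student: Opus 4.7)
The plan is to apply the entropy method, using the hypothesized edge $y_1y_2 \in E(H[Y])$ as an anchor. For every $C \in \Upsilon(H,X,Y,\mathcal{S}',C_r)$, since $C$ is an induced cycle containing the edge $y_1y_2$, the vertices $y_1,y_2$ must be consecutive on $C$ (otherwise $y_1y_2$ would be a chord of the induced cycle). Fixing a traversal direction, each $C$ inherits a canonical vertex sequence $(v_1=y_1,v_2=y_2,v_3,\ldots,v_r)$, and hence a canonical ordered $s$-tuple $(S_1,\ldots,S_s)$ of atoms listed by first appearance along this walk. Since $V(C)=Y\cup S_1\cup\cdots\cup S_s$ together with $H[V(C)]\cong C_r$ determines $C$, the map $C\mapsto (S_1,\ldots,S_s)$ is injective, so drawing $C$ uniformly from $\Upsilon$ gives $\log|\Upsilon|=H(S_1,\ldots,S_s)$.

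The main estimate is to show $H(S_1,\ldots,S_s)\leq s\log(|\mathcal{S}'|/s)+o(1)$. I would introduce, for each atom $S\in\mathcal{S}'$, the random variable $J_S\in[s]\cup\{\bot\}$ recording the slot of $S$ in the canonical order of $C$ (with $J_S=\bot$ when $S\notin C$). Subadditivity of entropy then gives $H(C)\leq\sum_{S\in\mathcal{S}'}H(J_S)$. Writing $r_S=\mathbb{P}(S\in C)$, the natural decomposition splits $H(J_S)$ into a ``presence'' part and a conditional slot-choice part $r_S\cdot H(J_S\mid S\in C)$. Using $\sum_S r_S=s$ together with Jensen's inequality applied to the concave function $x\mapsto x\log(1/x)$, the presence part contributes at most $s\log(|\mathcal{S}'|/s)+o(1)$, so the task reduces to bounding $\sum_S r_S\cdot H(J_S\mid S\in C)$ by $o(1)$.

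The main obstacle is precisely this last bound. A naive estimate $H(J_S\mid S\in C)\leq\log s$ yields only $\sum_S r_S\log s=s\log s=O(1)$, giving $|\Upsilon|\leq O_s(1)\cdot(|\mathcal{S}'|/s)^s$ and missing the required multiplicative constant $1+o(1)$ by a factor of $s^s$. To save this, one must show that for essentially every atom $S$ participating in a random cycle, the slot $J_S$ is almost deterministic conditional on $S\in C$. I anticipate a structural argument: the slot of $S$ in any canonical order is controlled by which vertex of $S$ lies at the first-appearance position, and this vertex is sharply constrained by the induced-cycle edge condition at the interface between $Y$ (or previously-placed atoms) and $S$. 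Making this rigorous --- most likely by partitioning $\mathcal{S}'$ into ``slot-types'' based on the adjacency profile of each atom with $Y$, showing that atoms of a given slot-type can occupy essentially only one slot in any cycle they belong to, and controlling the negligible population of ``boundary'' atoms that do not fit cleanly --- will constitute the technical heart of the proof.
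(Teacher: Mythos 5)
There is a genuine gap, and in fact two distinct problems. First, as you acknowledge, the step on which everything hinges --- that conditional on $S\in C$ the slot $J_S$ is almost deterministic --- is not proved; it is exactly the structural content of the lemma, so the proposal defers the technical heart rather than resolving it. Second, and more seriously, even if slot-determinism were granted, the accounting in your ``presence part'' is incorrect, so the method as set up cannot reach the constant $1+o(1)$. Writing $N=|\mathcal{S}'|$ and $r_S=\mathbb{P}(S\in C)$, subadditivity gives $H(C)\le\sum_S H(J_S)=\sum_S\bigl(h(r_S)+r_S\,H(J_S\mid S\in C)\bigr)$ with $h$ the binary entropy, and $\sum_S h(r_S)$ contains not only $\sum_S r_S\log(1/r_S)\le s\log(N/s)$ but also $\sum_S(1-r_S)\log\frac{1}{1-r_S}\approx s\log e$, an additive constant, not $o(1)$. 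So even with $H(J_S\mid S\in C)=0$ you only get $|\Upsilon|\lesssim e^{s}(N/s)^s$. This loss is not an artifact of sloppy estimation: in the extremal configuration where $\mathcal{S}'$ splits into $s$ classes of size $N/s$ and every transversal gives a cycle, one has $H(C)=s\log(N/s)$ while $\sum_S h(r_S)\approx s\log(N/s)+s\log e$, because subadditivity over per-atom indicators ignores the strong negative correlation that exactly one atom per class appears. To recover the right constant you must first group atoms into their roles and bound the choice in each role by the size of that class, then apply AM-GM to the class sizes --- which is essentially the paper's argument, done without entropy: using a degree-one vertex $y_1$ of $H[Y]$, every standard cycle contains exactly one atom from the set $\mathcal{S}''$ of atoms meeting $N_H(y_1)$, so one can induct on $s$ (the $s=2$ base case is a triangle-freeness plus Tur\'an argument on an auxiliary graph on $\mathcal{S}'$) and finish with $|\mathcal{S}''|\bigl(\frac{|\mathcal{S}'|-|\mathcal{S}''|}{s-1}\bigr)^{s-1}\le\bigl(\frac{|\mathcal{S}'|}{s}\bigr)^{s}$. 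As written, your outline neither supplies the role-decomposition nor can its entropy bookkeeping deliver the stated bound.
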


\begin{proof}
   Denote $Y=\{y_1,\dots,y_{\ell_1}\}$. Since $Y$ is contained in all standard induced copies of $C_r$ and $E(H[Y]) \neq \emptyset$, there exists a $y_i\in Y$, say $y_1$, such that  $d_{G[Y]}(y_1)=1$. Let $$\mathcal{S}'':=\{S\in \mathcal{S}':~\text{there exist $u\in S$ such that $uy_1\in E(H)$ }\}.$$
    We first define a new auxiliary graph $G_\Upsilon$ with  vertex set $\mathcal{S'}$ and  edge set
    $$E(G_\Upsilon)=\bigg\{\{S_1,S_2\}\in \binom{\mathcal{S'}}{2}:~ S_1\cup S_2\subseteq V(C_r) \text{~for some $C_r\in \Upsilon(H,X,Y,\mathcal{S}',C_r)$}\bigg\}.$$
    We will prove the lemma by induction on $s$.
    When $s=2$, $|E(G_\Upsilon)| = |\Upsilon(H,X,Y,\mathcal{S}',C_r)|$.
    We have $E(G_\Upsilon[\mathcal{S}''])=\emptyset$ since the degree of $y_1$ is $2$ in every induced $C_r$ and $d_{G[Y]}(y_1)=1$. Similarly, we have $E(G_\Upsilon[\mathcal{S}'\setminus\mathcal{S}''])=\emptyset$. Thus $G_\Upsilon$ is $K_3$-free and we are done by Tur\'an theorem.

    Let $s\geq 3$. Since $d_{G[Y]}(y_1)=1$, every $C_r\in \Upsilon(H,X,Y,\mathcal{S}',C_r)$ contains exactly one element in $\mathcal{S}''$. Thus we have
    $$|\Upsilon(H,X,Y,\mathcal{S}',C_r)|=\sum_{S\in \mathcal{S}''}|\Upsilon(H[X\setminus(\cup_{U\in \mathcal{S}''}U)\cup Y\cup S],X\setminus(\cup_{U\in \mathcal{S}''}U),Y\cup S,\mathcal{S}'\setminus\mathcal{S}'',C_r)|.$$
    Since $E(H[Y\cup S])\neq \emptyset$, by the induction hypothesis, we have
    $$|\Upsilon(H[X\setminus(\cup_{U\in \mathcal{S}''}U)\cup Y\cup S],X\setminus(\cup_{U\in \mathcal{S}''}U),Y\cup S,\mathcal{S}'\setminus\mathcal{S}'',C_r)|\leq (1+o(1))\left(\frac{|\mathcal{S'}|-|\mathcal{S}''|}{s-1} \right)^{s-1}.$$
    Then we have $$|\Upsilon(H,X,Y,\mathcal{S}',C_r)|\leq (1+o(1))|\mathcal{S}''|\left(\frac{|\mathcal{S}'|-|\mathcal{S''}|}{s-1}\right)^{s-1}\leq (1+o(1))\left( \frac{|\mathcal{S'}|}{s} \right)^s.$$
    The equality holds when $|\mathcal{S''}|=\frac{|\mathcal{S}'|}{s}$.
\end{proof}

By Lemma~\ref{lem: e HW has edge, Upsilon}, (\ref{eq: Upsilon with scr H}) and (\ref{eq: ub S}), we have the following corollary.

\begin{cor}\label{cor: eG[W] is independent}
    When $|L|=s\geq 2$ and $E(G[W])\neq\emptyset$, we have $N_{ind}(G[X_1\cup W],C_r)\leq (1+o(1))\left( \frac{|\mathcal{S}|}{s} \right)^s=(1+o(1))\left(\frac{n-\ell_1}{r-\ell_1}\right)^s$.
\end{cor}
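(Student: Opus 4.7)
The plan is to observe that this corollary is essentially an immediate combination of the three facts listed in the cue: equation (\ref{eq: Upsilon with scr H}), Lemma \ref{lem: e HW has edge, Upsilon}, and the bound (\ref{eq: ub S}). There is no new ideas needed; the only thing to check is that the parameters line up correctly, in particular that the arithmetic-progression relation $r = \ell_1 + sd$ converts the bound $|\mathcal{S}|/s$ in the lemma into $(n-\ell_1)/(r-\ell_1)$.

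First I would apply (\ref{eq: Upsilon with scr H}) to rewrite the induced-cycle count as $N_{ind}(G[X_1\cup W], C_r) = |\Upsilon(G[X_1\cup W], X_1, W, \mathcal{S}, C_r)|$. Next, since the hypothesis $E(G[W])\neq\emptyset$ is exactly the hypothesis of Lemma \ref{lem: e HW has edge, Upsilon} applied to the host graph $H = G[X_1\cup W]$ with $X = X_1$, $Y = W$, $\mathcal{S}'=\mathcal{S}$, and $|W|=\ell_1$, the lemma gives
\[
|\Upsilon(G[X_1\cup W], X_1, W, \mathcal{S}, C_r)| \le (1+o(1))\left(\frac{|\mathcal{S}|}{s}\right)^s.
\]

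Finally, I would plug in the upper bound (\ref{eq: ub S}), namely $|\mathcal{S}| \le (n-\ell_1)/d$. Using the arithmetic-progression assumption $r - \ell_1 = sd$, this gives $|\mathcal{S}|/s \le (n-\ell_1)/(sd) = (n-\ell_1)/(r-\ell_1)$, so
\[
N_{ind}(G[X_1\cup W], C_r) \le (1+o(1))\left(\frac{|\mathcal{S}|}{s}\right)^s \le (1+o(1))\left(\frac{n-\ell_1}{r-\ell_1}\right)^s,
\]
which is the claimed inequality.

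There is no real obstacle here; this is a purely bookkeeping step. The only subtle point worth flagging is the identification $r-\ell_1 = sd$, which is where the arithmetic-progression hypothesis enters, and the fact that the hypothesis of the lemma is verified by the assumption $E(G[W])\neq\emptyset$ together with $s\geq 2$.
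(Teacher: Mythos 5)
Your proposal is correct and follows exactly the paper's route: the paper derives this corollary precisely by combining Lemma~\ref{lem: e HW has edge, Upsilon} with the identities (\ref{eq: Upsilon with scr H}) and (\ref{eq: ub S}), using $r-\ell_1=sd$ just as you do. Nothing further is needed.
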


Let $X_0:=V(G)\setminus X_1$. The following lemma shows that for every $x\in X_0$, the number of hyperedges in $\mathscr{H}^{r-\ell_1}_{G, C_r, W}$ contained $x$ is very small, hence, the number of induced copies of $C_r$ contained $W$ and $x$ is very small. The lemma has been proved by Frankl and Tokushige \cite{frankl2016uniform}.
\begin{lemma}[Frankl and Tokushige \cite{frankl2016uniform}]\label{lem: X_0 has small number of Cr}
    For any $x\in X_0$, we have $$|\{A:~x\in A\in \mathscr{H}^{r-\ell_1}_{G,C_r,W}\}|=O(n^{s-2}).$$
\end{lemma}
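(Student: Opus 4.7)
The plan is to control the link hypergraph at $x$ by a two-step use of the Deza--Erd\H{o}s--Frankl theorem: the quantitative half gives a preliminary bound of $O(n^{s-1})$, and the structural half together with a sunflower argument exploits the condition $x \in X_0$ to sharpen it by a factor of $n$.

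First, set up the link. Recall $\mathscr{H} := \mathscr{H}^{r-\ell_1}_{G,C_r,W}$ is $(sd)$-uniform and $L'$-intersecting with $L' = \{0, d, 2d, \ldots, (s-1)d\}$, and we may assume $x \in X_0 \setminus W$ (for $x \in W$ the statement is vacuous). Define the link $\mathscr{H}_x := \{A \setminus \{x\} : x \in A \in \mathscr{H}\}$, which is $(sd-1)$-uniform. Any two edges of $\mathscr{H}_x$ arise from edges of $\mathscr{H}$ already sharing the vertex $x$, so their $\mathscr{H}$-intersections are nonempty and hence at least $d$; removing $x$ shows $\mathscr{H}_x$ is $L''$-intersecting with $L'' = \{d-1, 2d-1, \ldots, (s-1)d-1\}$, of size $|L''| = s-1$.

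Next, apply Theorem~\ref{thm: deza-E-F} to $\mathscr{H}_x$: directly it gives $|\mathscr{H}_x| \leq n^{s-1}$, which is too weak. Suppose for contradiction that $|\mathscr{H}_x| \geq C n^{s-2}$ for the constant $C = C(sd-1, L'')$ from Theorem~\ref{thm: deza-E-F}. The second part of that theorem produces a set $T \subseteq V(G) \setminus (W \cup \{x\})$ with $|T| \geq \min L'' = d-1$ contained in every edge of $\mathscr{H}_x$; equivalently, $S := T \cup \{x\}$ with $|S| \geq d$ lies in every edge of $\mathscr{H}$ through $x$. Choosing $T$ to be the intersection of all edges of $\mathscr{H}_x$ (minimal, so $|S|$ is minimal possible), one aims to show $S$ is an atom, contradicting $x \in X_0$.

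To verify the atom property, extract from $\mathscr{H}$ a sunflower $A_1, \ldots, A_N$ of edges through $x$ whose pairwise intersections equal \emph{exactly} $S$; such a sunflower of unbounded size exists because $|\mathscr{H}_x| \geq Cn^{s-2}$ while the minimality of $T$ prevents the common core from enlarging. For any $A' \in \mathscr{H}$ with $x \notin A'$, write $A' \cap A_i = (A' \cap S) \cup \bigl((A' \cap A_i)\setminus S\bigr)$; since the petals $A_i \setminus S$ are disjoint and $|A'| = sd$, at most $O(1)$ petals contribute nonzero second term, so for most $i$ we have $A' \cap A_i = A' \cap S$, forcing $|A' \cap S| \in L'$. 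Because $x \notin A'$ and $|S| = d$, we get $|A' \cap S| < d$, and the only element of $L'$ below $d$ is $0$. Thus $A' \cap S = \emptyset$, proving $S$ is an atom of size $d$ containing $x$, contradicting $x \in X_0$.

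The main obstacle is calibrating step three: one has to guarantee that the extracted sunflower really has core exactly $S$, and to rule out the degenerate situation in which $x$ sits in an atom of size strictly greater than $d$ (where the contradiction with $X_0$ dissolves). The first point is handled by a careful minimal choice of $T$ combined with the sunflower lemma applied to $\mathscr{H}_x$ after discarding a lower-order number of edges; the second is handled by iterating the same link-and-DEF reduction inside the candidate atom, reducing $s$ by one each time, until the minimal atom around $x$ has size exactly $d$ (the base case $s=1$ being trivial since $\mathscr{H}$ then becomes a matching).
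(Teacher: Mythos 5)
The paper does not actually prove this lemma (it is quoted with a citation to Frankl--Tokushige), so your argument has to stand on its own. Your skeleton is the natural one and is essentially completable: pass to the link at $x$, use the ``moreover'' clause of Theorem~\ref{thm: deza-E-F} to get a set $S\ni x$, $|S|\ge d$, contained in every edge of the star at $x$, and then use a sunflower-type argument to show $S$ is an atom of size $d$, contradicting $x\in X_0$. However, two steps are genuinely gapped as written. First, the extraction of edges $A_1,\dots,A_N$ through $x$ with pairwise intersections \emph{exactly} $S$: the justification ``such a sunflower of unbounded size exists because $|\mathscr{H}_x|\ge Cn^{s-2}$ while the minimality of $T$ prevents the common core from enlarging'' is not a proof, and the claim of unbounded size is in fact false in general --- the star can split into boundedly many ``blocks'', each of size $\Theta(n^{s-2})$, with intersections of size $\ge 2d$ inside a block and exactly $S$ only across blocks, so only a bounded number of pairwise-exactly-$S$ edges exist. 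Fortunately you only need $N\ge sd+1$, and this does follow, but it requires an argument you omitted: in the trace family $\{A\setminus S:\ x\in A\in\mathscr{H}\}$ take a maximum matching; for any vertex $v\notin S$, the star edges containing $S\cup\{v\}$ pairwise intersect in size $\ge d+1$, hence in $\{2d,\dots,(s-1)d\}$ ($s-2$ values), so by Theorem~\ref{thm: deza-E-F} there are at most $n^{s-2}$ of them, and the covering bound forces the matching (equivalently, a family with pairwise intersections exactly $S$) to have size at least $C/(sd)$, which exceeds $sd+1$ once $C$ is chosen large. With that in hand your computation $|A'\cap S|\in L'\cap[0,d]=\{0,d\}$ is correct and $S$ is an atom of size $d$.

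Second, the case $|S|\ge d+1$ is not handled by your closing remark. ``Iterating the link-and-DEF reduction, reducing $s$ by one each time, until the minimal atom around $x$ has size exactly $d$'' does not describe a valid mechanism: if the common core of the star (or the atom containing $x$) has size larger than $d$, no iteration will shrink it to size $d$, and note that a vertex lying in an atom of size $>d$ really does belong to $X_0$ (only atoms of size exactly $d$ make up $X_1$), so no contradiction with $x\in X_0$ is available --- you must bound the degree directly in this case. The direct fix is immediate: if $|S|\ge d+1$, then any two edges through $x$ meet in at least $d+1$ vertices, hence in a size from $\{2d,3d,\dots,(s-1)d\}$, a set of only $s-2$ values, and the first part of Theorem~\ref{thm: deza-E-F} applied to the star alone already gives $\deg_{\mathscr{H}}(x)\le n^{s-2}$. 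Splitting into the two cases $|S|=d$ (your sunflower argument, repaired as above) and $|S|\ge d+1$ (direct DEF bound) yields a complete proof; also make the harmless standing assumptions explicit ($s\ge 2$, $n$ large enough for Theorem~\ref{thm: deza-E-F}, and the degenerate $s=2$ subcases where the intersection-size set becomes empty, which are trivial).
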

By Lemma \ref{lem: X_0 has small number of Cr}, the number of induced copies of $C_r$ contained $X_0$ is at most $O(n^{s-1})$. Hence we have
\begin{equation}\label{eq: only consider X1}
N_{ind}(G,C_r)\leq N_{ind}(G[X_1\cup W],C_r)+O(n^{s-1}).
\end{equation}

(\ref{eq: only consider X1}) is an important equation in the following proof. It indicates that we only need to consider the induced copies of $C_r$ contained in $X_1\cup W$.
By (\ref{eq: only consider X1}) and Corollary~\ref{cor: eG[W] is independent}, we have the following result.

\begin{lemma}\label{lem: W not independent set psi}
    Assume $L = \{\ell_1,\ell_2,\ldots,\ell_s\}$ ($s\ge 2$) and $\ell_1,\ell_2,\ldots,\ell_s,r$ form an arithmetic progression with common difference $d$.
    Let $G$ be the $(C_r,L)$-intersecting graph with $n$ vertices such that there is $W\subseteq \bigcap_{A \in \mathscr{H}^r_{G,C_r}}A  $ and $|W|=\ell_1$. If $E(G[W])\neq\emptyset$, then
    \[
       N_{ind}(G,C_r) \le (1+o(1))\left( \frac{n-\ell_1}{r-\ell_1} \right)^s.
    \]
    Moreover, the above inequality holds if $\ell_1 > sd$.
\end{lemma}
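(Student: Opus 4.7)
The lemma splits into two claims, and the plan is to handle the first with results already in hand and then observe that the hypothesis $\ell_1 > sd$ automatically forces us into the first case, so the ``moreover'' part follows immediately.

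For the first claim, assume $E(G[W]) \neq \emptyset$. Equation~(\ref{eq: only consider X1}) gives
\[
N_{ind}(G, C_r) \le N_{ind}(G[X_1 \cup W], C_r) + O(n^{s-1}),
\]
and Corollary~\ref{cor: eG[W] is independent} bounds the main term by $(1+o(1))\left(\frac{n-\ell_1}{r-\ell_1}\right)^s$. Since $s \ge 2$ and $(n-\ell_1)/(r-\ell_1) = \Theta(n)$, the $O(n^{s-1})$ error is absorbed into the $(1+o(1))$ factor, yielding the stated bound.

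For the ``moreover'' assertion, suppose $\ell_1 > sd$. If $\mathscr{H}^r_{G,C_r}$ is empty there is nothing to prove, so fix any $A \in \mathscr{H}^r_{G,C_r}$. Since $W \subseteq A$ and $G[A] \cong C_r$, the graph $G[W]$ is isomorphic to the induced subgraph of $C_r$ on some $\ell_1$-subset of $V(C_r)$. If we had $E(G[W]) = \emptyset$, this subset would be independent in $C_r$, forcing $\ell_1 \le \lfloor r/2 \rfloor \le r/2$. But $r - \ell_1 = sd$, and $\ell_1 \le r/2$ rearranges to $\ell_1 \le sd$, contradicting the hypothesis. Hence $E(G[W]) \neq \emptyset$ and the first claim applies.

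The only genuinely substantive step is the independence-number dichotomy, which is elementary once one spots that $r - \ell_1 = sd$ makes ``$\ell_1 > sd$'' equivalent to ``$\ell_1 > r/2$''. The heavy lifting---bounding standard induced copies via the Tur\'an-type Lemma~\ref{lem: e HW has edge, Upsilon} and controlling contributions from $X_0$ via Lemma~\ref{lem: X_0 has small number of Cr}---has already been carried out upstream, so no further obstacle is expected.
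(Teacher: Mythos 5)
Your proposal is correct and matches the paper's argument: the main bound is exactly the combination of (\ref{eq: only consider X1}) with Corollary~\ref{cor: eG[W] is independent}, and your independence-in-the-cycle argument for the ``moreover'' part (using $r-\ell_1 = sd$ so that $\ell_1 > sd$ means $\ell_1 > r/2$) is precisely the remark the paper makes after the lemma, with the harmless extra care of treating the case $\mathscr{H}^r_{G,C_r}=\emptyset$ explicitly.
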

{\bf Remark.} If $\ell_1 > sd$, then $E(G[W])\neq\emptyset$ by $W\subseteq V(C_r)$ and $r=\ell_1+sd$, where $C_r$ is an induce cycle of $G$.

\section{Proof of Theorem \ref{thm: the main theorem 2}~(1)}\label{sec: proof AP 1}
In this section, $L=\{\ell_1,\ell_2,\dots,\ell_s\}\subseteq [1,r-1]$~($s > 1$) and $\ell_1,\ell_2,\ldots,\ell_s,r$ form an arithmetic progression with common difference $d=1$. That is, $\ell_{i+1}-\ell_i = d=1, i=1,2,\ldots,s$ and we make a deal that $\ell_{s+1}=r$. Then $r=\ell_1+s$.

For convenience, we rewrite the result here:
$$ (1)~\Psi_r(n,C_r,L)=\left\{
\begin{aligned}
&(1+o(1))\left(\frac{n-\ell_1}{r-\ell_1}\right)^s, \quad~\text{when $(\ell_1,s)\not\in \{(1,3),(2,2)\}$,} \\
&(2+o(1))\left(\frac{n-\ell_1}{r-\ell_1}\right)^s, \quad~\text{when $(\ell_1,s)\in \{(1,3),(2,2)\}$.}
\end{aligned}
\right.
$$

\subsection{The lower bound}\label{sec: LB 1}
We construct a graph $G_{\ell_1,s,1}$ in the following steps.
\begin{enumerate}
    \item Let $U_1, U_2,\ldots, U_r$ be the sets of independent vertices such that there are $\ell_1$ sets which only contain one vertex and each of the remaining sets contains $\lfloor \frac{n-\ell_1}{s} \rfloor$ vertices.
    \item Connect all edges between $U_i$ and $U_{i+1}, i=1,2,\ldots,r$, where we write $U_{r+1} = U_1$.
\end{enumerate}

If $r \ge 5$, then it is easy to verify all induced copies of $C_r$ in $G_{\ell_1,s,1}$ must have exactly one vertex from each $U_i, i=1,2,\ldots,r$. Therefore, $G_{\ell_1,s,1}$ has almost $n$ vertices and approximately $\left(\frac{n-\ell_1}{s} \right)^{s}$ induced copies of $C_r$.
Thus we have $\Psi_r(n, C_r, L) \ge (1+o(1))\left(\frac{n-\ell_1}{r-\ell_1}\right)^s$ when $r \ge 5$.

If $r=4$, then there must be $(\ell_1,s) \in \{(1,3),(2,2)\}$.
When $(\ell_1,s)=(2,2)$,  we let $|U_1|=|U_3|=1$. Then the graph $G_{\ell_1,s,1}$ becomes a complete bipartite graph $K_{2,n-2}$ which has $\binom{n-2}{2} = (2+o(1))\left(\frac{n-\ell_1}{r-\ell_1}\right)^s$ induced copies of $C_4$.

If $(\ell_1,s)=(1,3)$, assume $|U_1|=1$, we  add edges such that $G[U_3]$ forms a clique. Then all induced copies of $C_4$ must have  one of the following forms:
\begin{enumerate}
    \item $u_1u_2u_3u_4u_1, u_i\in U_i,i=1,2,3,4$, or
    \item $u_1u_2u_3u_2'u_1, u_1 \in U_1, u_3\in U_3, u_2,u_2'\in U_2$, or
    \item $u_1u_4u_3u_4'u_1, u_1 \in U_1, u_3\in U_3, u_4,u_4'\in U_4$.
\end{enumerate}
Then the graph  has $(2+o(1))\left(\frac{n-\ell_1}{r-\ell_1}\right)^s$ induced copies of $C_4$.

\subsection{The upper bound}\label{sec: UB 1}

Let $G$ be the extremal $(C_r, L)$-intersecting graph with $n$ vertices maximizing $N_{ind}(G,C_r)$.
That is, $\Psi_r(n,C_r,L) = N_{ind}(G,C_r)$.
By the low bound given in subsection \ref{sec: LB 1} and Theorem \ref{thm: deza-E-F}, we can assume there is $W\subseteq \bigcap_{A \in \mathscr{H}^r_{G,C_r}}A  $ such that $|W|=\ell_1$.
Let $\mathcal{S}, X_1$ be the corresponding sets defined in Section~\ref{sec: atom}.
By Lemma~\ref{lem: W not independent set psi},
 we may assume $W = \{w_1,w_2,\ldots,w_{\ell_1}\}$ is an independent set and $\ell_1 \le sd = s$.

\begin{lemma}\label{lem: entropy count cycle with fix w1}

If $s\geq 2$ and $Y$ is an independent set, then we have
\begin{equation*}
\begin{aligned}
    |\Upsilon(H,X,Y,\mathcal{S}',C_r)|\le \left\{ \begin{aligned}
        &\left( \frac{|\mathcal{S}'|}{s}\right)^{s}, \text{~if $s \ge 4$;}\\
        &2\left( \frac{|\mathcal{S}'|}{s}\right)^{s}, \text{~if $2 \le s \le 3$.}
    \end{aligned}  \right.
\end{aligned}
\end{equation*}
\end{lemma}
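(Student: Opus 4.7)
\emph{Plan.} I would prove the lemma by an entropy argument, essentially re-translating Kr\'al'-Norin-Volec's probabilistic bound for inducibility of cycles~\cite{norin2019bound} into the entropy framework of Section~\ref{sec: preliminary}. Let $C$ be a uniformly random element of $\Upsilon(H,X,Y,\mathcal{S}',C_r)$, and let $(v_0,v_1,\ldots,v_{r-1})$ be a uniformly random dihedral labeling of $V(C)$ chosen independently and conditionally uniform among the $2r$ possibilities. Then $H(v_0,\ldots,v_{r-1}) = \log\bigl(2r\,|\Upsilon(H,X,Y,\mathcal{S}',C_r)|\bigr)$ by Proposition~\ref{prop: entropy support bound}, and the task becomes an upper bound on this entropy.

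\emph{Structural reduction.} Since $Y$ is independent in $H$ and $Y\subseteq V(C)$, the pattern $T=(\mathbf{1}[v_i\in Y])_{i=0}^{r-1}$ places its $\ell_1$ ones as a cyclically independent set, taking only finitely many values independent of $n$; so $H(T)=O(1)$. Conditional on $T$, the assignment of $Y$-values to the $Y$-positions is a permutation of $Y$, contributing at most $\log(\ell_1!)=O(1)$ more via Proposition~\ref{prop: chain rule}. Thus it suffices to bound $H(x_1,\ldots,x_s\mid T,\text{$Y$-labels})$, where $x_1,\ldots,x_s$ are the $X$-atoms of $C$ read in cyclic order from a canonical starting position. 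These $s$ atoms sit in $\ell_1$ arcs (determined by $T$) between consecutive $Y$-vertices, with arc-lengths summing to $s$.

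\emph{Chain-rule estimate.} Applying the chain rule along $C$, I would write $H(x_1,\ldots,x_s\mid\cdot)=\sum_{i=1}^{s} H(x_i\mid x_1,\ldots,x_{i-1},T,\text{$Y$-labels})$. At each step, $v_i$ must be adjacent in $H$ to its cyclic predecessor (another $x$-atom in the same arc, or a fixed $Y$-endpoint of the arc), so Proposition~\ref{prop: entropy support bound} gives $H(x_i\mid\text{past})\le\log d_i$ with $d_i$ the number of admissible atoms. Jensen's inequality yields $\sum_i\log d_i\le s\log\bigl(s^{-1}\sum_i d_i\bigr)$. The crucial estimate is that, because the atoms $x_1,\ldots,x_s$ are distinct elements of $\mathcal{S}'$ and the induced-cycle requirement forbids adjacencies across arcs, the admissible-atom contributions are essentially disjoint, forcing $\sum_i d_i\le (1+o(1))|\mathcal{S}'|$. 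Combining, one gets $H(x_1,\ldots,x_s\mid\cdot)\le s\log(|\mathcal{S}'|/s)+O(1)$, and comparing with $\log(2r|\Upsilon|)$ yields $|\Upsilon|\le(|\mathcal{S}'|/s)^s$ for $s\ge 4$. For $s\in\{2,3\}$, the extremal configurations ($K_{2,n-2}$ for $(\ell_1,s)=(2,2)$ and the tri-partite-plus-clique construction for $(\ell_1,s)=(1,3)$) force an extra factor $2$, which a short direct case analysis in the chain-rule step absorbs.

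\emph{Main obstacle.} The technical difficulty lies in the degree-sum estimate $\sum_i d_i\le(1+o(1))|\mathcal{S}'|$. Arc-boundary $x$-atoms are constrained only to lie in $N_H(y)\cap X$ for some fixed $y\in Y$, which can be much larger than $|\mathcal{S}'|/s$ individually; the tight bound must come from the non-edge constraints across arcs, which force a partition-like structure on the admissible atoms. Translating this structural observation into a controlled degree-sum is where the KNV-style two-directional walk around the cycle~\cite{norin2019bound} becomes essential. This is the step I expect to require the most careful combinatorial bookkeeping, especially in separating the generic regime $s\ge 4$ (where the tight constant $1$ holds) from the small cases $s\in\{2,3\}$ (where symmetric short-arc configurations force the constant $2$).
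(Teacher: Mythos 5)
Your high-level strategy (entropy re-encoding of the KNV cycle count) is the same as the paper's, but as written the argument has two genuine gaps, and they sit exactly where the lemma's content lies. First, your ``crucial estimate'' $\sum_i d_i\le(1+o(1))|\mathcal{S}'|$ is false for a one-directional scan of the cycle: the first admissible set after the canonical start is (essentially) $N_H(y_1)\cap X$ and the closing set is again contained in $N_H(y_1)$, so the sum is typically about $|X|+|N_H(y_1)|$, which already in the extremal blow-up configurations exceeds $|\mathcal{S}'|$ by a factor $\frac{s+2}{s}$. You correctly flag the two-directional walk as the fix, but that is precisely the proof, not a remark: the paper runs the chain rule in both directions around the cycle, pairs the forward and backward conditional-support sets $\alpha_i^+,\alpha_i^-$, uses that $\{\alpha_i^{\pm}\}_{3\le i\le r-1}$ are disjoint subsets of $V(H)\setminus N_H[y_1]$ and that $\alpha_r^+,\alpha_r^-\subseteq N_H(y_1)$ are disjoint exactly when $s\ge4$, and only then does AM--GM give the constant $1$; when $s\in\{2,3\}$ the failure of $\alpha_r^+\cap\alpha_r^-=\emptyset$ is the sole source of the factor $2$, which your ``short direct case analysis'' does not supply.

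Second, your bookkeeping of the $Y$-part loses the sharp constant even if the degree-sum bound were available. Conditioning on the cyclic pattern $T$ of $Y$-positions and on the permutation assigning $Y$-labels costs $H(T)+\log(\ell_1!)$, an uncontrolled additive constant that is not cancelled by the $\log(2r)$ normalization from the dihedral labelings; you would only conclude $|\Upsilon(H,X,Y,\mathcal{S}',C_r)|\le C\left(\frac{|\mathcal{S}'|}{s}\right)^s$ with some $C=C(r,\ell_1)$ possibly much larger than $1$ (or $2$), whereas the lemma asserts the constants $1$ and $2$ exactly, and Theorem~\ref{thm: the main theorem 2}~(1) hinges on distinguishing them. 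The paper avoids this slop entirely by the ``vanishing'' reduction: since $Y$ is independent, each $y_i$ with $i\ge2$ is contracted (joining its two cycle-neighbours), which reduces the lemma to $\ell_1=1$ with no loss; after that the only overhead is the fixed start $y_1$ and the two traversal directions, and these cancel exactly. To repair your write-up you would need both this reduction (or an equally tight way to handle the $Y$-pattern) and a full execution of the two-directional disjointness argument, including the $s\in\{2,3\}$ split.
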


\noindent
\begin{proof}
Note that $\mathcal{S}'$ is the set of all singleton sets in $X$ by $d=1$.
Assume $Y = \{y_1,y_2,\ldots,y_{\ell_1}\}$.
We first define an operation called \textbf{vanishing}.
Let $Van(H)$ be the graph with vertex set $X\cup\{y_1\}$ and edge set
$$\begin{array}{rcl}
E(H[X\cup\{y_1\}]) &\cup &\{uv: \exists C \in \Upsilon(H, X, Y,  \mathcal{S}', C_r), \exists i \in \{2,\ldots,\ell_1\} \\
& &\text{~s.t.~}uy_i,vy_i \in E(H), u,v \in V(C)\cap X \}.
\end{array}$$
See Figure~\ref{fig: vanish} for an example when $r=12$ and $\ell_1=4$.
If $\ell_1=1$, then $Van(H) = H$.
Since $Y$ is an independent set, for each $C \in \Upsilon(H, X, Y, \mathcal{S}', C_r)$, it corresponds to a unique standard induced copy of $C_{1+sd}$ in $\Upsilon(Van(H), X, \{y_1\}, \mathcal{S}', C_{1+sd})$.
Thus, we have that
\begin{equation}\label{eq: vanish H}
\begin{aligned}
    |\Upsilon(H, X, Y,  \mathcal{S}', C_r)| \le & |\Upsilon(Van(H), X, \{y_1\}, \mathcal{S}', C_{1+sd})|.
\end{aligned}
\end{equation}

\begin{figure}[t]
	\centering
	\includegraphics[width=0.8\linewidth]{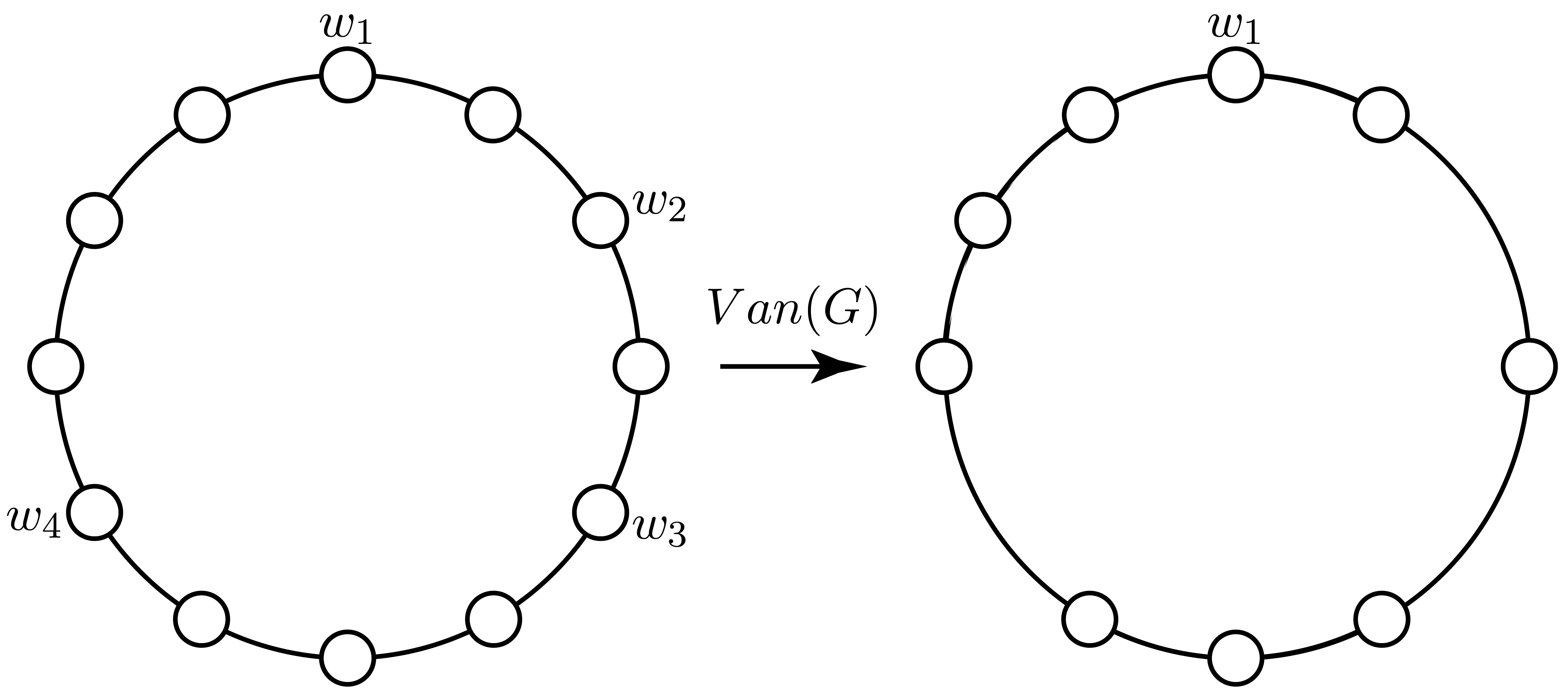}
	\caption{An example of an induced copy of $C_r$ after vanishing, when $r=12$ and $\ell_1=4$.}\label{fig: vanish}
\end{figure}

Hence, it suffices to prove the lemma when $\ell_1=1$.
In this case, $s=r-1$, $|\mathcal{S}'|=|V(H)|-1$ and
 $Y = \{y_1\}$.
    We use the entropy method to prove this.
    Note that every induced copies of $C_{r}$ in $\Upsilon(H,X,Y,\mathcal{S}',C_r)$ must contain $y_1$.
    We say an ordered vertex tuple $(u_1,u_2,u_3,\ldots,u_{r})$ is an \textbf{good tuple} if $u_1=y_1$ and $y_1u_2u_3\ldots u_{r}y_1$ is a standard induced copy of $C_r$ with $(X,Y,\mathcal{S}')$. Note that each $C \in \Upsilon(H,X,Y,\mathcal{S}',C_r)$ corresponds to exactly two good tuples.

    Let $Q = (Q_1, Q_2,\ldots, Q_{r})$ be a random variable chosen uniformly from all good tuples.
    By Propositions~\ref{prop: chain rule} and \ref{prop: entropy support bound}, the entropy of $Q$ is
    \begin{equation*}
        H(Q) = \log(2|\Upsilon(H,X,Y,\mathcal{S}',C_r)|) =H(Q_1) + H(Q_2\mid Q_1)+\ldots+H(Q_{r} \mid Q_{<r}),
    \end{equation*}
    where $Q_{<i}$ denotes $(Q_1,Q_2,\ldots,Q_{i-1})$ for $1 \le i \le r$.
    Note that $Q_1=y_1$ must hold. So by Proposition~\ref{prop: entropy support bound}, we have $H(Q_1) = 0$ and $H(Q_2\mid Q_1) \le \log |N_{H}(y_1)|$.

    For any $3 \le i \le r$, let $\beta(x_1,x_2,\ldots,x_{i-1})$ denote the number of good tuples $(u_1,u_2,\ldots,u_{r})$ such that $u_j=x_j$ for all $ 1 \le j \le i-1$.
    Let
    $$\begin{array}{rcl}
    \alpha(x_1,\ldots,x_{i-1})&=&\{y\in V(H) \mid \mbox{there exists a good tuple $(u_1,\ldots,u_{r})$ }\\
      & &\mbox{such that $u_j=x_j$ for all $ 1 \le j \le i-1$ and $u_i=y$ }\}.
    \end{array}$$
    Then we have
    \begin{equation*}
    \begin{aligned}
        H(Q_i\mid Q_{<i})& = \sum_{x_1,\ldots,x_{i-1} \in V(H)}\mathbb{P}(Q_j=x_j, j=1,\ldots,i-1) H(Q_i\mid Q_j=x_j,j=1,\ldots, i-1) \\
        & \le \sum_{x_1,\ldots,x_{i-1} \in V(H)} \frac{\beta(x_1,\ldots,x_{i-1})}{2|\Upsilon(H,X,Y,\mathcal{S}',C_r)|} \log |\alpha(x_1,\ldots,x_{i-1})|\\
        & = \sum_{x_1,\ldots,x_{i-1} \in V(H)} \sum_{(u_1,\ldots,u_{r})\text{is a good tuple}}\frac{\mathrm{1}_{u_j=x_j,j=1,2,\ldots,i-1}}{2|\Upsilon(H,X,Y,\mathcal{S}',C_r)|} \log |\alpha(x_1,\ldots,x_{i-1})| \\
        & = \sum_{(u_1,\ldots,u_{r})\text{ is a good tuple}} \sum_{x_1,\ldots,x_{i-1} \in V(H)} \frac{\mathrm{1}_{u_j=x_j,j=1,2,\ldots,i-1}}{2|\Upsilon(H,X,Y,\mathcal{S}',C_r)|} \log |\alpha(x_1,\ldots,x_{i-1})| \\
        & = \sum_{(u_1,\ldots,u_{r})\text{ is a good tuple}} \frac{1}{2|\Upsilon(H,X,Y,\mathcal{S}',C_r)|} \log |\alpha(u_1,\ldots,u_{i-1})|.\\
    \end{aligned}
    \end{equation*}
Thus,
\begin{equation}\label{eq: entropy 1}
\begin{aligned}
    H(Q) & = \log(2|\Upsilon(H,X,Y,\mathcal{S}',C_r)|) =H(Q_1) + H(Q_2\mid Q_1)+\ldots+H(Q_{r} \mid Q_{<r})\\
    & \le \log|N_{H}(y_1)|+\sum_{i=3}^{r}\sum_{(u_1,\ldots,u_{r})\text{ is a good tuple}} \frac{1}{2|\Upsilon(H,X,Y,\mathcal{S}',C_r)|} \log |\alpha(u_1,\ldots,u_{i-1})|\\
    & = \sum_{(u_1,\ldots,u_{r})\text{ is a good tuple}}\frac{1}{2|\Upsilon(H,X,Y,\mathcal{S}',C_r)|} \left(\log|N_{H}(y_1)|+  \sum_{i=3}^{r} \log |\alpha(u_1,\ldots,u_{i-1})| \right)\\
    & = \sum_{(u_1,\ldots,u_{r})\text{ is a good tuple}}\frac{1}{4|\Upsilon(H,X,Y,\mathcal{S}',C_r)|} \left(2\log|N_{H}(y_1)|+  \sum_{i=3}^{r} \log |\alpha_i^+| + \sum_{i=3}^{r} \log |\alpha_i^-| \right) \\
\end{aligned}
\end{equation}
where $\alpha_i^+ := \alpha(u_1,\ldots,u_{i-1})$ and $\alpha_i^- := \alpha(u_1, u_{r},\ldots,u_{r-i+3})$, $i=3,4,\ldots,r$.
Now we fix a good tuple $(u_1,u_2,\ldots,u_{r})$. First note that $\alpha_{r}^+, \alpha_{r}^- \subseteq N_{H}(y_1)$.
And $\{\alpha_i^+\}_{3 \le i \le r-1}$ are disjoint sets satisfying that $\alpha_i^+ \subseteq V(H) \setminus N_{H}[y_1]$, where $N_{H}[y_1]=N_{H}(y_1)\cup\{y_1\}$. Similarly, we have that $\{\alpha_i^-\}_{3 \le i \le r-1}$ are disjoint sets satisfying that $\alpha_i^- \subseteq V(H) \setminus N_{H}[y_1]$.

\textbf{Case 1: $s \ge 4$}.

In this case, we have that $\alpha_{r}^+ \cap \alpha_{r}^- = \emptyset$ by the definitions.
By AM-GM inequality, we have that
\begin{equation}\label{eq: entropy 2}
\begin{aligned}
    & 2\log\frac{|N_{H}(y_1)|}{2}+  \sum_{i=3}^{r} \log |\alpha_i^+| + \sum_{i=3}^{r} \log |\alpha_i^-|\\
    \le & \log \left( \frac{ |N_{H}(y_1)|+ \sum_{i=3}^{r} |\alpha_i^+| + \sum_{i=3}^{r} |\alpha_i^-|}{2(r-1)} \right)^{2(r-1)} \\
    \le & \log \left( \frac{ 2|N_{H}(y_1)|+ 2(|V(H)| - |N_{H}[y_1]|)}{2(r-1)} \right)^{2(r-1)}\\
    \le & \log \left( \frac{ |V(H)| - 1}{r-1} \right)^{2(r-1)}.
\end{aligned}
\end{equation}

Combining (\ref{eq: entropy 1}) and (\ref{eq: entropy 2}), we have that
\begin{equation*}\label{eq: entropy 3}
\begin{aligned}
    & H(Q) = \log(2|\Upsilon(H,X,Y,\mathcal{S}',C_r)|)\\
    \le & \sum_{(u_1,\ldots,u_{r})\text{ is a good tuple}} \frac{1}{4|\Upsilon(H,X,Y,\mathcal{S}',C_r)|}\left(2\log 2 + \log \left( \frac{ |V(H)| -1}{r-1} \right)^{2(r-1)} \right) \\
    = & \log 2 + \log \left( \frac{ |V(H)| - 1}{r-1} \right)^{(r-1)}\\
    = & \log 2 + \log \left( \frac{ |\mathcal{S}'|}{s} \right)^{s},
\end{aligned}
\end{equation*}
which proves the lemma in this case.

\textbf{Case 2: $s \in \{2,3\}$}.

In this case, we can not guarantee $\alpha_{r}^+ \cap \alpha_{r}^- = \emptyset$.
Therefore, we can only have the following estimation:
\begin{equation*}
\begin{aligned}
    & 2\log\frac{|N_{H}(y_1)|}{2}+  \sum_{i=3}^{r-1} \log |\alpha_i^+| + \sum_{i=3}^{r-1} \log |\alpha_i^-| + \log \frac{|\alpha_{r}^+|}{2} + \log \frac{|\alpha_{r}^-|}{2} \le \log \left( \frac{ |V(H)| - 1}{r-1} \right)^{2(r-1)},
\end{aligned}
\end{equation*}
by a similar argument with (\ref{eq: entropy 2}) and (\ref{eq: entropy 3}).
And it leads to the result that
\[
|\Upsilon(H,X,Y,\mathcal{S}',C_r)| \le 2\left( \frac{ |V(H)| - 1}{r-1} \right)^{(r-1)} \le 2\left( \frac{ |\mathcal{S}'|}{s} \right)^{s}.
\]
\end{proof}

By Lemma~\ref{lem: entropy count cycle with fix w1}, (\ref{eq: ub S}), (\ref{eq: Upsilon with scr H}) and (\ref{eq: only consider X1}), we can easily have that $\Psi_r(n,C_r,L) \le \left( \frac{n-\ell_1}{r-\ell_1} \right)^{s}$ when $s \ge 4$ and $\Psi_r(n,C_r,L) \le 2\left( \frac{n-\ell_1}{r-\ell_1} \right)^{s}$ when $(\ell_1,s) \in \{ (1,3), (2,2) \}$.
The unsolved situation is $(\ell_1,s)\in \{(2,3), (3,3)\}$, because we need to show that $|\Upsilon(H,X,Y,\mathcal{S}',C_r)|\le \left( \frac{n-\ell_1}{r-\ell_1} \right)^{s}$ in these two cases.


When $(\ell_1,s) = (3,3)$, let $W=\{w_1,w_2,w_3\}$, $V_1 = \{v \in V(G)\setminus W \mid vw_1,vw_2 \in E(G), vw_3 \notin E(G)\}$, $V_2 = \{v \in V(G)\setminus W \mid vw_2,vw_3 \in E(G), vw_1 \notin E(G)\}$ and $V_3 = \{v \in V(G)\setminus W \mid vw_3,vw_1 \in E(G), vw_2 \notin E(G)\}$.
Note that $V_1, V_2, V_3$ are pairwise disjoint sets. Since $W$ is an independent set, any induced copy of $C_6$ must be of the form $w_1u_1w_2u_2w_3u_3w_1$ where $u_i\in V_i, i=1,2,3$. Then the number of induced copies of $C_6$ is at most $|V_1||V_2||V_3| \le \left( \frac{n}{3} \right)^3 = (1+o(1))\left( \frac{n-\ell_1}{r-\ell_1} \right)^{s}$.

When $(\ell_1,s) = (2,3)$, let $W=\{w_1,w_2\}$, $V_1 = \{v \in V(G)\setminus W \mid vw_1,vw_2 \in E(G)\}$, $V_2 = \{v \in V(G)\setminus W \mid vw_2 \in E(G), vw_1 \notin E(G)\}$ and $V_3 = \{v \in V(G)\setminus W \mid vw_1\in E(G), vw_2 \notin E(G)\}$. Also, $V_1, V_2, V_3$ are pairwise disjoint sets, and any induced copy of $C_5$ must be of the form $w_1u_1w_2u_2u_3w_1$ where $u_i\in V_i, i=1,2,3$. Then the number of induced copies of $C_5$ is at most $|V_1||V_2||V_3| \le \left( \frac{n}{3} \right)^3 = (1+o(1))\left( \frac{n-\ell_1}{r-\ell_1} \right)^{s}$.

As a conclusion, now we entirely prove the upper bound when $d=1$.

\section{Proof of Theorem \ref{thm: the main theorem 2} (2)-(4)}\label{sec: proof AP 2-4}

In this section, $L=\{\ell_1,\ell_2,\dots,\ell_s\}\subseteq [1,r-1]$~($s > 1$) and $\ell_1,\ell_2,\dots,\ell_s,r$ form an arithmetic progression with common difference $d\ge 2$. That is, $\ell_{i+1}-\ell_i = d, i=1,2,\ldots,s$ and we make a deal that $\ell_{s+1}=r$. Then $r=\ell_1+ds$.

For convenience, we rewrite the result here.
\begin{enumerate}
    \item[(2)] If $d \ge 2$, then we first have a general upper and lower bound,
    $$ (1+o(1))\left(\frac{n-\ell_1}{r-\ell_1}\right)^s \le \Psi_r(n,C_r,L)\le (2+o(1))\left(\frac{n-\ell_1}{r-\ell_1}\right)^s.$$
    \item[(3)] If $d\geq 2$ and $\ell_1 > sd$, then
    $$ \Psi_r(n,C_r,L)= (1+o(1))\left(\frac{n-\ell_1}{r-\ell_1}\right)^s.$$
    \item[(4)] If $d\geq 2$, $s=2$, $\ell_1 < 2d$ and $ \ell_1 \in \{1,2\}$, then
    $$ \Psi_r(n,C_r,L)= (2+o(1))\left(\frac{n-\ell_1}{r-\ell_1}\right)^s.$$
\end{enumerate}

\subsection{ The lower bounds}\label{sec: LB 2}

When $d\geq 2$, the  graph $G_{\ell_1,s,d}$ is constructed as follows.

\begin{enumerate}
    \item Let $I \subseteq [\ell_1+s]$ be a set with size $|I| = \ell_1$.
    \item For $i \in I$, let $U_i=\{u_i\}$ and for $i \notin I$, let $U_i$ be the set of $k$ disjoint paths, say $\{\mathcal{P}_{i,1}, \mathcal{P}_{i,2}, \ldots, \mathcal{P}_{i,k}\}$ where $k = \lfloor \frac{n-\ell_1}{sd} \rfloor$ and $|V(\mathcal{P}_{i,j})|= d$ for $1\le j\le k$. Assume two end points of $\mathcal{P}_{i,j}$ is $u_{i,j}$ and $u'_{i,j}$.
    \item For each $i$ and $i+1$~(in modulo $\ell_1+s$),
    \begin{enumerate}
        \item if $i, i+1 \in I$, then connect $u_i$ and $u_{i+1}$;
        \item if $i\in I, i+1 \notin I$, then connect $u_i$ and $u_{i+1,j}$ for all $1 \le j \le k$;
        \item if $i\notin I, i+1 \in I$, then connect $u_{i+1}$ and $u'_{i,j}$ for all $1 \le j \le k$;
        \item if $i, i+1 \notin I$, then connect all $u'_{i,j_1}$ and $u_{i+1,j_2}$ for all $1 \le j_1,j_2 \le k$.
    \end{enumerate}
\end{enumerate}

An example when $\ell_1=2$, $s=3$ and $I = \{1,3\}$ is shown as Figure~\ref{fig: lb d ge 2 s ge 3}.
It is easy to verify that $G_{\ell_1,s,d}$ is $(C_r,L)$-intersecting with $(1+o(1))\left(\frac{n-\ell_1}{r-\ell_1}\right)^s$ induced copies of $C_r$. It proves that \[
\Psi_r(n,C_r,L) \ge  (1+o(1))\left(\frac{n-\ell_1}{r-\ell_1}\right)^s.
\]
\begin{figure}[t]
    \centering
    \includegraphics[width=0.5\linewidth]{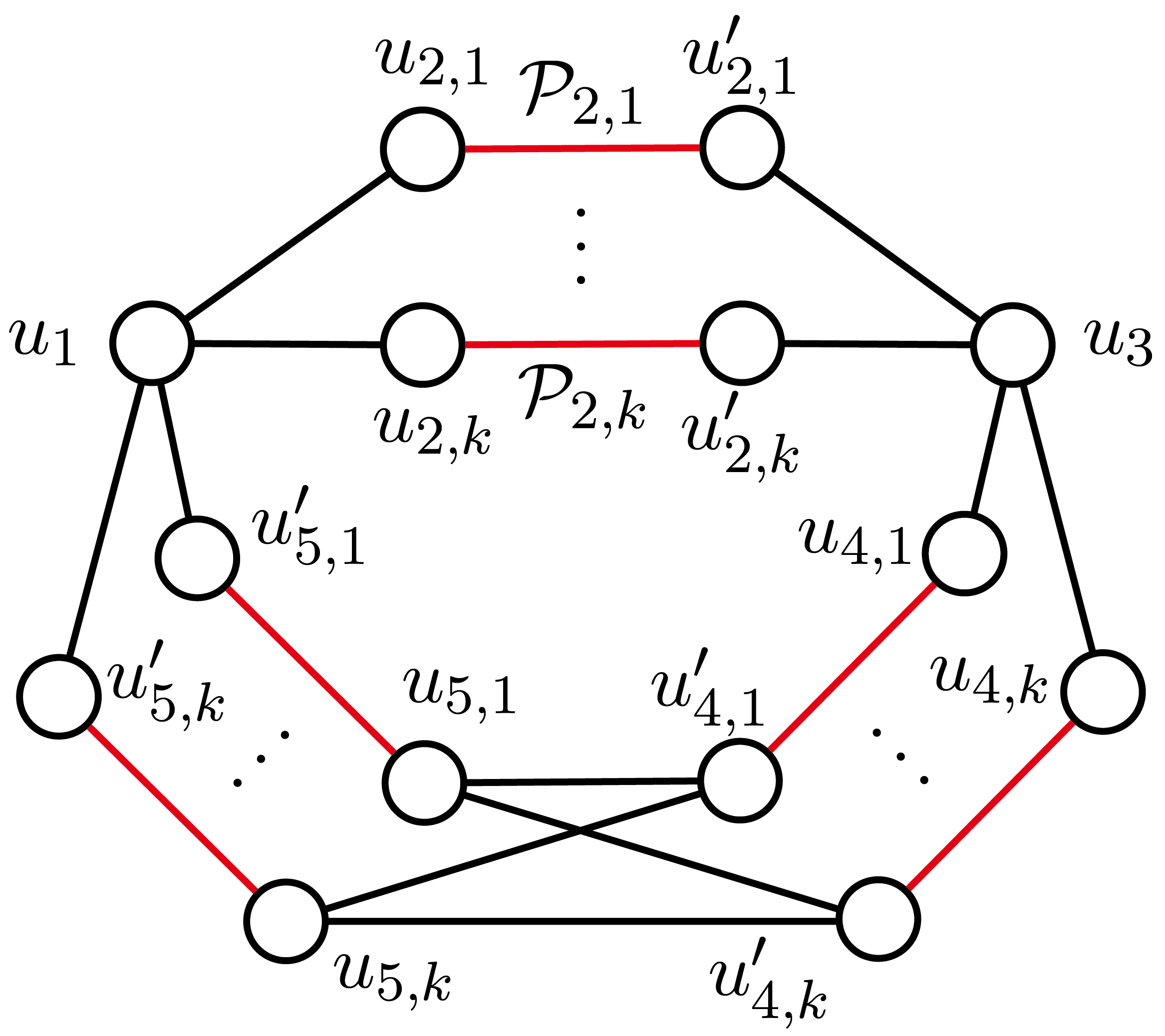}
    \caption{The sketch of $G_{\ell_1,s,d}$ when $\ell_1=2$, $d = 2$, $s=3$ and $I = \{1,3\}$. Each black line is an edge and each red line is a path with $d$ vertices. }
    \label{fig: lb d ge 2 s ge 3}
\end{figure}
So far, we have proved the lower bound of Theorem~\ref{thm: the main theorem 2}~(2) and (3).

For the lower bound of Theorem~\ref{thm: the main theorem 2}~(4) ($s=2$ and $\ell_1 =1$), let $I=\{1\}$ in the above construction and then let $\{u'_{2,j}\}_{1\le j \le k}$ and $\{u_{3,j}\}_{1\le j \le k}$ all form a clique. Then the result graph is $(C_r,L)$-intersecting with approximately $n$ vertices and $(2+o(1))\left(\frac{n-\ell_1}{r-\ell_1}\right)^s$ induced copies of $C_r$.
See Figure~\ref{fig: lb s=2 l=1} for an example.

For the lower bound of Theorem~\ref{thm: the main theorem 2}~(4) ($s=2$ and $\ell_1=2$), let $I = \{1,3\}$ in the above construction and then the result graph is $(C_r,L)$-intersecting with approximately $n$ vertices and $(2+o(1))\left(\frac{n-\ell_1}{r-\ell_1}\right)^s$ induced copies of $C_r$.
See Figure~\ref{fig: lb s=2 l=2} for an example.

\begin{figure}[t]
	\centering         
	\begin{minipage}{0.45\linewidth}
		\centering         
		\includegraphics[width=\linewidth]{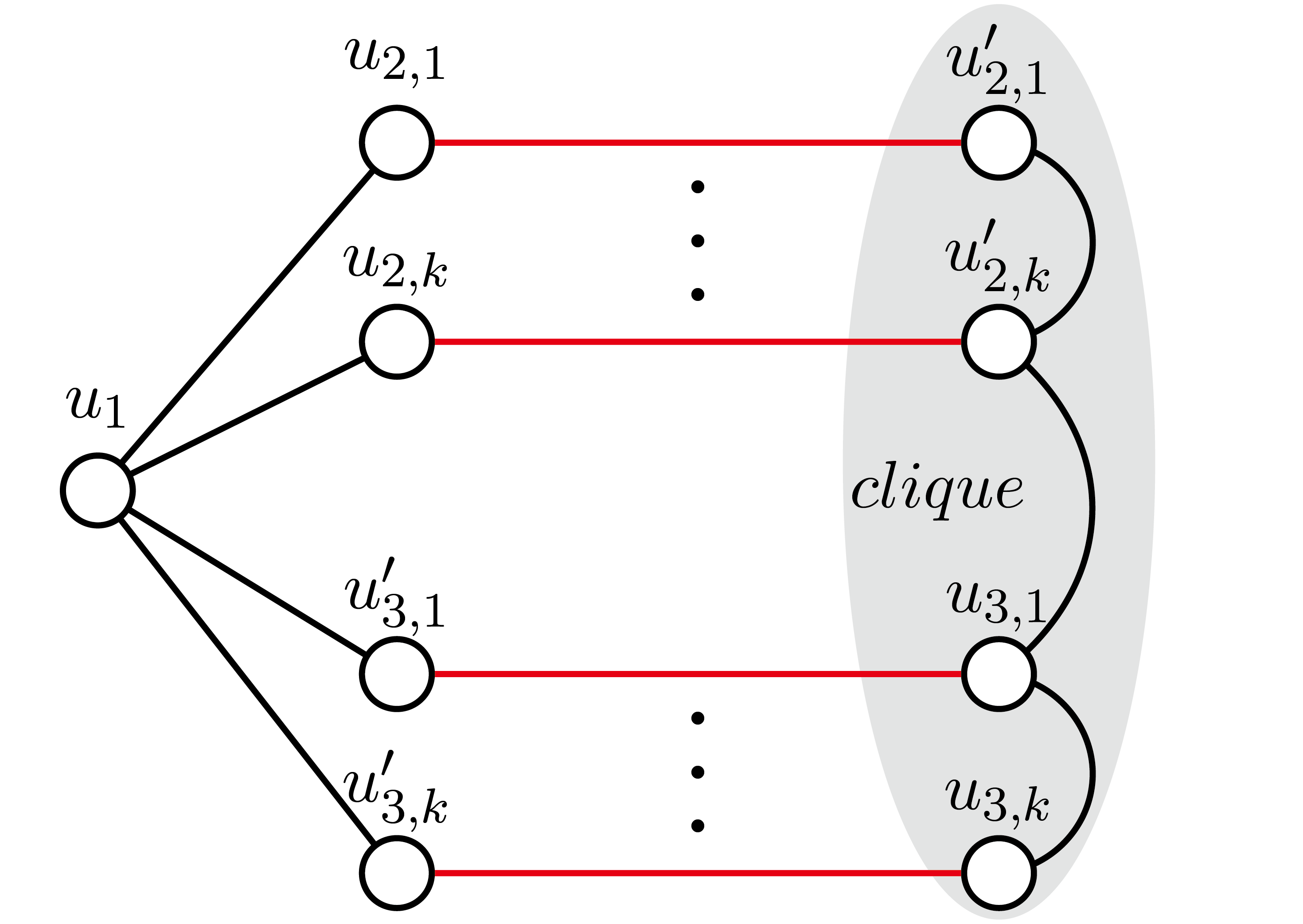}
		\caption{Extremal graph when $s=2$ and $\ell_1=1$. The grey part is a clique.}\label{fig: lb s=2 l=1}
	\end{minipage}
	\begin{minipage}{0.45\linewidth}
		\centering         
		\includegraphics[width=\linewidth]{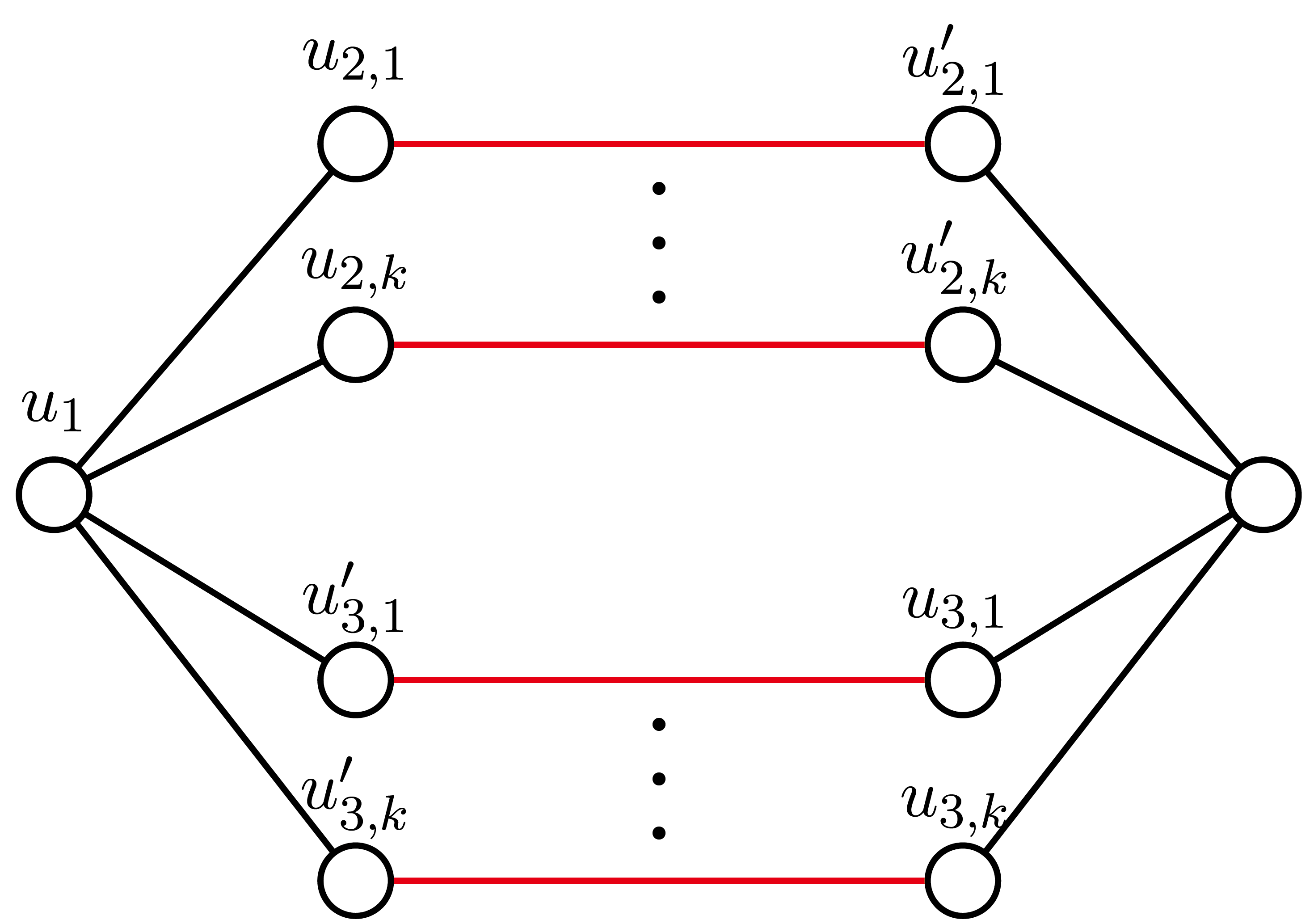}
		\caption{Extremal graph when $s=2$ and $\ell_1=2$.}\label{fig: lb s=2 l=2}
	\end{minipage}
\end{figure}

\subsection{The upper bounds}\label{sec: ub 2-4}

Let $G$ be the extremal $(C_r, L)$-intersecting graph with $n$ vertices maximizing $N_{ind}(G,C_r)$.
That is, $\Psi_r(n,C_r,L) = N_{ind}(G,C_r)$. By the low bound given in subsection \ref{sec: LB 2} and Theorem \ref{thm: deza-E-F}, we can assume there is $W\subseteq \bigcap_{A \in \mathscr{H}^r_{G,C_r}}A  $ such that $|W|=\ell_1$.
Let $\mathcal{S}, X_1$ be the corresponding sets defined in Section~\ref{sec: atom}.

The upper bound of Theorem~\ref{thm: the main theorem 2}~(3) comes from Lemma~\ref{lem: W not independent set psi}.
Hence, we only need to prove the upper bound of Theorem~\ref{thm: the main theorem 2}~(2).
By Lemma~\ref{lem: W not independent set psi}, it suffices to prove when $d\ge 2$, $W$ is an independent set and $\ell_1 \le sd$.
The following lemma is the key to prove the upper bound.
The idea is similar to Lemma~\ref{lem: entropy count cycle with fix w1}, but here we have an extra factor $2$ in the entropy estimation.

\begin{lemma}\label{lem: upper bound Upsilon}
    When $d\geq 2$, $s\geq 3$ and $Y$ is an independent set, $|\Upsilon(H,X,Y,\mathcal{S}',C_r)| \leq (2+o(1))\left(\frac{|\mathcal{S}'|}{s}\right)^s$.
\end{lemma}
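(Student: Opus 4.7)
The plan is to mirror the entropy-based strategy of Lemma~\ref{lem: entropy count cycle with fix w1}, but carried out at the level of atoms rather than individual vertices, since each atom of size $d \ge 2$ must appear as a block of $d$ consecutive vertices on any standard induced $C_r$.

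First, I would perform the same vanishing reduction used in the $d=1$ case: merging $y_2, \ldots, y_{\ell_1}$ into $y_1$ (which is legitimate because $Y$ is independent) produces a graph $\mathrm{Van}(H)$ in which each element of $\Upsilon(H, X, Y, \mathcal{S}', C_r)$ corresponds injectively to a standard induced cycle of length $sd+1$ through $y_1$. This reduces the problem to bounding the number of such cycles, with the rest of the proof carried out in $\mathrm{Van}(H)$.

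Second, I would set up the entropy method on ordered atom-sequences. For each standard induced cycle in $\mathrm{Van}(H)$ and each of its two orientations starting from $y_1$, record the sequence $(S_{j_1}, \ldots, S_{j_s}) \in (\mathcal{S}')^s$ of atoms encountered. Letting $Q = (Q_1, \ldots, Q_s)$ be uniform over all such ordered atom-sequences, the chain rule yields $H(Q) = \log(2\,|\Upsilon|) + O(1)$, where the $O(1)$ term absorbs the bounded number of induced $C_r$'s lifting a given atom-sequence (the atom-maximality property forces the internal traversal of each size-$d$ atom to be essentially unique up to direction). I would then bound $H(Q_i \mid Q_{<i})$ by the expected log of the size of a set $\alpha_i^+$ of valid next atoms, and introduce the reverse sets $\alpha_i^-$ by reading the cyclic sequence backwards, exactly as in the vertex-level argument.

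Third, I would finish with AM-GM. The intermediate sets $\alpha_i^{\pm}$ (for $2 \le i \le s-1$) should be pairwise disjoint subsets of $\mathcal{S}'\setminus\{Q_1\}$ along any fixed atom-sequence, while the terminal sets $\alpha_s^{+}, \alpha_s^{-}$ consist of atoms adjacent to $y_1$ through some entry vertex. Applying AM-GM across the interior and terminal contributions and using $|\mathcal{S}'| \le (n-\ell_1)/d$ will yield the desired bound $(2+o(1))(|\mathcal{S}'|/s)^s$. The main obstacle, exactly as in Case~2 of Lemma~\ref{lem: entropy count cycle with fix w1}, is that we cannot guarantee $\alpha_s^+ \cap \alpha_s^- = \emptyset$, and it is precisely this failure that produces the extra factor of $2$ in the statement. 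A secondary technical point is verifying that the lifting multiplicity from atom-sequences back to cycles is only $O(1)$; this will follow from showing that each atom admits essentially a unique induced Hamilton path between its two ``boundary'' vertices, which in turn is forced by atom-maximality together with the requirement that $S$ appear as $d$ consecutive vertices of some induced copy of $C_r$.
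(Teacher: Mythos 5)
Your argument rests on the premise, stated at the outset, that each atom of size $d\ge 2$ must appear as a block of $d$ \emph{consecutive} vertices on any standard induced $C_r$. This is false: being standard only requires $V(C)=\bigl(\bigcup_{i=1}^{s}S_i\bigr)\cup Y$ for some parts $S_i\in\mathcal{S}'$, and the atom property is a purely set-theoretic condition (each cycle's vertex set either contains an atom or misses it); nothing forces the vertices of an atom to be consecutive along the cycle. Indeed, in the paper's own extremal constructions (Sections 6 and 7) the size-$d$ classes that play the role of atoms are interleaved around the cycle with other vertices. Once consecutiveness fails, the objects your entropy argument is built on collapse: there is no well-defined ``sequence of atoms encountered'' along an orientation (a cycle may enter and leave an atom several times), atoms have no ``entry/exit'' or ``boundary'' vertices and no induced Hamilton path to speak of, the interior sets $\alpha_i^{\pm}$ need not avoid $N(y_1)$ nor be disjoint, and you miss entirely the case in which a single atom contains \emph{both} neighbours of $y_1$ on the cycle (the paper's class $\mathcal{S}'_2(y_1)$), which cannot be encoded as ``first and last atom'' in your scheme. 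The paper's proof is organised precisely around this difficulty: after vanishing it partitions $\mathcal{S}'$ into $\mathcal{S}'_0(y_1),\mathcal{S}'_1(y_1),\mathcal{S}'_2(y_1)$ according to how many vertices of an atom are adjacent to $y_1$, proves by an extremal duplication/exchange argument (Claim~\ref{claim: either S_1 or S_2}) that one may assume $\mathcal{S}'_1(y_1)=\emptyset$ or $\mathcal{S}'_2(y_1)=\emptyset$, handles the first case via Lemma~\ref{lem: e HW has edge, Upsilon}, and in the second case runs the entropy argument on a canonical ``good decomposition'' whose ordering of $S_3,\dots,S_s$ is defined by a path-growing rule that is what actually yields disjointness of the $\alpha_i$'s for non-consecutive atoms; the factor $2$ there comes from bounding the unordered pair of $\mathcal{S}'_1(y_1)$-atoms by $|\mathcal{S}'_1(y_1)|^2/2$, not from an overlap $\alpha_s^+\cap\alpha_s^-$ as you predict.

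A secondary problem: you allow $H(Q)=\log\bigl(2|\Upsilon(H,X,Y,\mathcal{S}',C_r)|\bigr)+O(1)$, with the $O(1)$ absorbing a bounded lifting multiplicity. An additive $O(1)$ in entropy is a multiplicative constant in the count, so it would only give $O(1)\cdot(|\mathcal{S}'|/s)^s$, not $(2+o(1))(|\mathcal{S}'|/s)^s$; the constant must be exact. (In fact no slack is needed, since an induced cycle is determined by its vertex set, so the correspondence is exactly one- or two-to-one --- but your justification for the multiplicity again invokes the nonexistent consecutive-block structure.) As written, the proposal does not prove the lemma; the missing ingredients are the treatment of non-consecutive atoms, the $\mathcal{S}'_1/\mathcal{S}'_2$ dichotomy, and an ordering of the atoms for which the conditional-entropy supports are genuinely disjoint.
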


\begin{proof}
We do the operation vanishing on $H$.
By (\ref{eq: vanish H}),  it suffices to prove the lemma when $\ell_1=1$.
Assume $Y = \{y_1\}$ and define

\begin{equation*}
    \begin{aligned}
        \mathcal{S}'_0(y_1):=&\{A\in \mathcal{S}':~ d_{G[Y\cup A]}(y_1)=0 \}, \\
        \mathcal{S}'_1(y_1):=&\{A\in \mathcal{S}':~ d_{G[Y\cup A]}(y_1)=1 \},\\
        \mathcal{S}'_2(y_1):=&\{A\in \mathcal{S}':~ d_{G[Y\cup A]}(y_1)=2 \}.
    \end{aligned}
\end{equation*}
Then $\mathcal{S}'_0(y_1)\cup \mathcal{S}'_1(y_1)\cup \mathcal{S}'_2(y_1) \subseteq \mathcal{S}'$.
For any $S \in \mathcal{S}' \setminus (\mathcal{S}'_0(y_1)\cup \mathcal{S}'_1(y_1)\cup \mathcal{S}'_2(y_1))$, no standard induced copy of $C_r$ in $\Upsilon(H,X,Y,\mathcal{S}',C_r)$ contains $S$.
Also, note that no induced cycle in $\Upsilon(H,X,Y,\mathcal{S}',C_r)$ contains two elements in $\mathcal{S}'_2(y_1)$; otherwise, $y_1$ has degree at least $4$ in an induced $C_r$, a contradiction. Similarly, there are no induced cycle in $\Upsilon(H,X,Y,\mathcal{S}',C_r)$ contains one element in $\mathcal{S}_1'(y_1)$ and one element in $\mathcal{S}_2'(y_1)$.

In the following discussion, taking $\ell_1, s,d, |\mathcal{S}'|$ as fixed constants, we may assume that $(H,X,Y,\mathcal{S}')$ maximizes $|\Upsilon(H,X,Y,\mathcal{S}',C_r)|$ and then minimizes $\min \{|\mathcal{S}_1'(y_1)|, |\mathcal{S}_2'(y_1)|\}$.
For a set $A\in \mathcal{S}'$, let $d_{\Upsilon(H,X,Y,\mathcal{S}',C_r)}(A)$ be the number of induced copies of $C_r$ in $\Upsilon(H,X,Y,\mathcal{S}',C_r)$ contained $A$.

\begin{claim}\label{claim: either S_1 or S_2}
    Either $\mathcal{S}'_1(y_1)=\emptyset$ or $\mathcal{S}'_2(y_1)=\emptyset$.
\end{claim}

\noindent
\textbf{Proof of Claim~\ref{claim: either S_1 or S_2}}
    Suppose $\mathcal{S}'_1(y_1)\neq \emptyset$ and $\mathcal{S}'_2(y_1)\neq \emptyset$.
    Let $A_0 \in \mathcal{S}'_1(y_1)\cup \mathcal{S}'_2(y_1)$ such that
    \[
    d_{\Upsilon(H,X,Y,\mathcal{S}',C_r)}(A_0)=\max\{d_{\Upsilon(H,X,Y,\mathcal{S}',C_r)}(A):~A\in \mathcal{S}'_1(y_1)\cup \mathcal{S}'_2(y_1)\}.
    \]

    \textbf{Case 1: $A_0\in \mathcal{S}'_2(y_1)$}.
    Since there is no  $C\in \Upsilon(H,X,Y,\mathcal{S}',C_r)$ contained two atoms in $\mathcal{S}'_1(y_1)$ and $\mathcal{S}'_2(y_1)$ respectively, we can construct a new graph $H'$ from $H$ by repeating $A_0$ $|\mathcal{S}'_1(y_1)|$ times and then deleting $\bigcup_{S \in \mathcal{S}_1'(y_1)}S$. Assume the repeated copies of $A_0$ in $H'$ are denoted by $A_0^{(1)}, A_0^{(2)}, \ldots, A_0^{(|\mathcal{S}'_1(y_1)|)}$.
    Here repeating $A_0$ by $A_0^{(i)}$~($1\le i\le  |\mathcal{S}'_1(y_1)|$) means there exists a bijection $\phi: A_0 \to A_0^{(i)}$ such that $\phi(x)\phi(y) \in E(H')$ if and only if $xy\in E(H)$ for any $x,y \in A_0$, and $\phi(x)u \in E(H')$ if and only if $xu \in E(H)$ for any $x \in A_0, u \in (X\cup U) \setminus A_0$. Moreover, there is no edges between $A_0^{(i)}$ and $A_0^{(j)}$ for any $0 \le i < j \le |\mathcal{S}'_1(y_1)|$ where $A_0^{(0)} := A_0$.
    Write $Z_1 = \bigcup_{S \in \mathcal{S}_1'(y_1)}S$ and $Z_2 = \bigcup_{i=1}^{|\mathcal{S}'_1(y_1)|} A_0^{(i)}$.
    Then $H'$ has vertex set $ (X \setminus Z_1) \cup Z_2 \cup Y$ and \[
    \mathcal{S}'' := \mathcal{S}'_0(y_1) \cup \mathcal{S}'_2(y_1) \cup \{A_0^{(i)}\}_{1 \le i \le |\mathcal{S}'_1(y_1)|}
    \]
    is a partition of $(X \setminus Z_1) \cup Z_2$. Note that $|\mathcal{S}''| = |\mathcal{S}'|$.
    Now we consider the set $\Upsilon(H', (X \setminus Z_1) \cup Z_2, Y, \mathcal{S}'', C_r)$.
    For each $C \in \Upsilon(H,X,Y,  \mathcal{S}', C_r)$, if $C$ contains no element from $\mathcal{S}_1'(y_1)$, then $C \in \Upsilon(H', (X \setminus Z_1) \cup Z_2, Y, \mathcal{S}'', C_r)$.
    If $C$ contains some elements from $\mathcal{S}_1'(y_1)$, then $C$ is not in $\Upsilon(H', (X \setminus Z_1) \cup Z_2, Y, \mathcal{S}'', C_r)$. There are at most $d_{\Upsilon(H,X,Y,\mathcal{S}',C_r)}(A_0)\cdot |\mathcal{S}_1'(y_1)|$ such $C$ contained elements from $\mathcal{S}_1'(y_1)$.
    For each $A_0^{(i)}$, there will arise at least $d_{\Upsilon(H,X,Y,\mathcal{S}',C_r)}(A_0)$ new elements in $\Upsilon(H', (X \setminus Z_1) \cup Z_2, Y, \mathcal{S}'', C_r)$ by replacing $A_0$ with $A_0^{(i)}$ in every $C \in \Upsilon(H,X,Y,  \mathcal{S}', C_r)$ contained $A_0$.
    As a result, we have that $|\Upsilon(H', (X \setminus Z_1) \cup Z_2, Y, \mathcal{S}'', C_r)| \ge |\Upsilon(H,X,Y,  \mathcal{S}', C_r)|$. By the maximality when we choose $(H,X,Y,\mathcal{S}')$, the equality holds. But $H'$ admits a lower value of
    $
    \min \{|\mathcal{S}_1''(y_1)|, |\mathcal{S}_2''(y_1)|\} = 0,
    $ a contradiction.

    \textbf{Case 2: $A_0\in \mathcal{S}'_1(y_1)$}. We can do a similar repeating of $A_0$ but removing $\bigcup_{S \in \mathcal{S}_2'(y_1)}S$. Then we can obtain the result. The details are omitted here.
\hfill $\blacksquare$ \par

If $|\mathcal{S}_1'(y_1)|=0$, then each $C \in \Upsilon(H,X,Y,  \mathcal{S}', C_r)$ contains exactly one element from $\mathcal{S}_2'(y_1)$ and $(s-1)$ elements from $\mathcal{S}_0'(y_1)$.
Now fix an element $S' \in \mathcal{S}_2'(y_1)$.
Write $Z = \bigcup_{S \in \mathcal{S}_2'(y_1)}S$.

Then each $C \in \Upsilon(H,X,Y,  \mathcal{S}', C_r)$ contained $S'$ is also in $\Upsilon(H[X\setminus  Z \cup Y \cup S'], X\setminus Z, Y\cup S',\mathcal{S}_0'(y_1),C_r)$.
Now $E(H[Y \cup S']) \neq \emptyset$.
By Lemma~\ref{lem: e HW has edge, Upsilon}, we have that $|\Upsilon(H[X\setminus  Z \cup Y \cup S'], X\setminus Z, Y\cup S',\mathcal{S}_0'(y_1),C_r)| \le (1+o(1))\left( \frac{|\mathcal{S}_0'(y_1)|}{s-1} \right)^{s-1}$.
Then we have that
\begin{equation*}
\begin{aligned}
    |\Upsilon(H,X,Y,  \mathcal{S}', C_r)| & \le |\mathcal{S}_2'(y_1)| \cdot (1+o(1))\left( \frac{|\mathcal{S}_0'(y_1)|}{s-1} \right)^{s-1} \\
    & \le (1+o(1))|\mathcal{S}_2'(y_1)| \left( \frac{|\mathcal{S}'| - |\mathcal{S}_2'(y_1)|}{s-1} \right)^{s-1} \\
    & \le (1+o(1))\left( \frac{|\mathcal{S}'|}{s} \right)^{s}\\
    & = (1+o(1))\left( \frac{|\mathcal{S}|}{s} \right)^{s},
\end{aligned}
\end{equation*}
which proves the upper bound. So by Claim~\ref{claim: either S_1 or S_2}, we only need to consider the case when $|\mathcal{S}_2'(y_1)|=0$.

Note that each $C \in \Upsilon(H,X,Y,  \mathcal{S}', C_r)$ can be decomposed into $V(C) = Y \cup \left( \bigcup_{i=1}^{s}S_i \right)$, where $S_i \in \mathcal{S}', i=1,2,\ldots,s$ and there exactly two $S_{j_1},S_{j_2} \in \mathcal{S}_1'(y_1)$ while the rest $S_j$ is in $\mathcal{S}_0'(y_1)$.

Now we define a total order $\sigma$ on $X$,
that is, a bijection $\sigma$ from $X$ to $\{1,2,\ldots,|X|\}$.
We abbreviate $\sigma(u) < \sigma(v)$ to $u <_{\sigma} v$.
If $(S_1,S_2,S_3,\ldots,S_s)$ satisfies the following properties, we say $(S_1,S_2,S_3,\ldots,S_s)$ is a \textbf{good decomposition} of $C$.
\begin{enumerate}
    \item $S_1,S_2 \in \mathcal{S}_1'(y_1)$ and $z_1<_{\sigma} z_2$, where $\{z_i\} = S_i\cap N_H(y_1)$, $i=1,2$.
    \item $S_3, \ldots, S_s \in \mathcal{S}_0'(y_1)$. For $i\in\{2,\ldots, s-1\}$, in $H[Y \cup S_1 \cup \ldots\cup S_i]$, there exists a vertex $x_i$ of degree one and a path from $y_1$ to $x_i$ via $z_1$. Such a vertex always exists since $C$ is a cycle. Then $x_i$ must have a neighbor in $S_{i+1}$. Since $C$ is an induced copy of $C_r$, it follows that $x_i$ has no neighbors in $S_{j}$ for any $j>i+1$.
\end{enumerate}

Note that each $C \in \Upsilon(H, X, Y,  \mathcal{S}', C_r)$ corresponds to exactly one good decomposition.
Let $\mathcal{D}$ be the collection of good decompositions of  $\Upsilon(H, X, Y,  \mathcal{S}', C_r)$ and let $Q = (Q_1, Q_2,\ldots, Q_s)$ be a random variable chosen uniformly from $\mathcal{D}$. Then the entropy of $Q$ is $H(Q) = \log |\mathcal{D}| = \log (|\Upsilon(H, X, Y,  \mathcal{S}', C_r)|)$ by Proposition~\ref{prop: entropy support bound}.
By Proposition~\ref{prop: chain rule}, we have that
\begin{equation*}
\begin{aligned}
    H(Q) & = H(Q_1, Q_2) + \sum_{i=3}^{s}H(Q_i \mid Q_{<i}),
\end{aligned}
\end{equation*}
where $Q_{<i} := (Q_1,Q_2,\ldots,Q_{i-1})$ for $i\in\{2,\ldots,s\}$. Then we will do similar work as in Section~\ref{sec: UB 1}.
Note that $Q_1,Q_2 \in \mathcal{S}'_1(y_1)$ and $z_1 <_{\sigma} z_2$ must be fulfilled, where $\{z_i\}= Q_i\cap N_H(y_1)$ for $i=1,2$.
Then the support of $(Q_1,Q_2)$ has size at most $\binom{|\mathcal{S}'_1(y_1)|}{2} \le |\mathcal{S}'_1(y_1)|^2/2$ since after choosing a pair from $\mathcal{S}'_1(y_1)$, the order is determined by checking whether $z_1 <_{\sigma} z_2$ holds.
Therefore, $H(Q_1) \le \log (|\mathcal{S}'_1(y_1)|^2/2)$.
For a sequence $(P_1,P_2,\ldots,P_{i-1})$ where $P_j \in \mathcal{S}', 1 \le j \le i-1$, let $\beta(P_1,P_2,\ldots,P_{i-1})$ denote the number of $(S_1,\ldots,S_s) \in \mathcal{D}$ such that $S_j=P_j,\forall j=1,2,\ldots,i-1$. Let $\alpha(P_1,P_2,\ldots,P_{i-1})$ denote the set of $P_i \in \mathcal{S}'$ such that there exists $(S_1,\ldots,S_s) \in \mathcal{D}$ such that $S_j=P_j,\forall j=1,2,\ldots,i$.
For $i=3,\ldots,s$, we have that
\begin{equation*}
\begin{aligned}
    H(Q_i \mid Q_{<i}) &= \sum_{\{P_j\}_{j=2,\ldots,i-1} \subseteq \mathcal{S}'} \mathbb{P}(Q_j=S_j,j=1,\ldots,i-1) H(Q_i \mid Q_j=S_j,j=1,\ldots,i-1) \\
    & \le \sum_{\{P_j\}_{j=2,\ldots,i-1} \subseteq \mathcal{S}'} \frac{\beta(P_1,\ldots,P_{i-1})}{|\mathcal{D}|} \log |\alpha(P_1,\ldots,P_{i-1})| \\
    & \le \sum_{\{P_j\}_{j=2,\ldots,i-1} \subseteq \mathcal{S}'} \sum_{(S_1,S_2,\ldots,S_s) \in \mathcal{D}} \frac{\mathrm{1}_{S_j=P_j,j=1,2,\ldots,i-1}}{|\mathcal{D}|} \log |\alpha(P_1,\ldots,P_{i-1})| \\
    & \le \sum_{(S_1,S_2,\ldots,S_s) \in \mathcal{D}} \sum_{\{P_j\}_{j=2,\ldots,i-1} \subseteq \mathcal{S}'}  \frac{\mathrm{1}_{S_j=P_j,j=1,2,\ldots,i-1}}{|\mathcal{D}|} \log |\alpha(P_1,\ldots,P_{i-1})| \\
    & \le \sum_{(S_1,S_2,\ldots,S_s) \in \mathcal{D}} \frac{1}{|\mathcal{D}|} \log |\alpha(S_1,\ldots,S_{i-1})|,
\end{aligned}
\end{equation*}
and

\begin{equation}\label{eq: entropy d>=2 1}
\begin{aligned}
    H(Q) & = \log|\mathcal{D}| = H(Q_1,Q_2) + \sum_{i=3}^{s}H(Q_i \mid Q_{<i})\\
    & \le \sum_{(S_1,S_2,\ldots,S_s) \in \mathcal{D}} \frac{1}{|\mathcal{D}|} \left(\log (|\mathcal{S}'_1(y_1)|^2/2) + \sum_{i=3}^{s}  \log |\alpha(S_1,\ldots,S_{i-1})| \right). \\
\end{aligned}
\end{equation}

Now we fix $(S_1,S_2,\ldots,S_s) \in \mathcal{D}$ and write $\alpha_i = \alpha(S_1,\ldots,S_{i-1})$ for $i\in\{3,\ldots,s\}$.
Note that, for $i\in \{3,\ldots,s\}$, $\alpha_i \subseteq \mathcal{S}_0'(y_1)$ and $\{\alpha_i\}_{i=3,\ldots,s}$ are disjoint sets by the second rule of good decomposition.
By AM-GM inequality, we have that

\begin{equation}\label{eq: entropy d>=2 2}
\begin{aligned}
    &2\log \frac{|\mathcal{S}_1'(y_1)|}{2} + \sum_{i=3}^{s}\log |\alpha_i| \\
    \le & \log \left(\frac{|\mathcal{S}_1'(y_1)| + \sum_{i=2}^{s}|\alpha_i| }{s} \right)^{s} \\
    \le & \log \left(\frac{|\mathcal{S}_1'(y_1)| + |\mathcal{S}_0'(y_1)| }{s} \right)^{s} \\
    = & \log \left(\frac{|\mathcal{S}'| }{s} \right)^{s}. \\
\end{aligned}
\end{equation}

Combining (\ref{eq: entropy d>=2 1}) and (\ref{eq: entropy d>=2 2}), we have that
\begin{equation*}
\begin{aligned}
    & H(Q) = \log|\mathcal{D}| = \log (|\Upsilon(H, X, Y,  \mathcal{S}', C_r)|) \\
    \le & \sum_{(S_1,S_2,\ldots,S_s) \in \mathcal{D}} \frac{1}{|\mathcal{D}|} \left(\log (|\mathcal{S}'_1(y_1)|^2/2) + \sum_{i=3}^{s}  \log |\alpha(S_1,\ldots,S_{i-1})| \right) \\
    =& \log2 + \log \left(\frac{|\mathcal{S}'| }{s} \right)^{s},
\end{aligned}
\end{equation*}
which derives that $|\Upsilon(H, X, Y,  \mathcal{S}', C_r)| \le 2\left(\frac{|\mathcal{S}'| }{s} \right)^{s}$.
Recalling that $|\mathcal{S}'| = |\mathcal{S}|$, then the lemma is proved when $\ell_1=1$.

\end{proof}

Combining (\ref{eq: Upsilon with scr H}) and (\ref{eq: only consider X1}), Lemma~\ref{lem: upper bound Upsilon}, we can prove the upper bound easily.

\section{Proof of Theorem~\ref{thm: the main theorem 2}~(5)}\label{sec: 5}

The upper bound of Theorem~\ref{thm: the main theorem 2}~(5) comes from Theorem~\ref{thm: the main theorem 2}~(2). We only need to construct a graph that reaches the upper bound.

In this section, $L=\{\ell_1,\ell_2\}\subseteq [1,r-1]$ and $\ell_1,\ell_2,r$ form an arithmetic progression with common difference $d\ge 2$. That is, $\ell_{2}-\ell_1 = r-\ell_2 = d$.
For convenience, we write $\ell := \ell_1$ and then $\ell_2 = \ell +d, r=\ell+2d$. Moreover, we require $\ell < d/2$ and $\ell \in \{3,4\}$.
We then construct the  graph $G_{\ell,d}$ in the following steps.

\begin{enumerate}
    \item Write $W = \{w_1,w_2,\ldots,w_\ell\}$.
    \item Let $k = \lfloor (n-\ell)/d \rfloor$. For each $w_i$ ($1\le i\le \ell$), add $k$ new vertices $v_1^{(i)}, v_2^{(i)}, \ldots, v_k^{(i)}$ and for each $j$ ($1\le j\le k$) and $i$ ($1\le i\le \ell$), connect $w_i$ and $v_j^{(i)}$ by a path $\mathcal{P}_{ij}$. Denote its length by $|\mathcal{P}_{ij}|$,  the number of edges of $\mathcal{P}_{ij}$. Let $|\mathcal{P}_{ij}| = p_i$ for each $j$. We require that $\sum_{i=1}^{l}p_i = d$ and $p_i \ge 2$ for each $i$.
    \item For $t$ and $t+1$~(in modulo $\ell$) ($1\le t\le \ell$), if $j > i$, then connect $v_{j}^{(t)}$ and $v_{i}^{(t+1)}$.
\end{enumerate}

\begin{figure}[t]
	\centering
	\includegraphics[width=0.7\linewidth]{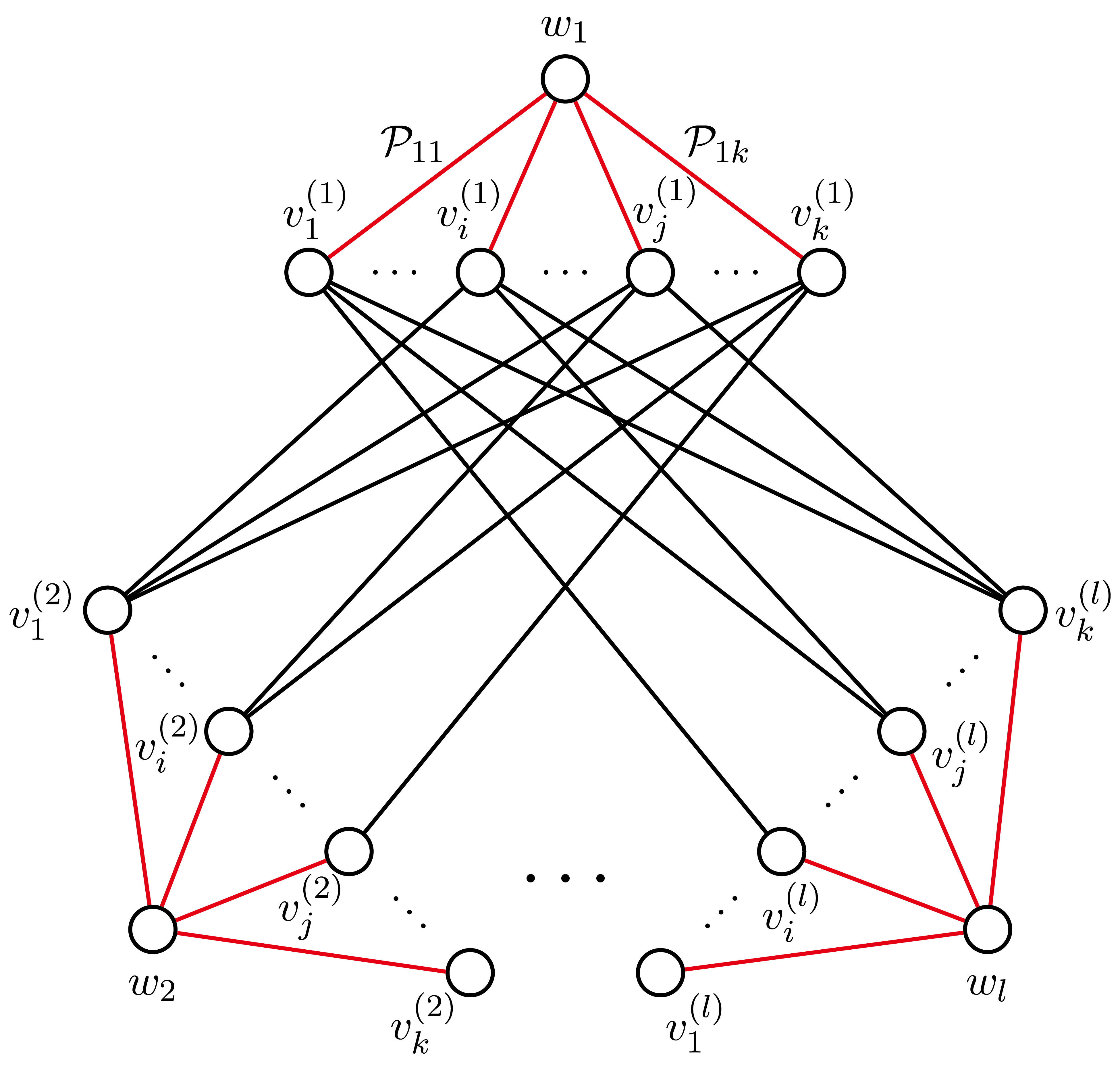}
	\caption{The red lines represent paths.  }\label{fig: lb, d ge 2l, l ge 3}
\end{figure}

Figure~\ref{fig: lb, d ge 2l, l ge 3} shows an example.
Note that $w_1 \mathcal{P}_{1i} \mathcal{P}_{2j}w_2 \mathcal{P}_{2i} \mathcal{P}_{3j} w_3 \ldots \mathcal{P}_{\ell i}w_\ell  \mathcal{P}_{\ell j} \mathcal{P}_{1i}w_1$ is an induced $C_{\ell+2d}$ for each $1 \le i < j \le k$ and we denote it by $C_r^{(ij)}$.
It is easy to see that $C_r^{(ij)}$ and $C_r^{(i'j')}$ have either $\ell$ or $\ell+d$ vertices in common for each $(i,j)\neq (i',j')$.
Then $G_{\ell,d}$ has $\ell + kd \le n$ vertices and at least $\binom{k}{2} = (1+o(1))\frac{n^2}{2d^2}$ induced copies of $C_{r}$.
If we can prove that there are no other induced copies of $C_r$ in $G_{\ell,d}$, then $\mathscr{H}_{G, C_r}^{r}$ is $\{\ell, \ell+d\}$-intersecting and thus, $\Psi_r(n, C_r, \{\ell,\ell+d\}) \ge (1+o(1))\frac{n^2}{2d^2}$.

When $\ell \in \{3,4\}$, it is easy to verify that there are no other induced copies of $C_r$, and we omit the details here.
But when $\ell$ is large enough, it would certainly fail since there would arise other induced copies of $C_r$.
We believe when $\ell$ is large, the factor before $\left( \frac{n-\ell_1}{r-\ell_1} \right)^s$ in $\Psi_r(n, C_r, L)$ will be strictly less than two.
In previous cases, the behaviour of $\Psi_r(n,C_r,L)$ only depends on $s$ and $d$.
In this case, $\ell_1$ plays an important role in deciding the asymptotic behavior of $\Psi_r(n, C_r, L)$, and it may reveal some inherent difficulties in the case when $s=2$.

\section{Proof of Theorem~\ref{thm: the main theorem 2} (6)} \label{sec: 6}

In this section, $L=\{\ell_1,\ell_2\}\subseteq [1,r-1]$ and $\ell_1,\ell_2,r$ form an arithmetic progression with common difference $d\ge 2$. That is, $\ell_{2}-\ell_1 = r-\ell_2 = d$. Moreover, we require $\ell_1 = 2d$.
For convenience, we write $\ell := \ell_1$ and then $\ell_2 = \ell +d, r=\ell+2d=4d$ and rewrite the result here:

$$(6)~\left(\frac{4}{3} +o(1)\right) \left(\frac{n-\ell_1}{r-\ell_1}\right)^s \le \Psi_r(n, C_r, L) \le \left(2 - \frac{2}{f(\ell_1)} +o(1)\right) \left(\frac{n-\ell_1}{r-\ell_1}\right)^s,$$

or equivalently,
$$(6)~\left(\frac{2}{3} +o(1)\right) \frac{n^2}{2d^2} \le \Psi_r(n, C_r, L) \le \left(1 - \frac{1}{f(\ell)} +o(1)\right) \frac{n^2}{2d^2}.$$

Before proving the theorem, we first prove  a lower bound of $f(\ell)$.
\begin{lemma}\label{lem: lb f(n)}
    $f(\ell) \ge 3$ for even $\ell \ge 4$.
\end{lemma}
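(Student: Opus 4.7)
The plan is to prove $f(\ell)\ge 3$ by induction on even $\ell\ge 4$: the base case is $\ell=4$ with $K_{4}$, and the inductive step is a ``vertex-to-triangle'' expansion that enlarges any cubic graph carrying a flawless $1$-factorization by two vertices while preserving the property. For the base case, the three perfect matchings $\{12,34\}$, $\{13,24\}$, $\{14,23\}$ of $K_{4}$, viewed as colors red, green, blue, form a $1$-factorization whose three pairwise unions are precisely the three Hamiltonian $4$-cycles of $K_{4}$; since $K_{4}$ has exactly three Hamiltonian cycles, each uses exactly two colors and this $1$-factorization is flawless.

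For the inductive step, let $G$ be a cubic graph on $\ell$ vertices with flawless $1$-factorization $\{M_{R},M_{G},M_{B}\}$, pick any vertex $v$ with incident edges $va\in M_{R}$, $vb\in M_{G}$, $vc\in M_{B}$, and define $G'$ on $\ell+2$ vertices by replacing $v$ with a triangle on three new vertices $v_{R},v_{G},v_{B}$ where $v_{R}$ inherits the edge to $a$ in color $R$, $v_{G}$ the edge to $b$ in color $G$, and $v_{B}$ the edge to $c$ in color $B$. Assign the triangle edges the unique colors that keep the $3$-edge-coloring proper: $v_{G}v_{B}$ red, $v_{R}v_{B}$ green, $v_{R}v_{G}$ blue. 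This gives a $1$-factorization $\{M_{R}',M_{G}',M_{B}'\}$ of $G'$, and the key observation (used throughout) is that the two triangle edges incident to $v_{X}$ receive exactly the two colors other than $X$.

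The core of the argument is a color-preserving bijection between the Hamiltonian cycles of $G$ and of $G'$. In the forward direction, a Hamiltonian cycle of $G$ equal to $M_{X}\cup M_{Y}$, with complement color $Z$, traverses a $2$-path at $v$ avoiding the color-$Z$ edge; replace it by the $4$-path through all three new vertices that avoids the out-edge at $v_{Z}$. By the key observation, the two triangle edges traversed carry exactly colors $X$ and $Y$, so the new cycle uses only these two colors and equals $M_{X}'\cup M_{Y}'$. Conversely, in any Hamiltonian cycle of $G'$, each of $v_{R},v_{G},v_{B}$ has degree two, and a case analysis on the number $k\in\{0,1,2,3\}$ of out-edges $v_{R}a, v_{G}b, v_{B}c$ used forces $k=2$: $k=0$ disconnects the triangle from the rest, $k=3$ would demand $3/2$ triangle edges, and $k=1$ forces all three triangle edges to be used, which overloads the vertex whose out-edge is used. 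When $k=2$, the unused out-edge pins down the path through the triangle; contracting this path back to $v$ recovers a Hamiltonian cycle of $G$ using the same pair of colors.

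The main obstacle I anticipate is precisely ruling out unexpected Hamiltonian cycles of $G'$, i.e. the converse direction above; this is handled cleanly by the case analysis on $k$, relying only on parity and on the fact that each new vertex has exactly two triangle edges. Once the bijection is in place, the three Hamiltonian cycles of $G$ correspond bijectively to three Hamiltonian cycles of $G'$ with matching color pairs, so $\{M_{R}',M_{G}',M_{B}'\}$ is flawless. Iterating the expansion starting from $K_{4}$ yields, for every even $\ell\ge 4$, a cubic graph on $\ell$ vertices carrying a flawless $1$-factorization, which proves $f(\ell)\ge 3$.
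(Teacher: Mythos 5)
Your proof is correct, but it takes a genuinely different route from the paper. The paper proves the lemma by a single explicit construction: it takes the Hamiltonian cycle $u_1u_2\ldots u_\ell u_1$, splits it into the two matchings $M_1=\{u_1u_2,u_3u_4,\ldots\}$ and $M_2=\{u_2u_3,\ldots,u_\ell u_1\}$, adds an explicitly specified third matching $M_3$ of chords, and then asserts (omitting details) that $\{M_1,M_2,M_3\}$ is a flawless $1$-factorization. You instead argue by induction: the base case $K_4$ (where the three perfect matchings and the three Hamiltonian $4$-cycles match up exactly), and an inductive vertex-to-triangle expansion that adds two vertices to a cubic graph while preserving a flawless $1$-factorization. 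Your verification is complete and sound: the coloring of the expanded graph is a proper $1$-factorization; the forward direction shows each pair of colors still induces a Hamiltonian cycle; and the converse — the delicate point, that the expanded graph acquires no Hamiltonian cycle using three colors — is correctly settled by your case analysis on the number $k$ of out-edges used at the triangle (the degree count $2t+k=6$ kills $k\in\{0,1,3\}$, and $k=2$ forces the path through the triangle, whose contraction plus the flawlessness of the smaller graph pins the cycle to a union of two $1$-factors). What each approach buys: the paper's construction is shorter and exhibits a concrete graph for every even $\ell$, but it leaves the (nontrivial) check that every Hamiltonian cycle is two-colored to the reader; your induction is longer but self-contained, fully verifies the flawless condition, and isolates a reusable fact — flawlessness of a cubic graph is preserved under vertex-to-triangle expansion — which gives the infinite family from the single seed $K_4$.
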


\begin{proof}
    We prove the lower bound by constructing a graph with $\ell$ vertices that admits a flawless $1$-factorization for each even $\ell \ge 4$.
    Let $U = \{u_1,u_2,\ldots,u_\ell\}$, $M_1 = \{u_1u_2,u_3u_4,\ldots,u_{\ell-1}u_{\ell}\}$ and $M_2 = \{u_2u_3,u_4u_5,\ldots,u_{\ell-2}u_{\ell-1},u_{\ell}u_1\}$.

    If $\ell=4$, let $M_3 = \{u_1u_3,u_2u_4\}$; if $\ell \ge 6$, then let
    \[M_3 = \{u_1u_3, u_4u_6\} \cup \{u_iu_{i+3}\}_{i=5,7,9,\ldots,\ell-3} \cup \{u_{\ell-1}u_2\}.\]
    Let $F$ be the graph with vertex set $U$ and edge set $M_1\cup M_2\cup M_3$.
    Figure~\ref{fig: f(n) ge 3} shows the example when $\ell=4$ and $\ell=12$.
    It is easy to verify $\{M_1,M_2,M_3\}$ is a flawless $1$-factorization of $F$ by its definition.
    We omit the details here.
\end{proof}

\begin{figure}[t]
	\centering
	\includegraphics[width=0.7\linewidth]{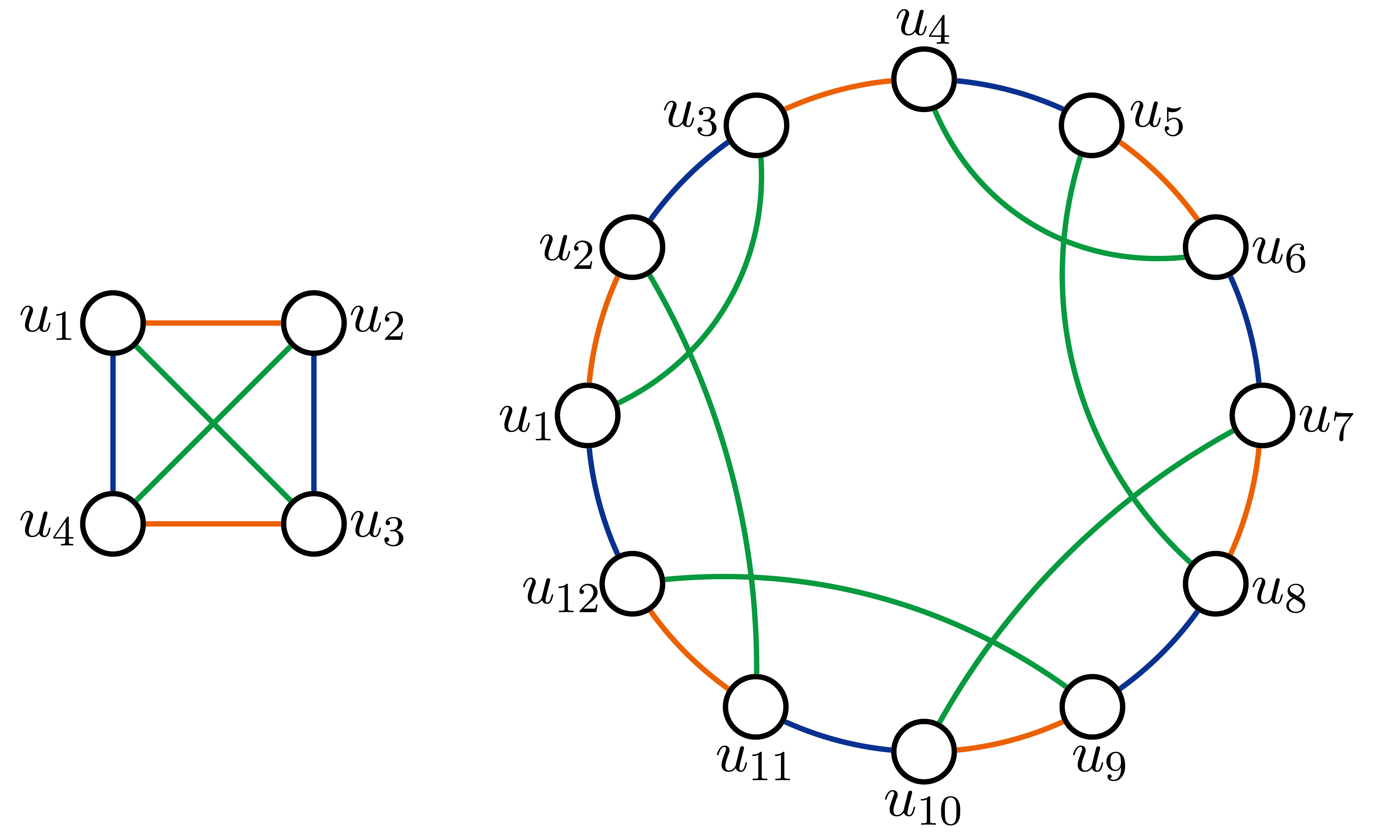}
	\caption{$M_1$ is the set of orange edges, $M_2$ is the set of blue edges and $M_3$ is the set of green edges. $F$ is the graph with edge set $M_1\cup M_2 \cup M_3$ and $\{M_1,M_2,M_3\}$ is a flawless $1$-factorization of $F$.}\label{fig: f(n) ge 3}
\end{figure}

\subsection{The lower bound}\label{sec: lb 6}

Note that $r=4d \ge 8$.
By Lemma~\ref{lem: lb f(n)}, let $\{M_1,M_2,M_3\}$ be a flawless $1$-factorization of a graph $F$ with vertices $\{u_1,\ldots,u_{\ell}\}$.
We construct a graph as follows.
\begin{enumerate}
    \item Let $F'$ be the sub-division of $F$, that is, for each edge $u_iu_j \in E(F)$, add a new vertex $v_{u_iu_j}$, remove the edge $u_iu_j$ and then add two edges $u_iv_{u_iu_j}, u_jv_{u_iu_j}$.
    \item Replace each $v_{u_iu_j}$ by a $k$-clique with vertices $\{v_{u_iu_j}^{t}\}_{1 \le t \le k}$ where $k = \lfloor \frac{n-\ell}{3d} \rfloor$, and if $v' \in N(v_{u_iu_j})$, then connect $v'v_{u_iu_j}^{t}$ for all $t=1,2,\ldots,k$.
    \item Note that each $\{v_{u_iu_j}^{t}\}_{1 \le t \le k}$ corresponds to an edge $u_iu_j$ in $F$, and each edge in $F$ corresponds to $k$ new vertices. If  the edges $u_{i_1}u_{j_1}$ and $u_{i_2}u_{j_2}$  belong to the same matching $M' \in \{M_1,M_2,M_3\}$ in $F$, then we connect $u_{i_1j_1}^{t_1}$ and $u_{i_2j_2}^{t_2}$ for all $1\le t_1,t_2\le k$ but $t_1 \neq t_2$.
\end{enumerate}

Denote the result graph by $F''$.
Define
\begin{equation*}
\begin{aligned}
    &E_1 = \{v_e^{t_1}v_e^{t_2} \mid e\in E(F), t_1\neq t_2\}, \\
    &E_2 = \{v_{e_1}^{t}v_{e_2}^{t} \mid \exists i, \text{s.t.}~e_1,e_2 \in M_i \}. \\
\end{aligned}
\end{equation*}
Then
$E_1$ denotes the edges in the $k$-clique and $E_2$ denotes the edges added in step 3.
See Figure~\ref{fig: lb flawless} for an example when $\ell=4$.

\begin{figure}[t]
	\centering
	\includegraphics[width=\linewidth]{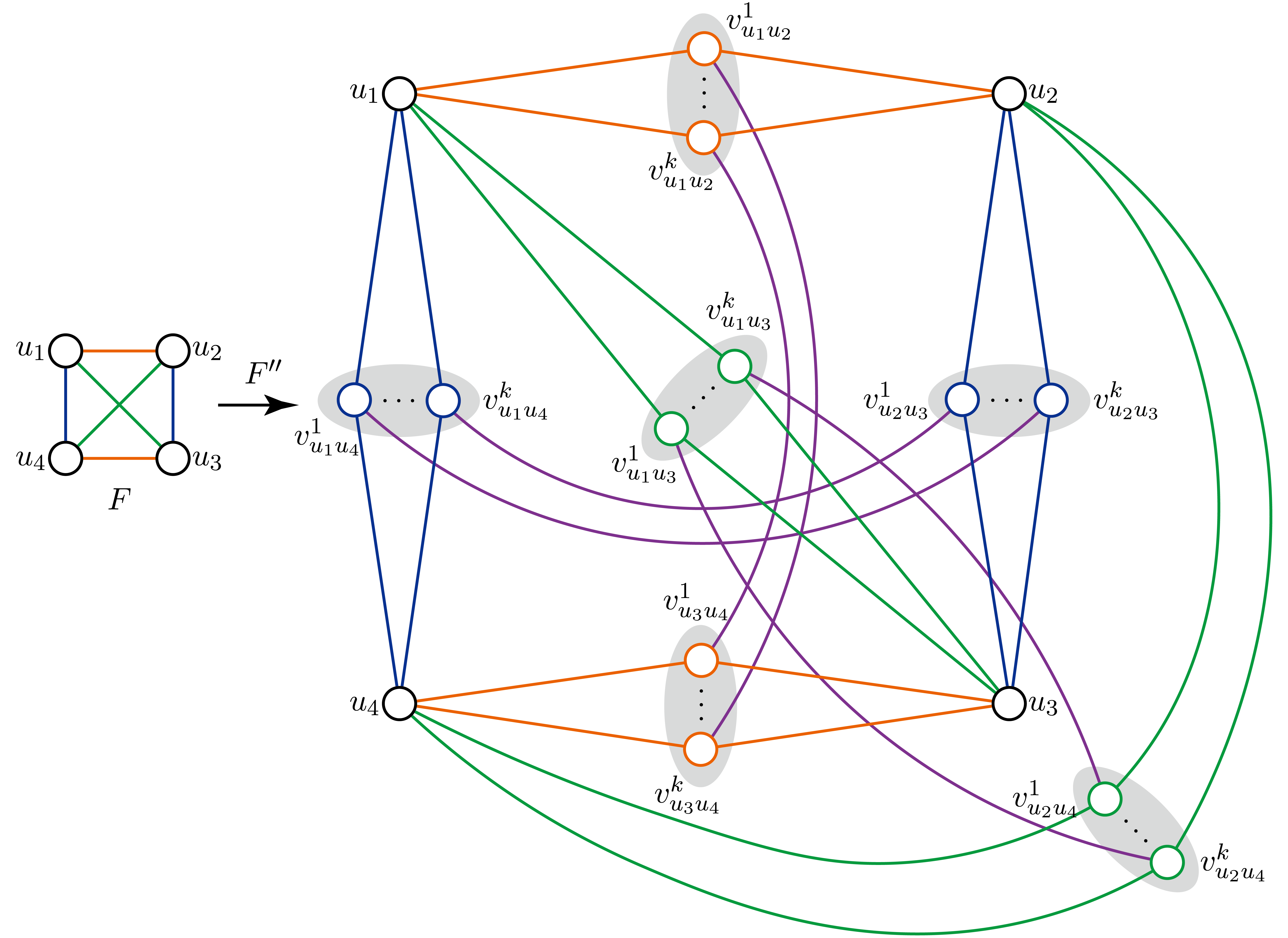}
	\caption{A sample of $F$ and $F''$ when $\ell=4$. The orange, blue, and green edges in $F$ correspond to $M_1, M_2,$ and $M_3$. The color of each part in $F''$ indicates where it comes from. Each grey part is a clique added in step 2. The purple edges are added in Step 3, i.e., in $E_2$.}\label{fig: lb flawless}
\end{figure}
Given $t_1,t_2\in\{1,\ldots,k\}$,
 then $\{v_{e_1}^{t_1}\}_{e_1\in M_i} \cup \{v_{e_2}^{t_2}\}_{e_2\in M_j} \cup V(F)$ forms an induced cycle of length $r$ for any $1\le i,j\le 3$ and $i\neq j$. There are $\binom{3}{2}\binom{k}{2} = (\frac{2}{3} +o(1)) \frac{n^2}{2d^2}$ possible choices and it is not hard to verify that the intersection of each two such cycles has size $\ell$ or $\ell+d$.
We say an induced copy of $C_r$ is \textbf{good} if it can be characterized by some $M_i, M_j$ and $t_1,t_2$ from the above way.
To prove $F''$ is $(C_r,L)$-intersecting, it suffices to prove that there are no other induced copies of $C_r$.

If we remove all edges from $E_1 \cup E_2$, then it results in a bipartite graph with one part $V(F)$ and the other part $V(F'')\setminus V(F)$. Note that all $C_r$ must contain all vertices in $V(F)$ since $|V(F)|=\ell$ and $r=2\ell$.
Then by the third rule of flawless $1$-factorization, it is  easy to verify that the cycle is good.

If there exists an induced copy of $C_r$~(denoted by $C$) contained some edge in $E_1$, say $v_{u_iu_j}^{t_1}v_{u_iu_j}^{t_2}$, then $u_iv_{u_iu_j}^{t_1}, u_iv_{u_iu_j}^{t_2},u_jv_{u_iu_j}^{t_1}, u_jv_{u_iu_j}^{t_2}\in E(F'')$. Since $C$ is an induced copy of $C_r$ and $r\ge 8$,  $u_i,u_j\notin V(C)$. Then the other edge connecting $v_{u_iu_j}^{t_1}$ in $C$ must be in $E_2$, say $v_{u_iu_j}^{t_1}v_{u_{i'}u_{j'}}^{t_3}$. We claim $t_3=t_2$; otherwise $\{v_{u_iu_j}^{t_1}, v_{u_iu_j}^{t_2},v_{u_{i'}u_{j'}}^{t_3}\}$ forms an triangle, a contradiction with $C$ being an induced copy of $C_r$. Similarly, the other neighbor of $v_{u_iu_j}^{t_2}$ in $C$ must be of the form $v_{u_{i''}v_{j''}}^{t_1}$. Then $v_{u_{i''}u_{j''}}^{t_1}v_{u_{i'}u_{j'}}^{t_2} \in E(F'')$ and $\{v_{u_iu_j}^{t_1}, v_{u_iu_j}^{t_2},v_{u_{i''}u_{j''}}^{t_1},v_{u_{i'}u_{j'}}^{t_2}\}$ forms a $C_4$, a contradiction  with $C$ being an induced copy of $C_r$.

Let $\gamma_i := \frac{1}{2}|V(C) \cap N_{F''}(u_i)| + \mathrm{1}_{u_i \in V(C)}$, $1\le i\le \ell$. Then $\gamma_i \le 2$ for all $1\le i\le \ell$. Note that $|V(C)| = r = 2\ell = \sum_{i=1}^{\ell}\gamma_i$ which implies $\gamma_i = 2$ for all $1\le i\le \ell$.
 If we have an edge $v_{u_iu_j}^{t_1}v_{u_{i'}u_{j'}}^{t_1} \in E_2 \cap E(C)$, then either $u_i \notin V(C)$ or $u_j\notin V(C)$ which implies either $\gamma_i \le 3/2$ or $\gamma_j \le 3/2$, a contradiction.
Therefore, all induced copies of $C_r$ in $F''$ are good and then we have
\[
\Psi_r(n,C_r,L) \ge \left(\frac{2}{3}+o(1)\right)\frac{n^2}{2d^2}.
\]

\subsection{The upper bound}\label{sec: ub 6}

Recall $L=\{\ell_1,\ell_2\}\subseteq [1,r-1]$, $\ell_1,\ell_2,r$ form an arithmetic progression with common difference $d\ge 2$, $\ell := \ell_1=2d$ and then $\ell_2 = \ell +d, r=\ell+2d=4d\ge 8$.
Let $G$ be the extremal $(C_r, L)$-intersecting graph with $n$ vertices maximizing $N_{ind}(G,C_r)$.
That is, $\Psi_r(n,C_r,L) = N_{ind}(G,C_r)$.
By the low bound given in Theorems~\ref{thm: the main theorem 2}~(2) and  \ref{thm: deza-E-F}, we can assume there is $W\subseteq \bigcap_{A \in \mathscr{H}^r_{G,C_r}}A  $ such that $|W|=\ell$.
By Lemma~\ref{lem: W not independent set psi},  we may assume $W = \{w_1,w_2,\ldots,w_{\ell}\}$ is an independent set.
Let $\mathcal{S}, X_1$ be the corresponding sets defined in Section~\ref{sec: atom}.

We  construct an auxiliary graph  $G'$, whose vertex set is $V(G')=\mathcal{S}$ and edge set is
$$E(G')=\bigg\{\{S_1,S_2\}\in \binom{\mathcal{S}}{2}:~S_1\cup S_2\subseteq A \text{~for some $A\in H^{r-\ell}_{G,C_r,W}$}\bigg\}.$$
Since $s=2$, by the definition of atoms, we have that $N_{ind}(G[X_1\cup W],C_r) = |E(G')|$.

\begin{lemma}\label{lem: s=2 l=2d}
    We have that
    \[N_{ind}(G[X_1\cup W],C_r) = |E(G')| \le  \left(1 - \frac{1}{f(\ell)}+o(1)\right)\frac{|\mathcal{S}|^2}{2}.\]
\end{lemma}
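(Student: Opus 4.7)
The plan is to attach to each atom $S\in\mathcal{S}$ a subgraph $M(S)$ on $W$ so that edges of $G'$ correspond to Hamiltonian cycles on $W$; then, using the atom axiom, any clique in $G'$ will force a flawless $1$-factorization of some $t$-regular graph on $\ell$ vertices, giving $\omega(G')\le f(\ell)$, from which Tur\'an's theorem yields the required bound.

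First, for every $v\in X_1$ I claim $|N_W(v)|=2$: pick any $A\in\mathscr{H}^{r-\ell}_{G,C_r,W}$ containing $v$; the induced $C_r$ on $W\cup A$ has length $4d$ and contains the independent set $W$ of size $2d$, which is necessarily a maximum independent set, so the cycle alternates between $W$ and $A$; this gives $v$ exactly two $W$-neighbours along the cycle, and inducedness kills any extras. Put $e_v:=N_W(v)\in\binom{W}{2}$ and $M(S):=\{e_v:v\in S\}$. If two distinct $v,v'$—whether in the same atom or in a pair of atoms joined by an edge of $G'$—satisfied $e_v=e_{v'}=\{w_a,w_b\}$, the four edges $vw_a,v'w_a,vw_b,v'w_b$ would sit inside the induced $C_r$ containing them, producing a $C_4$-subgraph and contradicting girth $r=4d\ge 8$. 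Hence $|M(S)|=d$, and for $\{S_1,S_2\}\in E(G')$ the sets $M(S_1),M(S_2)$ are disjoint; moreover, by the alternation structure, $M(S_1)\cup M(S_2)$ is exactly the Hamiltonian cycle on $W$ traced by the induced $C_r$ on $W\cup S_1\cup S_2$.

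Now suppose $S_1,\dots,S_t$ form a clique in $G'$ with $t\ge 3$. For each $w\in W$ and each $i\ne j$, $\deg_{M(S_i)}(w)+\deg_{M(S_j)}(w)=2$, which forces $\deg_{M(S_i)}(w)=1$ for every $i$; so every $M(S_i)$ is a perfect matching, and $\{M(S_1),\dots,M(S_t)\}$ is a perfect $1$-factorization of the $t$-regular graph $F^\star:=\bigcup_i M(S_i)$ on $\ell$ vertices. To upgrade perfect to flawless, let $C$ be any Hamiltonian cycle in $F^\star$: each $e\in C$ lies in a unique $M(S_i)$ and so corresponds to a unique vertex $v_e\in S_i$ with $e_{v_e}=e$; set $A:=\{v_e:e\in C\}$, which has size $2d$. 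Because each $\{S_i,S_j\}\in E(G')$ comes from an induced $C_r$ on $W\cup S_i\cup S_j$, the set $S_i\cup S_j$ is independent in $G$, so $A\subseteq\bigcup_i S_i$ is independent; combined with $N_W(v_e)=e$, this makes $W\cup A$ an induced $C_r$, i.e.\ $A\in\mathscr{H}^{r-\ell}_{G,C_r,W}$. The atom axiom then demands $S_i\subseteq A$ or $S_i\cap A=\emptyset$ for every $i$; but $|A|=2d$ while $|S_i|=d$, so at most two of the $S_i$ can lie fully inside $A$, meaning $C$ uses edges from exactly two of the matchings. Hence $\{M(S_i)\}$ is flawless, and $t\le f(\ell)$; since $f(\ell)\ge 3$ by Lemma~\ref{lem: lb f(n)}, the cases $t\le 2$ also satisfy $t\le f(\ell)$, so $\omega(G')\le f(\ell)$.

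Finally, since $G'$ is $K_{f(\ell)+1}$-free, Tur\'an's theorem gives $|E(G')|\le\bigl(1-1/f(\ell)\bigr)|\mathcal{S}|^2/2$, with the $o(1)$ in the lemma absorbing any integrality slack. The main obstacle is the reconstruction step in the previous paragraph: converting a hypothetical Hamiltonian cycle in $F^\star$ using three or more colours into a genuine element of $\mathscr{H}^{r-\ell}_{G,C_r,W}$. Making this work hinges on the independence of $\bigcup_i S_i$ in $G$, which is available only because the $S_i$ form a clique in $G'$; without this full use of the clique hypothesis one would only recover a perfect (not flawless) $1$-factorization, losing the sharp constant $f(\ell)$.
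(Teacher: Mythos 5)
Your proposal is correct, and it reaches the bound by a route that is genuinely leaner than the paper's in two places. First, the paper does not argue directly on all of $G'$: it first partitions $\mathcal{S}$ repeatedly according to $\alpha(w_j,S)=|\{x\in S: xw_j\in E(G)\}|$, discards the non-"all ones" atoms via bipartite-type estimates (this is where the $o(1)$ and the squared error terms come from), and only then shows that $G'$ restricted to the surviving family $\mathcal{S}'$ is $K_{f(\ell)+1}$-free. You avoid this reduction by the observation that inside any clique of size $t\ge 3$ the identity $\deg_{M(S_i)}(w)+\deg_{M(S_j)}(w)=2$ already forces every $M(S_i)$ to be a perfect matching, so Tur\'an can be applied to all of $G'$ at once (cliques of size $\le 2$ being harmless since $f(\ell)\ge 3$). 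Second, and more substantively, the paper proves the flawless property by contradiction using the $L$-intersecting hypothesis: after a minimum-degree cleaning step and the assumption $|\mathcal{S}'|\ge 3f(\ell)$, a Hamiltonian cycle using a partial colour class is played against an auxiliary induced $C_r$ built from an atom $S_{j''}$ outside the clique, whose intersection with the reconstructed cycle has size $\ell+|E(C)\cap M_{j'}|\notin L$. You instead observe that the reconstructed cycle $G[W\cup A]$ is itself a genuine member of $\mathscr{H}^{r-\ell}_{G,C_r,W}$ (using that $\bigcup_i S_i$ is independent and $N_W(v_e)=e$), so the inclusion-maximality built into the definition of atoms forces $A$ to be a union of exactly two atoms, hence the cycle uses exactly two colours; this removes the need for the degree cleaning, the size assumption, and the auxiliary atom, and even yields the clean bound $|E(G')|\le \bigl(1-\tfrac{1}{f(\ell)}\bigr)\tfrac{|\mathcal{S}|^2}{2}$ without error terms. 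The only cosmetic blemish is your blanket claim that every $v\in X_1$ has $|N_W(v)|=2$: this needs $v$ to lie in at least one hyperedge, which may fail for an atom disjoint from all hyperedges; but such atoms are isolated in $G'$ and never enter your argument, so the claim should simply be restricted to atoms incident to an edge of $G'$.
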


\begin{proof}
Let $w\in W$ and $S \in \mathcal{S}$ an atom with size $d$.
Define $\alpha(w,S) := |\{x \in S: xw\in E(G)\}|$. By the definition of the atom, $\alpha(w, S) \in \{0,1,2\}$.
Let $ W=\{ w_1,\ldots,w_{\ell}\}$. We partition $\mathcal{S}$ into three parts
\[
    \mathcal{S}_i = \{S \in \mathcal{S} : \alpha(w_1, S) = i\}, i=0,1,2.
\]
Note that the vertex set of $G'$ is $\mathcal{S}$ and we can view $\mathcal{S}_{i}~(i=0,1,2)$ as a vertex partition of $V(G')$.
We first claim that $\mathcal{S}_0$ is an independent set in $G'$, which directly follows from its definition.
Similarly, we have that $\mathcal{S}_2$ is an independent set, and there is no edge between $\mathcal{S}_1$ and $\mathcal{S}_0 \cup \mathcal{S}_2$.
Then $G'[\mathcal{S}_0 \cup \mathcal{S}_2]$ is a bipartite graph.
Thus  $|E(G')| \le |E(G'[\mathcal{S}_1])| + \frac{(|\mathcal{S}| - |\mathcal{S}_1|)^2 }{4}$.

Similarly, if we continue to partition $\mathcal{S}_1$ into three parts with the value of $\alpha(w_2, S)$, that is
\[
    \mathcal{S}_{1,i} = \{S \in \mathcal{S}_1 : \alpha(w_2, S) = i\}, i=0,1,2.
\]
By a similar argument, we can derive that $|E(G'[\mathcal{S}_1])| \le |E(G'[\mathcal{S}_{1,1}])| + \frac{(|\mathcal{S}_1| - |\mathcal{S}_{1,1}|)^2 }{4}$. Then we have that
\begin{equation*}
\begin{aligned}
    |E(G')| & \le |E(G'[\mathcal{S}_1])| + \frac{(|\mathcal{S}| - |\mathcal{S}_1|)^2 }{4} \\
    & \le |E(G'[\mathcal{S}_{1,1}])| + \frac{(|\mathcal{S}_1| - |\mathcal{S}_{1,1}|)^2 }{4} + \frac{(|\mathcal{S}| - |\mathcal{S}_1|)^2 }{4} \\
    & \le |E(G'[\mathcal{S}_{1,1}])| + \frac{(|\mathcal{S}| - |\mathcal{S}_{1,1}|)^2 }{4}.
\end{aligned}
\end{equation*}

Keep doing the partition with $\alpha(w_j, S)$ for $j=3,\ldots,\ell$.
 Let $\mathcal{S}' = \{S\in \mathcal{S} : \alpha(w_j,S)=1,\forall 1 \le j \le \ell \}$. Then we can derive that $|E(G')| \le |E(G'[\mathcal{S}'])| + \frac{(|\mathcal{S}| - |\mathcal{S}'|)^2 }{4}$.
To prove Lemma~\ref{lem: s=2 l=2d}, noting that $f(\ell) \ge 3$ for any $\ell$, it suffices to prove that
\[
|E(G'[\mathcal{S}'])| \le \left(1 - \frac{1}{f(\ell)} + o(1)\right)\frac{|\mathcal{S}'|^2}{2}.
\]


In the following, we assume for each $S \in \mathcal{S}'$, $d_{G'[\mathcal{S}']}(S) \ge (1-\frac{1}{f(\ell)})|\mathcal{S}'|$.
Otherwise, we can delete $S$ and then prove the result for $\mathcal{S'}\setminus\{S\}$.
Also, we can assume $|\mathcal{S}'| \ge 3f(\ell)$ since we have an $o(1)$ term in our result and $f(\ell)$ is a constant.

For an atom $S \in \mathcal{S}'$, let $\phi(S)$ be the graph with vertex set $\{w_1,w_2,\ldots,w_\ell\}$ and $w_iw_j \in E(\phi(S))$ if there exists $x \in S$ such that $xw_i,xw_j \in E(G)$.
In fact, the edge $w_iw_j \in E(\phi(S))$ comes from contracting the vertex $x$ which connects $w_i$ and $w_j$.
The vertex $x$ is denoted as $B(\phi(S), w_iw_j)$.
See Figure~\ref{fig: phi} for an example.

\begin{figure}[t]
	\centering
	\includegraphics[width=0.5\linewidth]{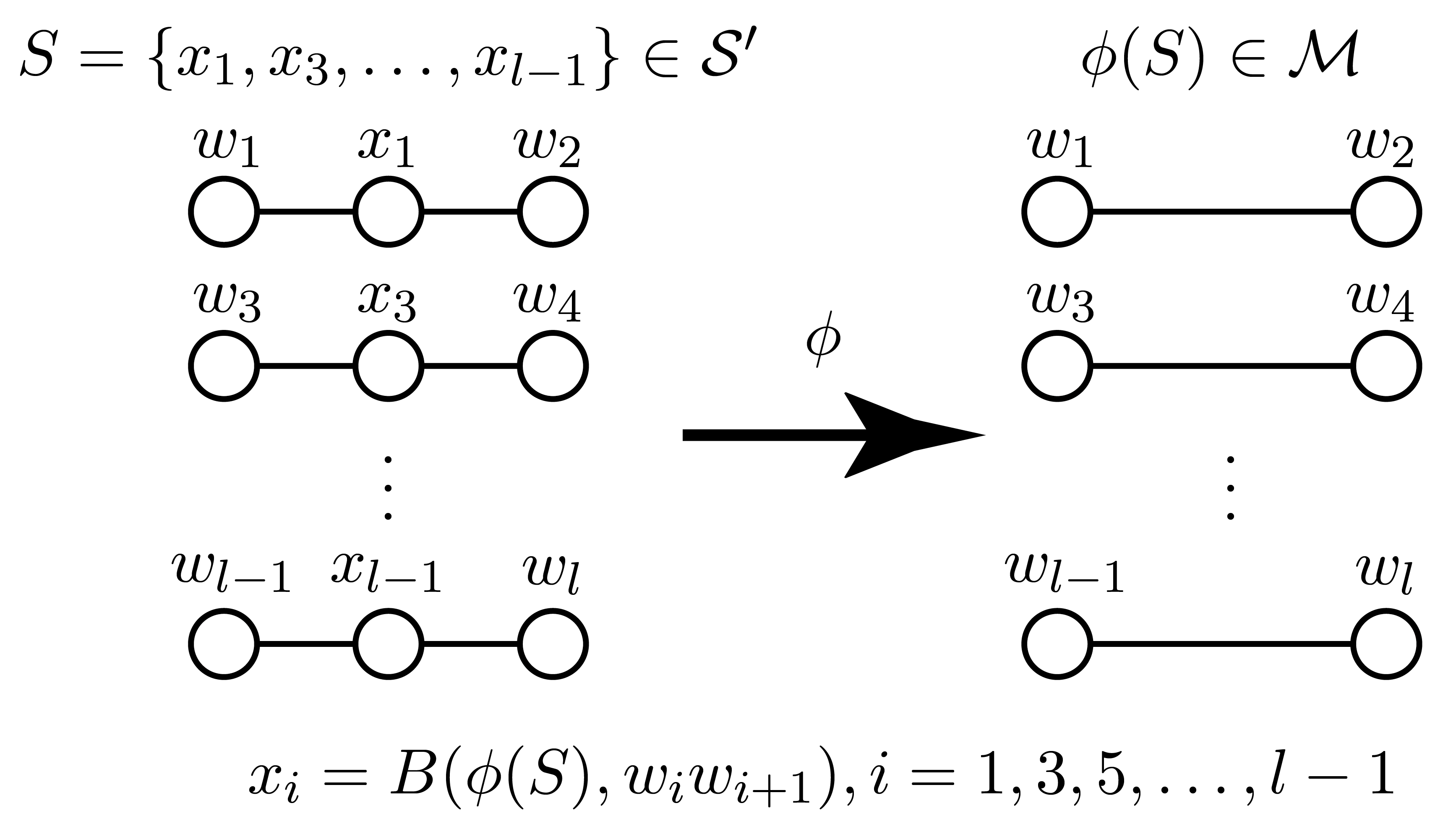}
	\caption{A sample of $\phi$. }\label{fig: phi}
\end{figure}

Note that $|S| = d$ for each $S \in \mathcal{S}'$.
Since $l=2d$ and $W$ is an independent set in $G$, by the definition of $\mathcal{S}'$, $\phi(S)=dK_2$ for any $S \in \mathcal{S}'$, where $dK_2$ means $d$ disjoint edges. Since $\ell = 2d$, $\phi(S)$ is a perfect matching on vertices $\{w_1,w_2,\ldots,w_\ell\}$.
Let $\mathcal{M}$ be the set of perfect matching on vertices $\{w_1,w_2,\ldots,w_l\}$. Then  $\phi(S) \in \mathcal{M}$ for any $ S \in \mathcal{S}'$.

\vspace{.2cm}
\begin{claim}\label{claim: K f(l)+1 free}
    $G'[\mathcal{S}']$ is $K_{f(\ell)+1}$-free.
\end{claim}

\noindent
\textbf{Proof of Claim~\ref{claim: K f(l)+1 free}}: Suppose there are $S_1,\ldots,S_{f(\ell)+1}\in \mathcal{S}'$ such that $G'[S_1,\ldots,S_{f(\ell)+1}]$ is a clique.
Then $G[S_i\cup S_j \cup W]$ is an induced copy of $C_r$ for $i,j\in \{1,\ldots,f(l)+1\}$ and $ i \neq j $.
Let $M_i = \phi(S_i),i=1,\ldots,f(l)+1$.
By the definitions of $G'$ and the graph $\phi$, we have  $M_i \cap  M_j = \emptyset$ for any $i,j\in \{1,\ldots,f(l)+1\}$ and $ i \neq j $.
Let $H$ denote the graph with vertex set $\{w_1,w_2,\ldots,w_\ell\}$ and edge set $\bigcup_{i=1}^{f(\ell)+1}E(M_i)$.
We now claim $\{M_1,\ldots,M_{f(\ell)+1}\}$ is a flawless $1$-factorization of $H$.

First, for each $i$, $M_i$ is a perfect matching.
Second, for each $i \neq j$, $M_i \cup M_j$ induces a Hamiltonian cycle of $H$ since $G[S_i\cup S_j \cup W]$ is an induced cycle.
The third part takes more effort to prove.
If there is a Hamiltonian cycle in $H$, then it corresponds to an induced $C_r$ in $G$.
For any $i_1,i_2 \in \{1,\ldots,f(\ell)+1\}$, let $e_1 \in E(M_{i_1}),e_2 \in E(M_{i_2})$ be two distinct edges from $E(H)$. Then $x_1: = B(M_{i_1}, e_1)$ and $x_2: = B(M_{i_2},e_2)$ are two vertices in $G$. We have that $x_1x_2 \notin E(G)$ due to the following two cases.
\begin{enumerate}
    \item  $i_1=i_2$. Then $x_1,x_2 \in S_{i_1}$. Since $S_{i_1} \cup W$ is contained in some induced $C_r$ in $G$, by the definition of  $\phi$, we have $x_1x_2 \notin E(G)$.
    \item $i_1 \neq i_2$. Since $G[S_{i_1} \cup S_{i_2} \cup W]$ is an induced copy of $C_r$,  by the definition of  $\phi$, we also have $x_1x_2 \notin E(G)$.
\end{enumerate}

Let $C=w_{i_1}w_{i_2}\ldots w_{i_\ell}w_{i_1}$ be a Hamiltonian cycle in $H$. Assume $w_{i_{j}}w_{i_{j+1}} \in E(M_{i_j'})$ for $j=1,2,\ldots,l$~(write $i_{\ell+1}=i_1$). Then
\[
w_{i_1} B(M_{i_1'}, w_{i_1}w_{i_2})w_{i_2}B(M_{i_2'}, w_{i_2}w_{i_3})\ldots B(M_{i_{\ell-1}'}, w_{i_{\ell-1}}w_{i_\ell}) w_{i_\ell} B(M_{i_\ell'}, w_{i_\ell}w_{i_1})w_{i_1}
\]
is an induced $C_r$ in $G[X_1\cup W]$ by the definition of  $\phi$,
 and we denote the corresponding induced cycle copy of $C_r$ in $G$  by $C_r(C)$.
If $C$ is not a union of some $M_i,M_j$, then there exists $M_{j'}$ such that $0 < |E(C) \cap E(M_{j'})| < d$ by $|E(C)| = \ell = 2d$.
By assumption, $|\mathcal{S}'| \ge 3f(\ell)$ and $d_{G'[\mathcal{S}']}(S_{j'}) \ge (1-\frac{1}{f(\ell)})|\mathcal{S}'| \ge 3(f(\ell)-1) \ge f(\ell)+2$. Then there exists $S_{j''}\in \mathcal{S}'$ which is adjacent to $S_{j'}$ in $G'$ and $S_{j''} \notin \{S_1,S_2,\ldots,S_{f(\ell)+1}\}$.
Then  $S_{j'}\cup S_{j''}\cup W$ also induces an induced $C_r$ in $G$ whose intersection with $C_r(C)$ has size $|W| + |E(H) \cap M_{j'}| \notin L=\{\ell, \ell+d\} $, a contradiction with $G$ being $(C_r,L)$-intersecting.

Therefore, $\{M_1,M_2,\ldots,M_{f(\ell)+1}\}$ is a flawless $1$-factorization of $H$ with $\ell$ vertices which contradicts the definition of $f(\ell)$.
\hfill $\blacksquare$\par

By Claim \ref{claim: K f(l)+1 free} and Tur\'an's Theorem~\cite{turan1941extremal}, $|E(G'[\mathcal{S}'])| \le (1-\frac{1}{f(\ell)})\binom{|\mathcal{S}'|}{2}$ which proves our lemma.
\end{proof}

Combining Lemma~\ref{lem: s=2 l=2d} and (\ref{eq: only consider X1}), we have that
\begin{equation*}
\begin{aligned}
    \Psi_r(n,C_r,L) & \le N_{ind}(G[X_1\cup W],C_r) + O(n) \\
    & \le \left(1 - \frac{1}{f(\ell)}+o(1)\right)\frac{|\mathcal{S}|^2}{2} + O(n) \\
    & \le \left(1 - \frac{1}{f(\ell)}+o(1)\right)\frac{n^2}{2d^2},
\end{aligned}
\end{equation*}
which proves the upper bound.

\section*{Acknowledgement}
This work is supported by the National Natural Science Foundation of China (Grant 12571372).
\section*{Declaration of competing interest}
The authors declare that they have no known competing financial interests or personal relationships that could have appeared to influence the work reported in this paper.

\section*{Data availability}
No data was used for the research described in the article.

\bibliography{ref.bib}

\begin{thebibliography}{10}
\expandafter\ifx\csname urlstyle\endcsname\relax
  \providecommand{\doi}[1]{doi:\discretionary{}{}{}#1}\else
  \providecommand{\doi}{doi:\discretionary{}{}{}\begingroup \urlstyle{rm}\Url}\fi

\bibitem{alon2016probabilistic}
N.~Alon and J.~H. Spencer.
\newblock \emph{The probabilistic method}.
\newblock John Wiley \& Sons, 2016.

\bibitem{anderson1973finite}
B.~A. Anderson.
\newblock Finite topologies and hamiltonian paths.
\newblock \emph{Journal of Combinatorial Theory, Series B}, 14(1):87--93, 1973.

\bibitem{balogh2016maximum}
J.~Balogh, P.~Hu, B.~Lidick{\`y}, and F.~Pfender.
\newblock Maximum density of induced 5-cycle is achieved by an iterated blow-up of 5-cycle.
\newblock \emph{European Journal of Combinatorics}, 52:47--58, 2016.

\bibitem{bryant2006new}
D.~Bryant, B.~Maenhaut, and I.~M. Wanless.
\newblock New families of atomic latin squares and perfect 1-factorisations.
\newblock \emph{Journal of Combinatorial Theory, Series A}, 113(4):608--624, 2006.

\bibitem{deza1978intersection}
M.~Deza, P.~Erd{\"o}s, and P.~Frankl.
\newblock Intersection properties of systems of finite sets.
\newblock \emph{Proceedings of the London Mathematical Society}, 3(2):369--384, 1978.

\bibitem{frankl2016invitation}
P.~Frankl and N.~Tokushige.
\newblock Invitation to intersection problems for finite sets.
\newblock \emph{Journal of Combinatorial Theory, Series A}, 144:157--211, 2016.

\bibitem{frankl2016uniform}
P.~Frankl and N.~Tokushige.
\newblock Uniform eventown problems.
\newblock \emph{European Journal of Combinatorics}, 51:280--286, 2016.

\bibitem{furedi1983finite}
Z.~F{\"u}redi.
\newblock On finite set-systems whose every intersection is a kernel of a star.
\newblock \emph{Discrete mathematics}, 47:129--132, 1983.

\bibitem{gowers2021generalizations}
W.~T. Gowers and B.~Janzer.
\newblock Generalizations of the ruzsa--szemer{\'e}di and rainbow tur{\'a}n problems for cliques.
\newblock \emph{Combinatorics, Probability and Computing}, 30(4):591--608, 2021.

\bibitem{hefetz2018inducibility}
D.~Hefetz and M.~Tyomkyn.
\newblock On the inducibility of cycles.
\newblock \emph{Journal of Combinatorial Theory, Series B}, 133:243--258, 2018.

\bibitem{helliar2024generalized}
C.~{Helliar} and X.~{Liu}.
\newblock {A generalized Tur{\'a}n extension of the Deza--Erd{\H{o}}s--Frankl Theorem}.
\newblock \emph{arXiv e-prints}, page arXiv:2404.02762, April 2024.
\newblock \doi{10.48550/arXiv.2404.02762}.

\bibitem{hirst2014inducibility}
J.~Hirst.
\newblock The inducibility of graphs on four vertices.
\newblock \emph{Journal of Graph Theory}, 75(3):231--243, 2014.

\bibitem{kotzig1963hamilton}
A.~Kotzig.
\newblock Hamilton graphs and hamilton circuits.
\newblock In \emph{Theory of Graphs and its Applications, Proceedings of the Symposium of Smolenice}, pages 63--82. 1963.

\bibitem{norin2019bound}
D.~Král', S.~Norin, and J.~Volec.
\newblock A bound on the inducibility of cycles.
\newblock \emph{Journal of Combinatorial Theory, Series A}, 161:359--363, 2019.
\newblock ISSN 0097-3165.

\bibitem{liu2021generalized}
E.~L.~L. {Liu} and J.~{Wang}.
\newblock {The Generalized Tur{\'a}n Problem of Two Intersecting Cliques}.
\newblock \emph{arXiv e-prints}, arXiv:2101.08004, January 2021.

\bibitem{PIPPENGER1975189}
N.~Pippenger and M.~C. Golumbic.
\newblock The inducibility of graphs.
\newblock \emph{Journal of Combinatorial Theory, Series B}, 19(3):189--203, 1975.
\newblock ISSN 0095-8956.

\bibitem{razborov2007flag}
A.~A. Razborov.
\newblock Flag algebras.
\newblock \emph{The Journal of Symbolic Logic}, 72(4):1239--1282, 2007.

\bibitem{rosa2019perfect}
A.~Rosa.
\newblock Perfect 1-factorizations.
\newblock \emph{Mathematica Slovaca}, 69(3):479--496, 2019.

\bibitem{ruzsatriple}
I.~Ruzsa and E.~Szemer{\'e}di.
\newblock Triple systems with no six points carrying three triangles, combinatorics (keszthely, 1976), 18 (1978), vol. ii., 939-945.

\bibitem{sos1976remarks}
V.~S{\'o}s.
\newblock Remarks on the connection of graphs, finite geometry and block designs, colloq. inter. sulle theorie comb.(rome 1973), tomo ii.
\newblock \emph{Accad. Naz. Lincei, Rome}, pages 223--233, 1976.

\bibitem{turan1941extremal}
P.~Turán.
\newblock On an extremal problem in graph theory.
\newblock \emph{Matematikai és Fizikai Lapok}, 48:436--452, 1941.
\newblock Originally in Hungarian.

\bibitem{wallis1997one}
W.~Wallis.
\newblock One-factorizations, volume 390 of mathematics and its applications, 1997.

\bibitem{zhao2025counting}
Y.~{Zhao} and X.~{Zhang}.
\newblock {Counting cliques with prescribed intersection sizes}.
\newblock page arXiv:2503.16229, March 2025.

\end{thebibliography}
\bibliographystyle{wyc3}

\end{document}